\documentclass[oneside]{amsart}

\usepackage[letterpaper,body={15.0cm,22.5cm}, mag=1000]{geometry}
\usepackage[utf8]{inputenc}
\usepackage{amssymb}
\usepackage{amsthm}
\usepackage{amscd}
\usepackage{enumitem}
\usepackage{float}
\usepackage{placeins}
\usepackage{caption}
\usepackage{mathtools}

\usepackage{times}
\usepackage{graphicx}
\usepackage{tikz}
\usepackage{tikz-cd}
\usepackage[all,cmtip]{xy}

\numberwithin{equation}{section}
\theoremstyle{plain}

\newtheorem{cor}[equation]{Corollary}
\newtheorem{lemma}[equation]{Lemma}

\newtheorem{pro}[equation]{Proposition}

\newtheorem{exa}[equation]{Example}
\newtheorem{lem}[equation]{Lemma}

\newtheorem{thm}[equation]{Theorem}

\theoremstyle{definition}

\newtheorem{remark}[equation]{Remark}

\newcommand{\dlabel}[1]{\ifmmode \text{\ttfamily \upshape [#1] } \else
{\ttfamily \upshape [#1] }\fi \label{#1}}

\newcommand{\C}{\operatorname{C} }

\newcommand{\Z}{\operatorname{Z} }

\newcommand{\Cb}{\operatorname{Cb} }
\newcommand{\Pb}{\operatorname{Pb} }

\newcommand{\Id}{\operatorname{Id}}
\renewcommand{\Pr}{\operatorname{Pr} }

\newcommand{\gen}[1]{\left < #1 \right >}
\newcommand{\Aut}{\operatorname{Aut} }

\newcommand{\Ker}{\operatorname{Ker} }

\newcommand{\Soc}{\operatorname{Soc} }
\newcommand{\Ann}{\operatorname{Ann} }

\newcommand{\Fix}{\operatorname{Fix} }

\newcommand{\FCI}{\operatorname{FCI}}
\author{Susanta Mondal}
\address{Harish-Chandra Research Institute, A CI of Homi Bhabha National Institute, Chhatnag Road, Jhunsi, Prayagraj-211 019, India}
\email{susantamondal@hri.res.in}

\author{Manoj K. Yadav}
\address{Harish-Chandra Research Institute, A CI of Homi Bhabha National Institute, Chhatnag Road, Jhunsi, Prayagraj-211 019, India}
\email{myadav@hri.res.in}

\subjclass[2010]{16N99, 16T25, 20F24}
\keywords{Skew left brace,  ideal, commutativiy degree, isoclinism, nilpotency}

\begin{document}
\setcounter{page}{1}
\title[commuting probability of skew  left braces]
{commuting probability of  skew  left  braces}

\maketitle

\begin{abstract}
We introduce a concept of the commuting probability of a skew left brace analogous to group theory.
We establish upper and lower bounds for the commuting probability and prove that, for finite non-trivial skew left braces, it is always  at most $\frac{3}{4}$. Interestingly, there is no  skew left brace with commuting probability in the open  interval $(5/8, 1)$, except  $\frac{3}{4}$, for which we construct an explicit example. A characterization of skew left braces having commuting probability $\frac{3}{4}$ or $\frac{5}{8}$ is presented. We further show that the finite skew left braces  with commuting probability  larger than $\frac{65}{128}$ are necessarily nilpotent. We prove that the commuting probability remains invariant under isoclinism of skew braces.  We introduce a concept of a compact Hausdorff topological skew left brace $B$, where we prove that the set of all elements of $B$ having finite centraliser index in $B$ is a Borel subgroup. For such  infinite non-trivial  skew left braces too $\frac{3}{4}$ is the upper bound for the commuting probability, and $\frac{3}{4}$ is the only rational number which occurs as commuting probability in the open interval $(5/8, 1)$.
\end{abstract}

\section{Introduction}

Commuting probability of groups has long been studied in the literature, starting from its genesis in \cite{ET68} by Erdos and Turan, and its inception in \cite{WHG73} by Gustafson  for groups.  This article aims at introducing an analogous concept for skew left braces and obtaining several fundamental results. A set $B$ equipped with two group structures $(B, +)$ and $(B, \circ)$ is said to be a skew left brace if $a \circ (b + c) = (a \circ b) - a + (a \circ c)$  for all $a, b, c \in B$, where $-a$ is the inverse of $a$ in $(B, +)$. It turns out that the group identities of both the groups $(B, +)$ and $(B, \circ)$ coincide, which we denote by $1$.  If $(B, +)$ is an abelian group, the skew left braces is called a left brace.
The concept of left braces and skew left braces were, respectively, introduced by Rump \cite{Rump07} and Guarnieri and Vendramin \cite{GV17} in connection with the study of set-theoretical solutions of the quantum Yang-Baxter equation, suggested by Drienfeld  \cite{Drinfeld92}.

For a finite skew left brace $(B, +, \circ)$, we define commuting probability, denoted by $\Pb(B)$, by the quantity
$$\Pb(B) := \frac{1}{|B|^2} |\{(x, y)  \in B \times B \mid x + y -x -y = x \circ y \circ x^{-1} \circ y^{-1} = -x + (x \circ y) - y = 1\}|,$$
where $x^{-1}$ denotes the inverse of $x$ in $(B, \circ)$. For infinite skew left brace, we first introduce the concept of compact Hausdorff topological skew left brace, and then the concept of a measure on the closed subsets of the skew left brace. To be more precise, we say that a skew left brace $B$  topological  if there exists a common topology of subsets of $B$ under which both the groups $(B, +)$ and $(B, \circ)$ are topological groups. Let $\mu$ be a probability measure on the group $(B, \circ)$ and 
$$Z:= \{ (x, y)  \in B \times B \mid x + y -x -y = x \circ y \circ x^{-1} \circ y^{-1} = -x + (x \circ y) - y = 1\}.$$
Then the  commuting probability of a compact Hausdorff topological skew left brace $B$, again denoted by $\Pb(B)$, is defined by
$$\Pb(B) := (\mu \times \mu) (Z).$$
See Section 5 for more details.

The article is constituted of four sections after the present one. The second section is on preliminaries and some key observations. It is observed that $\Cb_B(x)$, the brace centraliser of $x \in B$ is a subgroup of $(B, \circ)$.  Also the set $\FCI(B)$ of all elements of $B$ whose centralisers have finite index in $(B, \circ)$ is a subgroup of $(B, \circ)$.  Commuting probability for finite skew braces is introduced in Section 3, where several bounds on it are obtained. We prove that if $\Pb(B)$ is different from  $\frac{3}{4}$ and $1$, then $\Pb(B) \in (0, \frac{5}{8}]$. Further,  $\Pb(B) = \frac{3}{4}$ if and only if  $B/\Ann(B)$ is of order $2$, where $\Ann(B)$ denotes the annihilator of the skew brace $B$. It is proved that if  $\Pb(B) = \frac{5}{8}$, then  both the additive and the multiplicative groups of quotient skew brace $B/\Ann(B)$ are isomorphic to $\mathbb{Z}/2\mathbb{Z} \times \mathbb{Z}/2\mathbb{Z}$. It is also proved that $\Pb(B) \leqslant \Pb(A)$ for any sub-skew brace $A$ of a finite skew brace $B$. Connections between commuting probability and  nilpotency of skew left braces is investigated in Section 4. It is proved that a finite skew brace $B$  with $\Pb(B) \ge \frac{65}{128}$ is nilpotent. In the same section, we also show that commuting probability is invariant under isoclinism of skew left braces. In the concluding section, Section 5, we introduce the concept of commuting probability of compact Hausdorff  topological skew left braces, where, among several interesting results,  we prove that $\FCI(B)$ is a Borel set, and $\FCI(B)$ is open in $B$ if and only if $\Pb(B) > 0$. It is also established that  if $\Pb(B)$ is different from  $\frac{3}{4}$ and $1$, then $\Pb(B) \in [0, \frac{5}{8}]$.

We conclude this section by setting some notations. As mentioned above, the common additive and multiplicative identity of a skew left  brace is denoted by $1$. For a skew left brace $(B, +, \circ)$ and $a, b \in B$, the commutator $[a, b]^+ = a+b-a-b$ in $(B, +)$ is denoted by $\gamma^+(a, b)$ and the commutator $[a, b]^{\circ} = a\circ b \circ a^{-1} \circ b^{-1}$ in $(B, \circ)$ is denoted by $\gamma^{\circ}(a, b)$. Left normed higher commutators in $(B, +)$ and $(B, \circ)$ are defined inductively. The center of a group $G$ is denoted by $\Z(G)$, and for any element $g \in G$, the centraliser of $g$ in $G$ is denoted by $\C_G(g)$.

 
\section{Preliminaries and some key results}

A non-empty set $B$, equipped with two group structures  $(B,+)$ and $(B, \circ)$, is  said to be a {\it skew left brace} if for $a, b, c \in B$ the following compatibility condition (skew left distributivity) holds:
 $$a \circ (b + c) = a \circ b - a + a \circ c,$$
 where $-a$ is the inverse of $a$ in $(B, +)$. A {\it skew right brace} can be defined analogously, that is, demanding  skew right distributivity.
 A skew left brace $(B, +, \circ)$ is said to be a {\it two-sided skew brace} if $(B, +, \circ)$ is also  a skew right brace.   Note that if $(B, +, \circ)$ is a skew left brace with $(B, \circ)$ abelian, then $(B, +, \circ)$ is a two-sided skew brace.  
 
The groups $(B,+)$ and $(B,\circ)$ are, respectively, called  {\it additive group} and {\it multiplicative group} of the skew left brace $(B, +, \circ)$.  For simplicity of notation we denote a skew left brace $(B, +, \circ)$ by $B$. If we want to emphasise the role of  group operations, then we use the original notation.   We will normally drop the use of word `left' from skew left brace, and just say  skew brace, unless it is necessary to specify the property.

Every group $(B, +)$ can viewed as a skew  brace by defining $\circ = +$. Such a skew brace is called \emph{trivial}. A skew brace which is not a trivial skew brace is called a \emph{non-trivial skew brace}.
 Easiest among non-trivial skew braces are skew braces $(B, +, +_{op})$ with $(B, +)$ any non-abelian group and $(B, +_{op})$  the opposite group of $(B, +)$, that is, $a +_{op} b = b + a$ for all $a, b \in B$.

   Let $X$ be a given property of  groups.   A skew brace $(B, +, \circ)$ is said to be {\it $X$-type} if its additive group $(B,+)$ satisfies  the property $X$. If $X$ is the abelian property of groups, then a  skew left brace $(B, +, \circ)$ is said to be  \emph{abelian-type}.  An abelian-type skew  brace is called a \emph{brace} in the literature. For simplicity of notation, `an abelian-type trivial skew brace' will be termed as `a trivial brace'.
Let $\Aut(B, +)$ denote the automorphism group of the group $(B, +)$. For a skew brace  $(B, +, \circ)$ and its element  $a$, define a map $\lambda_a : (B, +) \to (B, +)$  by setting 
 $$\lambda_a(b) = - a + (a \circ b)$$ 
 for all $b \in B$. It is now well known that $\lambda_a$ is an automorphism of $(B, +)$ and the  map $\lambda : (B, \circ) \to \Aut(B, +)$; $a \mapsto \lambda_a$ is a group homomorphism.

  Let $(B,  +, \circ)$ be a skew  brace. A subset $H$ of $B$ is said to be a {\it sub-skew  brace} of the skew   brace $(B, +, \circ)$  if $(H, + , \circ)$ is also a skew  brace.   For a given subset $S$ of $B$, the sub-skew brace generated by $S$ is defined to be the smallest sub-skew brace of $B$ containing $S$.  A sub-skew brace $(I, +, \circ)$ of  $(B, +, \circ)$  is said to be a left ideal of $(B, +, \circ)$ if $\lambda_a(I) \subseteq I$ for all  $a \in B$.  A left ideal $(I, +, \circ)$ of $(B, +, \circ)$ is said to be an ideal of $(B, +, \circ)$ if $(I, +)$ and $(I, \circ)$ are normal subgroups of $(B, +)$ and $(B,, \circ)$ respectively.   An ideal generated by a given subset $S$ of $B$ is defined to be the smallest ideal of $B$ containing $S$. It is easy to see that the  kernel of the homomorphism $\lambda$ is given by 
 $$\Ker(\lambda) = \{a \in B \mid a + b = a \circ b \mbox{ for all } b \in B \}.$$

For two skew braces $A$ and $B$, a map $f : A \to B$ is said to be a {\it brace homomorphism} if $f$ is a group homomorphism from the additive group of $A$ to the additive group of $B$ and also from the multiplicative group $A$ to the multiplicative group of $B$. For any brace homomorphism $f : A \to B$, it turns out that $\Ker(f)$ is an ideal of $A$. A bijective brace homomorphism is called a {\it brace isomorphism}.

Let $(B, +, \circ)$ be a skew brace and $a, b \in B$, define 
$$a \ast b := \lambda_a(b) - b = -a + (a \circ b) - b.$$
 The  {\it socle} of $B$,  denoted by $\Soc(B)$, is defined by 
 $$\Soc(B) := \Ker(\lambda) \cap \Z(B,+).$$
The  {\it annihilator} of $B$, denoted by $\Ann(B)$, is defined by 
 $$\Ann(B) := \Ker(\lambda) \cap Z(B,+) \cap \Z(B, \circ) = \Soc(B) \cap  \Z(B, \circ).$$
 It is not very difficult to see that  $\Ann(B)$  is an ideal of $B$. It  follows from \cite[Lemma 2.5]{GV17} or \cite[Proposition 2.9]{BEP24} that   $\Soc(B)$ is also ideal of $B$. We now define higher annihilators of $B$. For $n \ge 2$, define $\Ann_n(B)$, the $n$th annihilator  of $B$, by
$$\Ann_n(B) / \Ann_{n-1}(B) = \Ann\big(B /  \Ann_{n-1}(B)\big).$$
This is possible because $\Ann_{n-1}(B)$ is an ideal of $B$ \cite[Definition 2.2]{JAV23}.  It turns out that 
$$\Ann_n(B) = \{a \in B \mid a*b, b*a,  [a, b]^{+}  \in \Ann_{n-1}(B) \mbox{ for all } b \in B\}.$$
This is an ascending series of ideals of $B$, which is called the  \textit{annihilator series}. We shall view this series  analogous to the upper central series of a group.

Let $B := (B, +, \circ)$ be a skew  brace and $\gamma_n(B, +)$ and  $\gamma_n(B, \circ)$, respectively, denote the $n$th terms of the lower central series of $(B, +)$ and $(B, \circ)$.  For the left ideals $I$ and $J$ of a skew brace $B$, define 
 $$I*J := \langle x*y \mid x \in I, y \in J\rangle^{+},$$ 
 the subgroup generated by the set $\{x*y \mid x \in I, y \in J\}$ in $(B, +)$.
 Set $B^1 = B$ and, recursively, define 
$$B^n :=  B*B^{n-1} = \gen{a*b \mid a \in B, b\in B^{n-1}}^{+}.$$
 It is well known that $B^2$ is an ideal of $B$. But $B^n$, $n \ge 3$, in general is not an ideal of $B$. However, $B^n$ is a left ideal of $B$.  Further, set $B^{(1)} = B$ and, recursively, define 
$$B^{(n)} :=  B^{(n-1)}*B = \gen{b*a \mid a \in B, b\in B^{(n-1)}}^{+}.$$
 It is proved in \cite[Proposition 2.1]{CSV19} that $B^{(n)}$ is an ideal of $B$.
 
 We consider the lower central series of a skew brace defined in \cite{BJ23}. 
  Set $\Gamma_1(B)  := B$ and define
$$\Gamma_n(B) := \gen{B* \Gamma_{n-1}(B), \Gamma_{n-1}(B) * B, [B, \Gamma_{n-1}(B)]^{+}}^{+},$$
where $[B, \Gamma_{n-1}(B)]^{+}$ denotes the subgroup of $(B, +)$ generated by the set $\{[a, u]^{+} \mid a \in B, u \in \Gamma_{n-1}(B)\}$.
The authors of  \cite{BJ23} start with $0$ index, but we start with $1$. $\Gamma_n(B)$ is an ideal of $B$ and  $\Gamma_{n+1}(B) \le \Gamma_{n}(B)$ for all integers  $n \ge 1$. We shall view this series of ideals of a skew left  brace analogous to the lower central series of a group.

A skew brace $B$ is said to be  {\em nilpotent} if there exists an integer $n$ such that  $\Ann_n(B) = B$. The least such integer is called the  {\em nilpotency class} of $B$.  The authors of \cite{BJ23} call such skew braces centrally nilpotent and prove that   $\Ann_n(B) = B$ if and only if $\Gamma_{n+1}(B) = 1$ (\cite[Theorem 2.8]{BJ23}).

We define the  \emph{brace centralizer} of an element $x \in B$ to be the subset 
\begin{eqnarray*}
\Cb_B(x) &:=&  \{b \in A \mid x * b = b * x = [x, b]^{+} = 1\}\\
&= &\{b \in B \mid x * b = [x, b]^{\circ} = [x, b]^{+} = 1\}.
\end{eqnarray*}
The {\it brace centralizer}  of a subset  $S$ of a skew brace $B := (B, +, \circ)$, denoted as $\Cb_{B}(S)$, is defined to be
$$\Cb_{B}(S) := \cap_{x \in S} \Cb_B(x).$$
 Note that if $x \in \Ann(B)$, then $\Cb_{B}(x) = B$. Also note that $\Cb_{B}(B) = \Ann(B)$. It follows from \cite[Proposition 2.19]{SV18} that if $x \in \Cb_{B}(x)$ for all $x \in B$, then $B$ is a two-sided skew brace. The converse need not be true in general (see Example \ref{3/4} below).

Following \cite{CFT25}, for a skew brace $B:=(B, +, \circ)$ and an element $x \in B$,  we define 
$$\Fix^l_B(x)  = \{b \in B \mid \lambda_b(x) = x\} \mbox{ and } \Fix^r_B(x)  = \{b \in B \mid \lambda_x(b) = b\}.$$
We now define the left and the right centralizers, respectively, of an element $x \in B$ as follows:
$$\Cb^l_B(x) := \Fix^l_B(x) \cap \C_{(B, \circ)}(x) \mbox{ and }  \Cb^r_B(x) := \Fix^r_B(x) \cap \C_{(B, +)}(x).$$
It is easy to see that 
$$\Cb_B(x) = \Cb^l_B(x) \cap \Cb^r_B(x).$$
 A straightforward computation shows that $\Fix^r_B(x) \cap \C_{(B, \circ)}(x)$ is a sub-skew brace of $B$.  Note that both $\C_{(B, \circ)}(x)$ and $\Fix^l_B(x)$ are subgroups of $(B, \circ)$. Thus we get the following key result \cite[Theorem 3.3]{CFT25}.

\begin{pro}\label{pro:subgroup}
   Let $(B, +, \circ)$ be a skew brace.    If $x \notin \Ann(B)$, then $\Cb_{B}(x)$ is a proper subgroup of  $(B, \circ)$.
\end{pro}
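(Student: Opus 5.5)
Two things must be shown: that $\Cb_B(x)$ is a subgroup of $(B,\circ)$, and that it is proper when $x \notin \Ann(B)$. Properness is immediate from the earlier remark $\Cb_B(B) = \Ann(B)$. If $\Cb_B(x) = B$, then every $b \in B$ satisfies $x*b = b*x = [x,b]^+ = 1$. This gives $\lambda_x = \Id$, so $x \in \Ker(\lambda)$; it gives $x \in \Z(B,+)$; and it gives $\lambda_b(x)=x$ for all $b$. With $\lambda_x=\Id$ we have $b\circ x = b+\lambda_b(x) = b+x$ and $x\circ b = x+b$, and since $x$ is additively central these agree, so $x \in \Z(B,\circ)$. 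Hence $x \in \Ann(B)$, a contradiction.

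The real content is closure under $\circ$ and inverses. I will use the decomposition $\Cb_B(x) = \Cb^l_B(x) \cap \Cb^r_B(x)$ noted above, and rewrite it as
$$\Cb_B(x) = \C_{(B,\circ)}(x) \cap \Fix^l_B(x) \cap \big(\Fix^r_B(x)\cap \C_{(B,+)}(x)\big).$$
The first two sets are subgroups of $(B,\circ)$: the centralizer trivially, and $\Fix^l_B(x)$ because $\lambda$ is a homomorphism $(B,\circ)\to\Aut(B,+)$, so it is a stabilizer. It therefore suffices to show that $\C_{(B,\circ)}(x) \cap \Fix^l_B(x)\cap \Fix^r_B(x)\cap\C_{(B,+)}(x)$ is closed under $\circ$ and $\circ$-inverses. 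The paper remarks that $S := \Fix^r_B(x)\cap \C_{(B,\circ)}(x)$ is a sub-skew brace, so within the intersection we may work inside $S$, which is already a $\circ$-subgroup. The only remaining condition to transport is $\C_{(B,+)}(x)$.

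Take $a,b \in \Cb_B(x)$. Then $a,b\in\Fix^r_B(x)$, so $\lambda_x(a)=a$ and $\lambda_x(b) = b$. Also $a\circ b \in \Fix^r_B(x)$, since $\Fix^r_B(x)$, the fixed points of the additive automorphism $\lambda_x$, is an additive subgroup, and it is multiplicatively closed on $S$. I then need $[x, a\circ b]^+ = 1$. Write $a\circ b = a + \lambda_a(b)$, so it suffices that $\lambda_a(b)$ commutes additively with $x$. Applying the automorphism $\lambda_a$ to $x+b=b+x$ gives $\lambda_a(x)+\lambda_a(b) = \lambda_a(b)+\lambda_a(x)$, and $\lambda_a(x)=x$ because $a\in\Fix^l_B(x)$. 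So $\lambda_a(b)$ commutes with $x$, and hence so does $a+\lambda_a(b)$.

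Inverses work the same way. We have $a^{-1} = \lambda_{a^{-1}}(-a) = -\lambda_a^{-1}(a)$, and $\lambda_a^{-1} = \lambda_{a^{-1}}$ fixes $x$ because $a\in\Fix^l_B(x)$. Applying $\lambda_a^{-1}$ to the relation $x+a=a+x$ shows that $\lambda_a^{-1}(a)$ commutes with $x$, hence so does its additive inverse $a^{-1}$. The identity $1$ clearly lies in $\Cb_B(x)$. The main obstacle is exactly this step: left-fixing of $x$ is what makes the additive-centralizer condition stable under $\circ$. Everything else is bookkeeping with the homomorphism property of $\lambda$, together with checking that $\Fix^r_B(x)$ is closed under $\circ$ on elements commuting multiplicatively with $x$, which I take from the sub-skew-brace remark.
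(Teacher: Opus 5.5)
Your proof follows the paper's route. You use the same decomposition $\Cb_B(x)=\C_{(B,\circ)}(x)\cap\Fix^l_B(x)\cap\Fix^r_B(x)\cap\C_{(B,+)}(x)$, treat the first two factors as subgroups of $(B,\circ)$, and delegate the $\Fix^r$ factor to the remark that $S=\Fix^r_B(x)\cap\C_{(B,\circ)}(x)$ is a sub-skew brace. Your properness argument is correct. So is your treatment of the additive centraliser, which is the substantive step the paper's sketch never addresses: $\C_{(B,+)}(x)$ is not a subgroup of $(B,\circ)$ in general, and pushing $x+b=b+x$ through $\lambda_a$ using $\lambda_a(x)=x$ is exactly what is needed.

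The one thing to fix is the step you outsource. The remark that $S$ is a sub-skew brace is false in general. If $(B,+)$ is abelian then $S=\Cb_B(x)$, and the paper's own example [8,18], whose additive group is $\mathbb{Z}/2\mathbb{Z}\times\mathbb{Z}/4\mathbb{Z}$, has $\Cb_B((0,3))=\{(0,0),(1,1)\}$, which is not closed under $+$.

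What you actually use is only that $S$ is closed under $\circ$ and $\circ$-inverses. That is true, but you should prove it rather than cite it. For $a,b\in S$:
$\lambda_x(a\circ b)=\lambda_x(a)+\lambda_{x\circ a}(b)=a+\lambda_{a\circ x}(b)=a+\lambda_a(\lambda_x(b))=a\circ b$.
Likewise $\lambda_x(a^{-1})=-\lambda_{x\circ a^{-1}}(a)=-\lambda_{a^{-1}\circ x}(a)=-\lambda_{a^{-1}}(a)=a^{-1}$.

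A simpler fix is to drop the $\Fix^r$ factor, which is redundant. If $\lambda_b(x)=x$ and $x+b=b+x$, then $x\circ b=b\circ x$ reads $x+\lambda_x(b)=b+x=x+b$, so $\lambda_x(b)=b$. Hence $\Cb_B(x)=\C_{(B,\circ)}(x)\cap\bigl(\Fix^l_B(x)\cap\C_{(B,+)}(x)\bigr)$. Your own computation for products and inverses uses only membership in $\Fix^l_B(x)$ and $\C_{(B,+)}(x)$, so it already shows the bracketed set is a subgroup of $(B,\circ)$. With either fix the proof is complete and self-contained.
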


As remarked in \cite[page 175]{CFT25},  $\Cb_{B}(x)$ is a sub-skew brace of a two sided skew brace $B$. But in general  $\Cb_{B}(x)$ may not be a subgroup $(B, +)$ for each $x \in B$. A GAP search tells that skew braces [8, 18], [12, 15] and several for order 16, including [16, 62-63], [16, 73-76] admit elements $x$ such that $\Cb_{B}(x)$ is not a subgroup $(B, +)$. The case [8,18] is worked out in \cite[Example 3.6]{CFT25}. This skew brace  $B$ is such that its additive group is $\mathbb{Z}/2\mathbb{Z} \times \mathbb{Z}/4\mathbb{Z}$ and its multiplicative group is $D_8$, the dihedral group of order 8. It follows that $\Cb_B(0,3) = \{(0, 0), (1, 1)\}$, which is a subgroup of $(B, \circ)$, but not of $(B, +)$ as $(1, 1) + (1, 1) = (0, 2) \not\in \Cb_B(0,3)$. This example also shows that the element $(0, 3)$ does not lie in its brace centralizer. This too reveals that  $- (1, 1) = (-1, -1) = (1, 3) \not\in \Cb_B(0,3)$. At this point skew brace theory differs from group theory.

A skew left brace $(B, +, \circ)$ is said to be \emph{symmetric} if $(B, \circ, +)$ is also a skew left brace. This notion was introduced in \cite{LC19}, and was further studied in \cite{BNY23, AC20}.  Let $(B, +, \circ)$ be a skew brace. Note that the map $(B, \circ) \to \Aut(B, +)$, $b \mapsto \lambda_b$ is also a map from $(B, +)$ to $\Aut(B, +)$.  We say that the skew brace $(B, +, \circ)$ is \emph{$\lambda$-homomorphic} if this map is also a group homomorphism from $(B, +)$ to $\Aut(B, +)$.  This concept was introduced in \cite{CCD20}, and was further explored in \cite{BNY22}. We now observe

\begin{pro}
    Let $(B, +, \circ)$ be either a symmetric skew brace or a $\lambda$-homomorphic skew brace. Then $\Cb_{B}(x)$ is a sub-skew brace of $B$.
\end{pro}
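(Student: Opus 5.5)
Since $\Cb_B(x)$ is already known to be a subgroup of $(B,\circ)$ (Proposition \ref{pro:subgroup}, and trivially when $x \in \Ann(B)$), the plan is to prove that it is also a subgroup of $(B,+)$. I will use the description
$$\Cb_B(x)=\{b\in B \mid \lambda_x(b)=b,\ x\circ b=b\circ x,\ x+b=b+x\}.$$

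\textbf{Step 1: a reformulation.} For $b\in\Cb_B(x)$ I will show that $\lambda_b(x)=x$, i.e. $b\circ x=b+x$. Indeed,
$$b+\lambda_b(x)=b\circ x=x\circ b=x+\lambda_x(b)=x+b=b+x,$$
so $\lambda_b(x)=x$. Consequently, for such $b$ all four expressions $x\circ b$, $b\circ x$, $x+b$, $b+x$ coincide. Conversely, for an arbitrary $d$ with $x+d=d+x$ and $\lambda_x(d)=d$, membership $d\in\Cb_B(x)$ is equivalent to $d\circ x=d+x$.

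\textbf{Step 2: closure under addition.} Take $b,c\in\Cb_B(x)$. Additive commutation of $b+c$ with $x$ is clear, and $\lambda_x(b+c)=\lambda_x(b)+\lambda_x(c)=b+c$ since $\lambda_x\in\Aut(B,+)$. By Step 1 it then suffices to show $(b+c)\circ x=b+c+x$, and I split into the two hypotheses.
\begin{itemize}
\item \emph{$\lambda$-homomorphic case.} Here $\lambda_{b+c}=\lambda_b\lambda_c$, so $\lambda_{b+c}(x)=x$ by Step 1. Hence $(b+c)\circ x=b+c+\lambda_{b+c}(x)=b+c+x$.
\item \emph{Symmetric case.} Use the left brace identity of $(B,\circ,+)$,
$$a+(c\circ y)=(a+c)\circ a^{-1}\circ(a+y).$$
Take $a=b$ and $y=x$. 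Since $b+x=b\circ x$, we get $b^{-1}\circ(b+x)=x$. The identity then gives $(b+c)\circ x=b+(c\circ x)=b+c+x$.
\end{itemize}

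\textbf{Step 3: closure under additive inverses.} Take $b\in\Cb_B(x)$. Again $-b$ commutes additively with $x$ and $\lambda_x(-b)=-b$, so by Step 1 we need $(-b)\circ x=-b+x$.
\begin{itemize}
\item \emph{$\lambda$-homomorphic case.} Here $\lambda_{-b}=\lambda_b^{-1}$, which fixes $x$, and the claim follows.
\item \emph{Symmetric case.} Apply the identity with $a=-b$, $c=b$, $y=x$. Since $-b+b=1$, it yields
$$(-b)^{-1}\circ(-b+x)=-b+(b\circ x)=-b+b+x=x,$$
that is, $-b+x=(-b)\circ x$.
\end{itemize}
Since $1\in\Cb_B(x)$, the set $\Cb_B(x)$ is a subgroup of both $(B,+)$ and $(B,\circ)$. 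As the brace axiom is inherited from $B$, it is a sub-skew brace.

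The only delicate point is choosing the right instance of the symmetric identity in the symmetric case. The key is to arrange that $a^{-1}\circ(a+y)$ collapses to $x$, which is exactly what Step 1 provides.
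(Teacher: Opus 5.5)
Your proof is correct and follows essentially the paper's route. Both arguments reduce the claim to showing that the condition $\lambda_b(x)=x$ (membership in $\Fix^l_B(x)$) survives addition and additive inversion, since the other defining conditions already cut out additive subgroups, and your $\lambda$-homomorphic case is identical to the paper's. In the symmetric case you check this directly from the axiom of $(B,\circ,+)$; that computation is just the case $y=x$ of the one-line derivation of $\lambda_{a+b}=\lambda_b\lambda_a=\lambda_{b\circ a}$, which the paper instead cites from \cite[Proposition 3.6]{BNY23}.
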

\begin{proof}
    Let $a, b \in \Cb_{B}(x)$ be two arbitrary elements.   Since $\Fix^r_B(x)$ and $\C_{(B, +)}(x)$ are subgroups of $(B, +)$ for all $x \in B$, it suffices  to prove that  $\Fix^l_B(x)$ is a subgroup of $(B, +)$, that is,  $\lambda_{a + b} (x) = x$ and $\lambda_{(-a)}(x)=x$. For a symmetric skew brace, by the given hypothesis and \cite[Proposition 3.6.]{BNY23}, we get
    $$
        \lambda_{a + b} (x) = \lambda_{b \circ a} (x)=\lambda_b(\lambda_ a(x)) = x
    $$
    and
    $$
       x=\lambda_{b -b} (x)=\lambda_{(-b)\circ b} (x)=\lambda_{(-b)}(\lambda_ b (x))=\lambda_{(-b)}(x).
    $$   
  Also, for  a $\lambda$-homomorphic skew brace, by its definition, we get 
  $$
        \lambda_{a + b} (x) = \lambda_{a \circ b} (x)=\lambda_a(\lambda_ b(x)) = x
    $$
    and
    $$
        x=\lambda_{-b  + b} (x)=\lambda_{(-b)\circ b} (x)=\lambda_{-b}(\lambda_ b (x))=\lambda_{-b}(x).
    $$   
The proof is complete.
\end{proof}

We say that $x$ has \emph{finite centraliser index in $B$}  if  $[(B, \circ) : \Cb_B(x)]$, the  index  of  $\Cb_B(x)$ in $(B, \circ)$ is finite.  Let $\FCI(B)$ denote the set of all elements of  $B$ having finite centraliser index in $B$. The skew brace $B$ is said to be \emph{FCI} if $B = \FCI(B)$. Note that if $[B: \Ann(B)]$ is finite, then $B$ is FCI. So if $B$ is not FCI, then $[B:\Ann(B)]$ is not finite. The skew braces with property (S) were introduced in \cite[Definition 3.7]{CFT25}. An element $x \in B$ is said to be an \emph{$s$-element} if both the indices $[(B, +) : \Cb^r_B(x)]$ and $[(B, \circ) : \Cb^l_B(x)]$ are finite.
A skew brace $B$ is said to have \emph{property (S)} if  each $x \in B$ is an $s$-element. We remark that each $s$-element of $B$ lies in $\FCI(B)$.  However, the two concepts, namely skew brace with property (S) and FCI brace, may not be the same in general. We further remark that, if either  $\Cb^r_B(x)$ or $\Cb_B(x)$  is a left ideal of $B$ for all $x \in B$, then both theses concepts coincide. 

We now prove
\begin{pro}
   For a skew brace $(B,+, \circ)$,  $\FCI(B)$ is a subgroup of $(B,\circ)$. Moreover, if $B$ is two sided, symmetric or $\lambda$-homomorphic, then $\FCI(B)$ is a sub-skew brace of $B$.
\end{pro}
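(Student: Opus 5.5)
The approach mirrors the classical FC-group argument, with Proposition \ref{pro:subgroup} supplying the needed subgroup structure. The key ingredient is an inclusion of brace centralisers under the multiplicative operation. For $x, y \in B$ I will show
$$\Cb_B(x) \cap \Cb_B(y) \subseteq \Cb_B(x \circ y) \quad \text{and} \quad \Cb_B(x) = \Cb_B(x^{-1}).$$
Granting these, I take $x, y \in \FCI(B)$. Since $\Cb_B(x)$ and $\Cb_B(y)$ are subgroups of $(B,\circ)$ of finite index (by Proposition \ref{pro:subgroup}, or trivially when $x \in \Ann(B)$), their intersection also has finite index, by Poincaré's bound $[G:H\cap K]\le [G:H][G:K]$. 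Hence $\Cb_B(x\circ y)$ has finite index. Likewise $x^{-1} \in \FCI(B)$, and $1 \in \FCI(B)$ because $\Cb_B(1)=B$. So $\FCI(B)$ is a subgroup of $(B,\circ)$.

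To verify the inclusion, I would rewrite membership $b \in \Cb_B(x)$ symmetrically, as $b \in \Cb^l_B(x) \cap \Cb^r_B(x)$. This means that $b$ commutes with $x$ in $(B,\circ)$, that $\lambda_b(x)=x$, that $\lambda_x(b)=b$, and that $b$ commutes with $x$ in $(B,+)$. These conditions together are equivalent to $x\circ b=b\circ x$, $x \circ b = x+b$ and $b\circ x = b+x$.

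Now take $b \in \Cb_B(x)\cap\Cb_B(y)$.
\begin{itemize}
\item The condition $\lambda_{x\circ y}(b)=\lambda_x\lambda_y(b)=b$ is immediate.
\item The condition $[x\circ y, b]^{\circ}=1$ is immediate.
\item For $\lambda_b(x\circ y)$, I expand $x\circ y=x+\lambda_x(y)$ to get $\lambda_b(x\circ y)=\lambda_b(x)+\lambda_b(\lambda_x(y))$. Using $\lambda_b\lambda_x=\lambda_{b\circ x}=\lambda_{x\circ b}=\lambda_x\lambda_b$, this equals $x+\lambda_x(y)=x\circ y$.
\item For additive commutation, $b$ commutes additively with $x$, and also with $\lambda_x(y)$, because $\lambda_x$ fixes $b$ and $b$ commutes additively with $y$. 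Hence $b$ commutes additively with $x+\lambda_x(y)=x\circ y$.
\end{itemize}

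The inverse case is handled the same way, writing $x^{-1}=\lambda_{x^{-1}}(-x)$ and using that $\lambda_{x^{-1}}$ fixes $b$ and commutes with $\lambda_b$. The main obstacle is this bookkeeping, in particular showing that the mixed conditions $\lambda_b(\cdot)$ and additive commutation pass to products and inverses. The device that handles it is the commutation $\lambda_b\lambda_x=\lambda_x\lambda_b$, which comes from $b\circ x=x\circ b$.

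For the second part, I need closure of $\FCI(B)$ under $+$ and additive inverses. Since $x+y=x\circ\lambda_x^{-1}(y)=x\circ \lambda_{x^{-1}}(y)$ and $-x = x\circ\lambda_{x^{-1}}(\ldots)$-type identities hold, it suffices to show that $\FCI(B)$ is invariant under every $\lambda_a$; closure under $+$ and $-$ then follows from the multiplicative closure already proved. Invariance would follow from $\Cb_B(\lambda_a(x))\supseteq$ a conjugate-type image of $\Cb_B(x)$ of the same index.

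This is where the hypotheses enter:
\begin{itemize}
\item In the two-sided case, \cite{SV18}-type results make $\lambda_a$ interact well with $\circ$-conjugation, and $\FCI(B)$ is a $\circ$-normal subgroup, being invariant under the inner automorphisms of $(B,\circ)$ because $\Cb_B(g\circ x\circ g^{-1})$ relates to $g\circ\Cb_B(x)\circ g^{-1}$.
\item In the symmetric and $\lambda$-homomorphic cases, I would instead use $a+b=b\circ a$ (via \cite[Proposition 3.6]{BNY23}), respectively $a+b=a\circ b$ on $\lambda$-values, as in the proof of the preceding proposition. This lets additive sums and inverses be expressed through $\circ$-products, so that sums and inverses of centralising elements again centralise, and the index argument transfers.
\end{itemize}
I expect this second part, especially the two-sided case, to be the most delicate. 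I would first try to prove there that $\Cb_B(x)\cap\Cb_B(y)\subseteq\Cb_B(x+y)$ directly.
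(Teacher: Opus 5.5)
Your argument for the first assertion is correct. The four checks for $x\circ y$ go through, as does the inverse case via $x^{-1}=\lambda_{x^{-1}}(-x)$, and Poincaré's bound finishes it. The paper gets the same inclusion more cheaply. Since $b\in\Cb_B(x)$ if and only if $x\in\Cb_B(b)$, an element $b\in\Cb_B(x)\cap\Cb_B(y)$ satisfies $x,y\in\Cb_B(b)$. Proposition~\ref{pro:subgroup} applied to $\Cb_B(b)$ then gives $x\circ y,\,x^{-1}\in\Cb_B(b)$. Your hand computation is in effect a reproof of that proposition.

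The second assertion, however, is not proved.
\begin{itemize}
\item Reducing additive closure to $\lambda_a$-invariance of $\FCI(B)$ asks for more than is needed, since it would make $\FCI(B)$ a left ideal, and you never establish it.
\item In the two-sided bullet, $\circ$-normality of $\FCI(B)$ gives no information about sums.
\item The symmetric/$\lambda$-homomorphic bullet only describes what one hopes will happen.
\end{itemize}

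The missing idea is to reuse the symmetry trick with $+$ in place of $\circ$. In each of the three cases, $\Cb_B(b)$ is closed under $+$ and additive inverses.
\begin{itemize}
\item For symmetric or $\lambda$-homomorphic $B$, this is exactly the proposition proved just before this one, using $\lambda_{x+y}=\lambda_{y\circ x}$, respectively $\lambda_{x+y}=\lambda_{x\circ y}$.
\item For two-sided $B$, it follows from right distributivity. If $x,y\in\Cb_B(b)$, then $(x+y)\circ b=x\circ b-b+y\circ b=x+y+b$. Also $b=(x+(-x))\circ b=x+(-x)\circ b$ gives $(-x)\circ b=-x+b$. The remaining conditions are automatic: $\lambda_b$ fixes $x+y$ and $-x$, and both commute additively with $b$.
\end{itemize}
Hence $\Cb_B(x)\cap\Cb_B(y)\subseteq\Cb_B(x+y)$ and $\Cb_B(-x)=\Cb_B(x)$. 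All of these are subgroups of $(B,\circ)$, so Poincaré's bound in $(B,\circ)$ gives $x+y,\,-x\in\FCI(B)$. This is presumably the ``routine calculation'' the paper leaves to the reader, and no $\lambda$-invariance of $\FCI(B)$ is required.
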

\begin{proof}
 For $a, x_1, x_2 \in B$ such that  $a \in \Cb_B(x_1)\cap\Cb_B(x_2)$, notice that $x_1, x_2 \in \Cb_B(a)$. Thus, by Proposition  \ref{pro:subgroup}, $x_1 \circ x_2 \in  \Cb_B(a)$, which gives   $\Cb_B(x_1) \cap \Cb_B(x_2) \subset \Cb_B(x_1\circ x_2)$. Now, assuming $x_1, x_2 \in \FCI(B)$, we see that  $[(B,\circ) : \Cb_B(x_1\circ x_2)] \leqslant [(B,\circ) : \Cb_B(x_1)][(B,\circ):\Cb_B(x_2)]<\infty$, and therefore  $x_1 \circ x_2 \in \FCI(B)$. Similarly, it is easy to see that  $\Cb_B(x) = \Cb_B(x^{-1})$ for all $x \in \FCI(B)$.  Other assertions can be proved by a routine calculation. The proof is complete.
\end{proof}

For a two-sided brace we can prove the following result, which is analogous to \cite[Proposition 3.10]{CFT25}.  
 \begin{pro}
  Let $(B,+, \circ)$ be a two-sided brace. Then $\FCI(B)$ is an ideal of $B$.
\end{pro}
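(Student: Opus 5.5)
The plan is to pass to the radical ring attached to the two-sided brace and work with annihilators, where every condition becomes additive. By Rump's correspondence, a two-sided brace $B$ (with $(B,+)$ abelian) is the same thing as a radical ring $(B,+,\cdot)$, with $a\circ b=a+b+ab$. In this language $\lambda_a(b)=b+ab$, hence $a*b=ab$. Since $(B,+)$ is abelian, $[x,b]^{+}=1$ holds automatically, so
$$\Cb_B(x)=\{b\in B \mid xb=0=bx\}.$$
This is an additive subgroup, being an intersection of kernels of additive maps. By the preceding proposition it is also a sub-skew brace, since the two-sided case is covered there. Likewise $\FCI(B)$ is a sub-skew brace, and in particular a subgroup of $(B,+)$; that subgroup is normal because $(B,+)$ is abelian.

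First I will check that the index of $\Cb_B(x)$ in $(B,\circ)$ equals its index in $(B,+)$. For a sub-skew brace $H$ and $b\in B$ we have $b\circ H=b+\lambda_b(H)$. Two-sidedness gives $\lambda_b(H)=H$ for $H=\Cb_B(x)$: indeed, if $xh=hx=0$ then $x(h+bh)=x h+ (xb)h$, and $(xb)h=x(bh)$, so the condition must be verified through the explicit description. If this turns out messy, I will instead simply define everything additively: the index in $(B,+)$ is what we control, and I will show the two indices agree via $|b\circ H|$-counting of cosets. Thus finite centraliser index can be read off additively.

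To prove that $\FCI(B)$ is an ideal, I will show it is a two-sided ring ideal: $x\in\FCI(B)$ and $a\in B$ imply $ax,\,xa\in\FCI(B)$. Then $\lambda_a(x)=x+ax\in\FCI(B)$, which gives the left ideal property. Moreover
$$a\circ x\circ a^{-1}=x+ax+xa^{-1}+axa^{-1}$$
is a combination of such products, which gives normality in $(B,\circ)$.

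For $xa$, I take $b\in \Cb_B(x)$ with $ab\in\Ann_r(x):=\{c\mid xc=0\}$. Then $(xa)b=x(ab)=0$ and $b(xa)=(bx)a=0$, so $b\in\Cb_B(xa)$. Now $\Ann_r(x)\supseteq\Cb_B(x)$ has finite additive index. The map $b\mapsto ab$ is additive, so the preimage of $\Ann_r(x)$ under it has finite index. Intersecting this preimage with $\Cb_B(x)$ gives a finite-index subgroup of $\Cb_B(xa)$. The case of $ax$ is symmetric, using $b\mapsto ba$ and the left annihilator.

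The main obstacle is the bookkeeping between indices in $(B,\circ)$ and in $(B,+)$. I expect this to follow from the fact that left $\circ$-cosets of a left ideal coincide with additive cosets. It may require showing that $\Cb_B(x)$ is invariant under every $\lambda_b$, or comparing with a smaller finite-index two-sided ring ideal contained in it.
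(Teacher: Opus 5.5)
\textbf{Verdict: there is a genuine gap, at the step you flag yourself.}

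\textbf{What works.} Your reduction to the radical ring is sound. You have $\Cb_B(y)=\Ann_r(y)\cap\Ann_l(y)$, where $\Ann_l(y):=\{c\mid cy=0\}$. Showing that $\FCI(B)$ is closed under addition and under $c\mapsto ac,\ ca$ does make it an ideal. Your preimage argument correctly shows that if $\Cb_B(x)$ has finite index in $(B,+)$, then so do $\Cb_B(xa)$ and $\Cb_B(ax)$.

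\textbf{The gap.} $\FCI(B)$ is defined through the index in $(B,\circ)$. You therefore need, in both directions, that $[(B,\circ):\Cb_B(y)]<\infty$ if and only if $[(B,+):\Cb_B(y)]<\infty$. You need it once to start from $x$ and once to conclude for $xa$ and $ax$. Neither of your suggested routes gives this.
\begin{enumerate}
\item $\Cb_B(x)$ is not $\lambda$-invariant in general, because $x\lambda_b(h)=(xb)h$ need not vanish. In the radical ring of strictly upper triangular $4\times 4$ matrices over $\mathbb{F}_2$, take $x=E_{12}$, $h=E_{34}\in\Cb_B(x)$ and $b=E_{23}$. Then $x\lambda_b(h)=E_{14}\neq 0$.
\item Counting cosets via $|b\circ H|$ only says something for finite $B$. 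But for finite $B$ one trivially has $\FCI(B)=B$, so the proposition is empty there. Its whole content is the infinite case, where such cardinality comparisons say nothing.
\end{enumerate}

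\textbf{The missing idea.} Compare indices for the two one-sided annihilators rather than for $\Cb_B(y)$ itself.
\begin{enumerate}
\item $\Ann_l(y)$ is a left ring ideal, hence invariant under $\lambda_b(c)=c+bc$. So its left $\circ$-cosets are additive cosets: $b\circ\Ann_l(y)=b+\Ann_l(y)$.
\item $\Ann_r(y)$ is a right ring ideal, and $c\circ b=b+(c+cb)$. The map $c\mapsto c+cb$ is a bijection of $\Ann_r(y)$, with inverse $c\mapsto c+cb'$ where $b'$ is the $\circ$-inverse of $b$. So its right $\circ$-cosets are additive cosets: $\Ann_r(y)\circ b=b+\Ann_r(y)$.
\end{enumerate}
Both annihilators are subgroups of $(B,+)$ and of $(B,\circ)$, so each has the same index in the two groups. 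Since $\Cb_B(y)$ is their intersection, finiteness of its index in $(B,\circ)$ and finiteness of its index in $(B,+)$ are each equivalent to both annihilators having finite additive index.

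With this lemma your argument closes, and can be stated directly. $\Ann_l(xa)\supseteq\Ann_l(x)$, and $\Ann_r(xa)$ is the preimage of $\Ann_r(x)$ under the additive map $c\mapsto ac$. The case of $ax$ is symmetric.

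\textbf{Comparison with the paper.} The paper takes a different route. It uses associativity of $*$, finiteness of the sets $\{y*c\}$ and $\{c*y\}$ and of the $\circ$-conjugacy class, and the fact that $\circ$-conjugation is a brace automorphism in the two-sided case. That argument also leaves an additive-versus-multiplicative index translation implicit. Once completed by the lemma above, your ring-theoretic version is the more explicit of the two.
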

\begin{proof}
    Set $F:= \FCI(B)$ and  $c \in F$.  Since $B$ is a two-sided brace, each inner automorphism of $(B,\circ)$ is a brace automorphism, and therefore,  $x \circ c \circ x^{-1} \in F$ for all $x \in B$. Thus $F$ is normal in both $(B, +)$ and $(B, \circ)$.  As $\lambda_b(c)=b\ast c+c$ for all $b \in B$ and $F$ is a subgroup of $(B,+)$, it suffices to prove $b\ast c \in F$.  As $B$ is a two sided brace, `$\ast$' is associative on $B$. Thus it follows that the sets $\{x \ast( b\ast c) \mid x \in B\}$ and $\{(b \ast c) \ast x \mid x \in B\}$ are finite. Note that $x\circ(b\ast c)\circ x^{-1} = (x\circ b\circ x^{-1})\ast ( x\circ c\circ x^{-1})$ for all $x \in B$. Since $x\circ c\circ x^{-1}\in F$, $\{x\circ(b\ast c)\circ x^{-1} \mid x\in B\}$ is finite. This completes the proof.
 \end{proof}


\section{commuting probability of finite skew braces}

We start this section by defining  commuting probability of a finite skew  brace. For  a finite skew  brace  $B:= (B, +, \circ)$, we define the  \emph{commuting probability} of $B$, denoted by $\Pb(B)$, to be 
$$\Pb(B)  =  \frac{1}{ |B|^{2}} \big(|\{(a, b) \mid a * b = b* a = \gamma^+(a, b) = 1\}|\big).$$
It is clear that $\Pb(B) = 1$ for all  trivial braces $B$. For any other brace $B$,  $\Pb(B)$ lies in the open interval $(0, 1)$.
 Note that, analogous to group theory, we can derive $\Pb(B)$ in terms of brace centralizers as follows:
$$\Pb(B) = \frac{\sum_{x\in B}|\Cb_{B}(x)|}{|B|^2},$$
which can be further expanded to
\begin{eqnarray}\label{exp-Pb}
\Pb(B) &=& \frac{1}{|B|^2} \Big(\sum_{x\in \Ann(B)}|\Cb_{B}(x)|+\sum_{x\in B-Ann(B)}|\Cb_{B}(x)|\Big) \nonumber \\
&=& \frac{|\Ann(B)|}{|B|} + \frac{1}{|B|^2} \sum_{x\in B-\Ann(B)}|\Cb_{B}(x)|.
\end{eqnarray}

One baby observation is that if  $\Pb(B)  \ne  1$, then $B$ must admit a non-trivial proper sub-skew brace, which follows from the fact that a skew brace having no  non-trivial proper sub-skew brace is always a trivial brace of prime  order (\cite[Theorem A]{BEJP24}). The converse, of course, is not true as $\Pb(B) = 1$ for all trivial  braces. As a direct consequence of the definition, one can easily see that
$$\Pb(B) \leqslant \min\{\Pr(B, +), \Pr(B, \circ)\},$$
where $\Pr(G)$ denotes the commuting probability of a given group $G$.

 We remark that several proofs in this section can also be obtained as  special cases of proofs from Section 5. We have chosen to present these here for completeness.   As we observed above, $\Pb(B)$ lies in the interval $(0, 1]$. But we can say better on these bounds.
\begin{pro}
   For any finite skew brace $B$ with $|B| > 1$, and proper sub-skew brace $H$, the following assertions hold:
   
\text{(1)} $\frac{2}{|B|} \leqslant \Pb(B)$.   \label{pro: card B and comm B}

\text{(2)} $\Pb(B)   < 2\frac{|\Cb_{B}(x)|}{|B|}$ for some $x \in B - \Ann(B)$.

\text(3) $\frac{\Pb(H)}{{[B:H]}^2} < \Pb(B)$. \label{pro:sub skew and commutative}

\noindent Moreover, if $H$ is a trivial brace, then 
  $\frac{1}{{[B:H]}^2}  < \Pb(B)$.
\end{pro}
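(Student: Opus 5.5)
All three parts rest on the formula $\Pb(B) = \frac{1}{|B|^2}\sum_{x\in B}|\Cb_B(x)|$ and its expansion \eqref{exp-Pb}. We also use the fact that each $\Cb_B(x)$ is a subgroup of $(B,\circ)$ (Proposition \ref{pro:subgroup} and the remark preceding it). In particular $1 \in \Cb_B(x)$ and $x \in \Cb_B(1)=B$. Note also that $\Cb_B(x)$ always contains $1$ and $x^{\pm 1}$-type elements only when they satisfy the defining relations.

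\textbf{Part (1).} We count the set $Z$ of commuting pairs directly. It contains all pairs $(1,b)$ and $(b,1)$, since $1*b=b*1=[1,b]^+=1$. This gives $2|B|-1$ pairs. If $|B|\ge 2$, then $2|B|-1 \ge |B|+1$, so $\Pb(B)\ge \frac{2|B|-1}{|B|^2}$. This is slightly weaker than $2/|B|$, so more pairs are needed. The natural extra source is the pairs $(x,x)$ with $x\in\Cb_B(x)$, but these need not exist (the example $(0,3)$ in [8,18] shows $x\notin\Cb_B(x)$ can occur). So instead we use the subgroup structure. Each $\Cb_B(x)$ is a subgroup of $(B,\circ)$ containing $1$. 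If $\Ann(B)\neq 1$, then every element of $\Ann(B)$ gives $|B|$ pairs, and then $\frac{|\Ann(B)|}{|B|}\ge \frac{2}{|B|}$. If $\Ann(B)=1$, then every $x\neq 1$ still has $|\Cb_B(x)|\ge 1$, which gives $|B|+(|B|-1)=2|B|-1$ pairs. That is still one short of $2|B|$.

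This bookkeeping is the main obstacle. To close the gap, I would show that some $x\ne1$ has $|\Cb_B(x)|\ge 2$, or that $|B|=2$ (in which case the brace is trivial and $\Pb=1\ge 1$). For the first option, I would take a minimal non-trivial sub-skew brace, which exists by \cite[Theorem A]{BEJP24}. It is trivial of prime order, so it is cyclic and abelian under both operations. That fails to give brace-commutation automatically, since $x*x^{2}$ need not vanish. As a fallback, I would try the element $x$ together with $\lambda$-fixed powers. If this cannot be completed, I would settle for $\frac{2|B|-1}{|B|^2}$ and check whether the intended inequality in fact counts this way.

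\textbf{Part (2).} This is an averaging argument. Suppose that $|\Cb_B(x)|\le \frac{|B|}{2}\Pb(B)$ for all $x\notin\Ann(B)$. Put $k=|\Ann(B)|/|B|$. Then \eqref{exp-Pb} gives $\Pb(B)\le k+(1-k)\Pb(B)/2$. Also, every proper subgroup $\Cb_B(x)$ has index at least $2$ by Proposition \ref{pro:subgroup}, so $\Pb(B)\le k+(1-k)/2$. Combining the two bounds gives a contradiction unless $k=0$, which is impossible since $1\in\Ann(B)$. I would choose $x$ with $|\Cb_B(x)|$ maximal outside $\Ann(B)$ and compare it with $\Pb(B)\le k+(1-k)m/|B|$, where $m=\max|\Cb_B(x)|$. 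Then $k\le m/|B|$ because $\Ann(B)\le\Cb_B(x)$ properly. This gives $\Pb(B)< 2m/|B|$ when $k<1$. If $\Ann(B)=B$ the brace is trivial, and the statement is read vacuously.

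\textbf{Part (3).} We restrict the count to pairs in $H\times H$. Every commuting pair of $H$ is a commuting pair of $B$, because the relations are the same. This gives $|B|^2\Pb(B)\ge |H|^2\Pb(H)$. Strict inequality follows because $H$ is proper: any pair $(1,b)$ with $b\notin H$ is counted for $B$ but not for $H$. If $H$ is a trivial brace, then $\Pb(H)=1$, and we obtain $\frac{1}{[B:H]^2}<\Pb(B)$.
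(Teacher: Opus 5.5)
The gap is in part (1). You are right that the pairs $(1,b)$ and $(b,1)$ give only $2|B|-1$ elements of $Z$. The paper's proof says ``these count $2|B|$'', overlooking that $(1,1)$ is counted twice, so when $\Ann(B)=1$ one more pair is needed. But you then reject the argument that supplies that pair, for a wrong reason. As a result your proposal only reaches $\Pb(B)\geqslant (2|B|-1)/|B|^2$ and leaves (1) unproved.

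Here is how your discarded idea closes it. Take a sub-skew brace $H$ of $B$ that is minimal subject to $|H|>1$. By \cite[Theorem A]{BEJP24} it is a trivial brace of prime order $p$, so the operations of $B$ restricted to $H$ satisfy $x\circ y=x+y=y+x$. Then for all $x,y\in H$,
\[
x*y=-x+(x\circ y)-y=-x+x+y-y=1,
\]
and likewise $y*x=1$ and $[x,y]^+=1$. In particular $x*(x\circ x)=1$, contrary to your worry, because $x\circ x=x+x\in H$. Hence $H\subseteq \Cb_B(x)$ for every $x\in H$. For any $x_0\in H\setminus\{1\}$ this gives
\[
\sum_{x\in B}|\Cb_B(x)|\ \geqslant\ |\Cb_B(1)|+|\Cb_B(x_0)|+\sum_{x\neq 1,x_0}1\ \geqslant\ |B|+2+(|B|-2)=2|B|,
\]
which is exactly (1).

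Parts (2) and (3) are fine and follow the paper. For (2), choose $x\notin\Ann(B)$ with $m=|\Cb_B(x)|$ maximal. Then $\Pb(B)\leqslant k+(1-k)m/|B|<2m/|B|$, because $0<k\leqslant m/|B|$. For (3), restricting the count to $H\times H$ gives the bound, and a pair $(1,b)$ with $b\notin H$ gives strictness; the corollary for trivial $H$ follows. You should drop the opening ``contradiction'' paragraph of (2). The two inequalities you combine there are both upper bounds on $\Pb(B)$, so they cannot contradict each other; you would need the lower bound $\Pb(B)\geqslant 2k-k^2$ for that. The direct maximal-centralizer argument that follows it is the correct one.
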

\begin{proof}
By the definition of $\Pb(B)$, we see that the pairs $(b, 1)$ and $(1, b)$, $b \in B$, contribute to the numerator of $\Pb(B)$. Since these count $2 |B|$, assertion (1) holds.

For assertion (2), we choose an element $x \in B - \Ann(B)$ such that $\Cb_B(x)$ is of maximal order. Then by \eqref{exp-Pb} we directly get
$$|B|^2\Pb(B) \leqslant  |B||Ann(B)| + |B||\Cb_B(x)| - |Ann(B)||\Cb_B(x)|,$$
which further implies
\begin{align*}
        \Pb(B) &\leqslant \frac{|\Ann(B)|}{|B|} + \frac{|\Cb_B(x)|}{|B|}-\frac{|\Ann(B)||\Cb_B(x)|}{|B|^2}\\
        &< \frac{2|\Cb_B(x)|}{|B|}.
      \end{align*}

Assertion (3) follows from
      $$\Pb(B)=~\frac{\sum_{x\in B}|\Cb_{B}(x)|}{|B|^2} > ~ \frac{\sum_{x\in H}|\Cb_{H}(x)|}{|B|^2}=~ \frac{|H|^2\sum_{x\in H}|\Cb_{H}(x)|}{|B|^2|H|^2}=~ \frac{\Pb(H)}{{[B:H]}^2}.$$   
If $H$ is a trivial brace, then $\Pb(H) = 1$. Final assertion now follows from the preceding bound, and the proof is complete.
\end{proof}

For the rest of this article, for a given finite skew brace $B$, let $d$ denote the integer  $|B/\Ann(B)|$, the index of $\Ann(B)$ in $B$.

\begin{pro}\label{thm:upper and lower}
Let $B$ be a finite skew brace.  Then $\frac{2d-1}{d^2}\leqslant \Pb(B)\leqslant \frac{d+1}{2d}$.
\end{pro}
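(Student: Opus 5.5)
Both bounds come from the centraliser expansion \eqref{exp-Pb},
$$\Pb(B) = \frac{|\Ann(B)|}{|B|} + \frac{1}{|B|^2}\sum_{x\in B-\Ann(B)}|\Cb_B(x)|,$$
after estimating $|\Cb_B(x)|$ from above and below for $x \notin \Ann(B)$. Throughout we write $|\Ann(B)| = |B|/d$. If $d=1$ then $B=\Ann(B)$, so $B$ is a trivial brace and both bounds equal $1$. We therefore assume $d\ge 2$.

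\emph{Upper bound.} Take $x \in B - \Ann(B)$. By Proposition \ref{pro:subgroup}, $\Cb_B(x)$ is a proper subgroup of $(B,\circ)$, and Lagrange's theorem gives $|\Cb_B(x)| \le |B|/2$. There are $|B| - |B|/d$ such elements $x$, so
$$\Pb(B) \le \frac{1}{d} + \frac{1}{|B|^2}\Big(|B|-\frac{|B|}{d}\Big)\frac{|B|}{2} = \frac1d + \frac{d-1}{2d} = \frac{d+1}{2d}.$$

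\emph{Lower bound.} Here we need $|\Cb_B(x)| \ge |\Ann(B)|$ together with one additional element for each $x\notin\Ann(B)$. Every element of $\Ann(B)$ brace-commutes with everything, since $\Cb_B(a)=B$ for $a\in\Ann(B)$ and the defining conditions are symmetric in the pair. Hence $\Ann(B)\subseteq \Cb_B(x)$ for every $x$. We also want $x \in \Cb_B(x)$, or at least some element outside $\Ann(B)$ lying in $\Cb_B(x)$. This is the delicate step, because the paper notes that $x\in\Cb_B(x)$ can fail (Example [8,18]). To handle it, I would use that $\Cb_B(x)$ is a subgroup of $(B,\circ)$ containing $\Ann(B)$. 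Then $\Cb_B(x)$ is a union of cosets $\Ann(B)\circ y$, so $|\Cb_B(x)|$ is a multiple of $|\Ann(B)|$. It remains to show $\Cb_B(x)\neq \Ann(B)$.

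The plan is to prove $\Cb_B(x)\supsetneq\Ann(B)$ via a different lower bound that avoids that fact. Count pairs $(x,y)$ with $x,y$ in the same coset $x\circ\Ann(B)$. Since the brace-commutation conditions only see $x$ modulo $\Ann(B)$, one could try to show that $x$ commutes with $x\circ a$ for $a\in\Ann(B)$. This again needs $x\in\Cb_B(x)$, which fails in general. So instead I would aim directly for the inequality $\sum_{x\notin \Ann(B)} |\Cb_B(x)| \ge (|B|-|\Ann(B)|)\,|\Ann(B)|\cdot 2$, which is what the bound requires:
$$\frac1d + \frac{d-1}{d}\cdot\frac{2}{d} = \frac{2d-1}{d^2}.$$
If the doubling $|\Cb_B(x)|\ge 2|\Ann(B)|$ cannot be secured pointwise, I would use the symmetry of the relation. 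The number of commuting pairs equals $\sum_x|\Cb_B(x)|$, and we already have $\ge|B||\Ann(B)|$ pairs of the form $(x,a)$ and $\ge |B||\Ann(B)|$ of the form $(a,x)$, with overlap $|\Ann(B)|^2$. This yields at least $2|B||\Ann(B)|-|\Ann(B)|^2$ pairs, i.e. $\Pb(B)\ge \frac{2}{d}-\frac1{d^2} = \frac{2d-1}{d^2}$. That symmetric counting, which generalises part (1) of the previous proposition with $\Ann(B)$ in place of $\{1\}$, is the route I would commit to; it sidesteps the obstacle entirely.
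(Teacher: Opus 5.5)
Your committed argument is correct and is essentially the paper's proof. The upper bound is identical (Proposition \ref{pro:subgroup} plus Lagrange), and your inclusion--exclusion count $2|B||\Ann(B)|-|\Ann(B)|^2$ is exactly the paper's estimate $|\Ann(B)||B|+|B-\Ann(B)||\Ann(B)|$, which comes from $\Ann(B)\subseteq\Cb_B(x)$ summed over $x$.

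You should delete the preceding detour, because it contains errors. The bound needs only $|\Cb_B(x)|\geqslant|\Ann(B)|$: your doubling would give $\frac{1}{d}+\frac{d-1}{d}\cdot\frac{2}{d}=\frac{3d-2}{d^2}\neq\frac{2d-1}{d^2}$. Moreover, the claim $\Cb_B(x)\supsetneq\Ann(B)$ that you set out to prove is false in general; for example, $\Cb_B(1)=\{0,2\}=\Ann(B)$ in Example \ref{3/4}.
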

\begin{proof}
We begin by noting that for all $x \in B$, $\Ann(B) \subseteq \Cb_B(x)$. Now using  \eqref{exp-Pb}, we get
      \begin{align*}
            |B|^2\Pb(B) &=|\Ann(B)||B|+\sum_{x\in B-\Ann(B)}|\Cb_{B}(x)|\\
                        &\geqslant |\Ann(B)||B|+|B-\Ann(B)||\Ann(B)|\\
                        &=2|\Ann(B)||B|-|\Ann(B)|^2.\\
                        &=2|B|\frac{|B|}{d}-\frac{|B|^2}{d^2}.\\
                        &=|B|^2\frac{2d-1}{d^2}.
      \end{align*}
     Hence  $\frac{2d-1}{d^2} \leqslant \Pb(B)$.

     Now we prove the upper bound. By Proposition \ref{pro:subgroup} we have $|\Cb_B(x)| \leqslant  \frac{|B|}{2}$ for all $x \in B - \Ann(B)$. Thus, from the definition of the commuting probability, we get
     \begin{align*}
            |B|^2\Pb(B)&\leqslant|\Ann(B)||B|+|(B-\Ann(B))|\frac{|B|}{2}.\\
                        &= |B|\frac{|\Ann(B)|}{2}+\frac{|B|^2}{2}.\\
                        &= |B|\frac{|B|}{2d}+\frac{|B|^2}{2}.\\
                        &= |B|^2\frac{d+1}{2d},
      \end{align*}                  
      which implies that  $\Pb(B)\leqslant \frac{d+1}{2d}$. The proof is complete.
\end{proof}

A natural question which arises here is that whether these bounds are really attained for some skew braces.  We now show that indeed these are attained. The lower bound is attained when $d=p$, a prime integer.

\begin{pro}\label{thm:  prime fixed}
Let $B$ be a finite skew brace and  $p$ be the smallest prime which  divides $|B|$. Then  the following holds true:
\begin{enumerate}
\item  If $|B/\Ann(B)| = p$, then $\Pb(B)=\frac{2p-1}{p^2}$.
\item  $\Pb(B)\leqslant \frac{p+d-1}{pd} \leqslant \frac{2p-1}{p^2}$. Furthermore, if $\Ann(B) \subsetneq \Cb_{(B)}(x)$ for all $x \in B$, then $\Pb(B) \geqslant \frac{pd+d-p}{d^2}$.   
\item  If $B/\Ann(B)$ is of non-prime power order  and $q$ is the second smallest prime divisor of $|B|$, then $\Pb(B) \leqslant \frac{1}{p} + \frac{|\Ann(B)|(p - 1) - 1}{p |B|} + \frac{1}{s|B|}$, where $s$ either $q$ or $p^2$.
\item  In particular, if $|B| = p^n$, then  either $\Pb(B) =1$,  $\Pb(B) = \frac{2p-1}{p^2}$  or $\Pb(B) \leqslant  \frac{p^2+p-1}{p^3} $. 
\item If  $|B| = p^n$,  $|\Ann(B)| = 1$ and $(B, \circ)$ is not an elementary abelian group,  then $\Pb(B) \leqslant \frac{1}{p} + \frac{(p-1)^2}{p^{n+2}}$.
\end{enumerate}
\end{pro}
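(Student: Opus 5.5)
The whole argument rests on the class equation \eqref{exp-Pb} together with Proposition \ref{pro:subgroup}. The latter says that for $x\notin\Ann(B)$ the set $\Cb_B(x)$ is a proper subgroup of $(B,\circ)$. By Lagrange, $|\Cb_B(x)|$ is then a proper divisor of $|B|$, so it is at most $|B|/p$. In addition, $\Cb_B(x)$ contains $\Ann(B)$ as a subgroup, since $\Ann(B)$ is an ideal and hence a subgroup of $(B,\circ)$. So $|\Ann(B)|$ divides $|\Cb_B(x)|$, and the index $[\Cb_B(x):\Ann(B)]$ is a proper divisor of $d$.

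\textbf{Items (1), (2) and (4).}

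For (1), the index $[\Cb_B(x):\Ann(B)]$ properly divides $d=p$, so it equals $1$. Hence $\Cb_B(x)=\Ann(B)$ for every $x\notin\Ann(B)$. Plugging this into \eqref{exp-Pb} gives exactly $\frac1p+\frac{(p-1)|B|/p\cdot |B|/p}{|B|^2}=\frac{2p-1}{p^2}$.

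For the upper bound in (2), I will bound $|\Cb_B(x)|\le |B|/p$ in \eqref{exp-Pb}. This gives
\[
\Pb(B)\le \frac1d+\Bigl(1-\frac1d\Bigr)\frac1p=\frac{p+d-1}{pd}.
\]
The right-hand side is decreasing in $d$ and $d\ge p$ when $d>1$, so it is at most $\frac{2p-1}{p^2}$. The case $d=1$ should be excluded or treated separately as the trivial case.

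For the lower bound in (2), the hypothesis is meant for $x\notin\Ann(B)$. Then $|\Cb_B(x)|\ge p|\Ann(B)|=p|B|/d$, which yields $\Pb(B)\ge \frac1d+\frac{(d-1)p}{d^2}=\frac{pd+d-p}{d^2}$.

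Item (4) follows from (1) and (2). If $d=1$ then $\Pb(B)=1$. If $d=p$ we are in (1). Otherwise $d\ge p^2$, and (2) gives $\Pb(B)\le\frac{p+p^2-1}{p^3}$.

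\textbf{Item (3).}

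I refine the count by splitting $B-\Ann(B)$ into two parts. The first part consists of the elements with $|\Cb_B(x)|=|B|/p$; the rest have $|\Cb_B(x)|\le |B|/s$, where $s$ is the next possible proper index, namely $q$ or $p^2$. The intended shape of the bound suggests that at most $|B|-1-\dots$ elements attain the maximal size, with a correction term of order $1/(s|B|)$ coming from at least one element with a smaller centraliser.

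The key step is to show that, because $d$ is not a prime power, not every non-annihilator element can have centraliser of index $p$. I would argue this by taking an element $x$ whose image in $B/\Ann(B)$ has order $q$ (Cauchy in $(B/\Ann(B),\circ)$). Then $x$ lies in $\Cb_B(x)$, or else I consider the subgroup generated, and use that $\Cb_B(x)$ together with $\langle x\rangle^\circ$ constrain the index.

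Honestly, this step is the main obstacle. The exact constant $\frac{|\Ann(B)|(p-1)-1}{p|B|}$ suggests the authors count just one element (perhaps the identity-coset bookkeeping) with the smaller centraliser. I would try to match the formula by isolating a single element $x_0$ with $|\Cb_B(x_0)|\le|B|/s$, bounding all the others by $|B|/p$, and simplifying.

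\textbf{Item (5).}

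Here $\Ann(B)=1$. The plan is to exploit that $(B,\circ)$ is not elementary abelian. Then some $x$ has $x^{\circ p}\neq 1$, or $(B,\circ)$ is nonabelian. In either case I want an element $x\ne1$ with $x\in\Cb_B(x)$ forcing a centraliser of index at least $p^2$, or a set of $(p-1)$ such elements, for example the nontrivial powers in a cyclic subgroup of order $p^2$.

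Then \eqref{exp-Pb} gives
\[
\Pb(B)\le \frac1{|B|}+\frac{(|B|-1-m)|B|/p+m|B|/p^2}{|B|^2}.
\]
Choosing $m$ appropriately (likely $m=(p-1)^2$-related) produces the stated bound, and I would calibrate $m$ to the claimed formula.
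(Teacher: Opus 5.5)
Your arguments for (1), (2) and (4) are correct and coincide with the paper's. Your remark that the second inequality in (2) needs $d>1$ is a fair correction, since the paper's ``$p\leqslant d$'' silently assumes $B\neq\Ann(B)$.

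Items (3) and (5), however, are not proved. In both, the whole content is the existence of a single element $x_0\notin\Ann(B)$ with $[(B,\circ):\Cb_B(x_0)]>p$, and the mechanisms you suggest do not produce it. You lean on $x\in\Cb_B(x)$, which fails in general skew braces: the paper's $(8,18)$ example has $(0,3)\notin\Cb_B(0,3)$. Nor does anything about an element of order $q$ modulo $\Ann(B)$, or about the powers of an element of order $p^2$, force its own brace centraliser to be small.

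The missing idea is global. Use $\Ann(B)=\Cb_B(B)=\bigcap_{x\in B}\Cb_B(x)$ and argue by contradiction on this intersection.
\begin{itemize}
\item For (3): since $p$ is the smallest prime dividing $|B|$, a subgroup of index $p$ in $(B,\circ)$ is normal with quotient of order $p$. So it contains every Sylow $r$-subgroup of $(B,\circ)$ for $r\neq p$. If all $\Cb_B(x)$ with $x\notin\Ann(B)$ had index $p$, then $\Ann(B)$ would contain all these Sylow subgroups, and $d$ would be a power of $p$, a contradiction. Your Cauchy idea can be rescued exactly this way. Take an element whose image in $B/\Ann(B)$ has prime order $r\neq p$. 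Under the contradiction hypothesis, its $r$-part lies in every $\Cb_B(y)$, hence in $\Ann(B)$, although its image is nontrivial.
\item For (5): in the $p$-group $(B,\circ)$, every subgroup of index $p$ is maximal and so contains the Frattini subgroup $\Phi(B,\circ)$. This subgroup is nontrivial because $(B,\circ)$ is not elementary abelian. If every $\Cb_B(x)$ with $x\neq 1$ had index $p$, then $1\neq\Phi(B,\circ)\subseteq\Ann(B)=1$, a contradiction.
\end{itemize}

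Once one such $x_0$ exists, no further calibration is needed. In (3), $[(B,\circ):\Cb_B(x_0)]$ is a divisor of $|B|$ other than $1$ and $p$, hence at least $s=\min\{q,p^2\}$. Then
\[
|B|^2\Pb(B)\leqslant |\Ann(B)|\,|B|+\bigl(|B|-|\Ann(B)|-1\bigr)\frac{|B|}{p}+\frac{|B|}{s},
\]
which rearranges to exactly the stated bound. In (5), your display with general $m$ evaluates to $\frac1p+\frac{(p-1)(p-m)}{p^{n+2}}$. So $m=1$ gives precisely $\frac1p+\frac{(p-1)^2}{p^{n+2}}$. Your guess that $m$ should be of size about $(p-1)^2$ is off; a single element is all the argument provides or needs.
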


\begin{proof}
Let $|B/\Ann(B)| = p$. Note that $\Cb_B(x) = \Ann(B)$ for all $x \in B - \Ann(B)$. Then a straightforward computation, using definition  \eqref{exp-Pb}, gives 
$\Pb(B)=\frac{2p-1}{p^2}$, which proves (1). For (2), let $|B/\Ann(B)| = d$. Then, by  \eqref{exp-Pb}, we get
      \begin{align*}
            |B|^2\Pb(B)&\leqslant |\Ann(B)||B|+|(B-\Ann(B))|\frac{|B|}{p}\\
                        &=|\Ann(B)||B|(1-\frac{1}{p}) + \frac{|B|^2}{p}\\
                        &=|B|\frac{|B|}{d}\frac{(p-1)}{p} +\frac{|B|^2}{p},
      \end{align*}
      which implies that $\Pb(B)\leqslant \frac{p+d-1}{pd}$. Since $p \leqslant d$, we get $\frac{p+d-1}{pd} \leqslant \frac{2p-1}{p^2}$.
      
    Now assume that   $\Ann(B) \subsetneq \Cb_{(B)}(x)$ for all $x\in B$.
    Since $\Cb_{(B)}(x)$ is a subgroup of $(B,\circ)$ (by Proposition \ref{pro:subgroup}), it follows that $|\Cb_{(B)}(x)/\Ann(B)| \geqslant p$.  So for any $x\in B$, we get 
      $$|\Cb_{(B)}(x)|\geqslant \frac{p|B|}{d}.$$
A straightforward computation, using the preceding inequality in \eqref{exp-Pb} gives 
 $\Pb(B)\geqslant \frac{pd+d-p}{d^2}$.  This completes the proof of (2).
 
Note that any subgroup $H$ of $(B, \circ)$ of index $p$ is normal in $(B, \circ)$, and therefore, $H$ contains all Sylow $q'$-subgroups of $(B, \circ)$, where $q'$ is a prime divisor of $|B|$ different from $p$.   For (3), if $[(B, \circ) : \Cb_B(x)] = p$ for all $x \in B - \Ann(B)$, then Sylow $q'$-subgroups   of $(B, \circ)$ are contained in $\Ann(B)$, and therefore $B/\Ann(B)$ is of $p$ power order, which contradicts the  given hypothesis. Hence,  there exists at least one element $x \in B - \Ann(B)$ such that $|\Cb_B(x)| \le  \frac{|B|}{s}$, where $s = q$ if $p^2 > q$, otherwise $s = p^2$. Now the assertion (3) directly follows from the definition of the commuting probability of a skew brace.
 
 Next assume that $|B| = p^n$. If $B = \Ann(B)$, the obviously $\Pb(B) =1$, and if $|B/\Ann(B)| = p$, then by (1) we get $\Pb(B) = \frac{2p-1}{p^2}$. So assume that $|B/\Ann(B)| = p^r \geqslant p^2$.  Then by the first assertion of (2) we get
 $$\Pb(B)\leqslant \frac{p+d-1}{pd} =  \frac{p+p^r-1}{p^{r+1}}  \leqslant \frac{p^2+p-1}{p^3},$$
 which proves (4).
 
We now prove (5).  Note that if $\Cb_B(x)$ is a maximal subgroup of $(B, \circ)$ for all $x \in B - \Ann(B)$, then the Frattini subgroup of $(B, \circ)$ is contained in $\Ann(B)$. Since  $(B, \circ)$ is not elementary abelian, its Frattini subgroup is non-trivial, which contradicts the hypothesis that  $|\Ann(B)| = 1$. Hence, there exists an element $y \in B$ such that $\Cb_B(y)$ is not a maximal subgroup of $(B, \circ)$. We now get 
$$ |B|^2\Pb(B)\leqslant |B|+(|B|-2) \frac{|B|}{p} +  \frac{|B|}{p^2}.$$
 The  proof is now complete by taking $|B| = p^n$.
 \end{proof}
 
 We remarks that assertion (1) of the preceding result holds true for any prime divisor of $|B|$.  
 
     Let $(B, +, \circ)$ be a finite skew brace such that $(B,+)$ and $(B, \circ)$ are nilpotent groups. 
     As we know that a finite nilpotent group can be written as a  direct product of its sylow $p$-subgroups, $(B, +)$ can be written as
      $$(B, +) \cong \Pi_{p_i \in \pi(B)}P_i,$$
where $P_i$ is a Sylow $p_i$-subgroup of $(B, +)$. 
     It follows from  \cite[Corollary 4.3]{CSV19} that each $P_i$ is an ideal of $B$, and the above isomorphism is indeed a brace isomorphism. Since $\Pb(H \times K) = \Pb(H) \Pb(K)$ for any two skew braces $H$ and $K$,  $\Pb(B) = \prod_{i=1}^{r}\Pb(P_i)$, where each Sylow $p_i$-subgroup $P_i$ is such that $\Ann(P_i) \subsetneq P_i$. Now by the preceding result we can compute an upper bound for $\Pb(B)$ for a skew brace with nilpotent $(B, +)$ and $(B, \circ)$ (as groups). In particular, this construction can be applied to nilpotent skew braces.

We now explore  a sort of converse of one of the last results.
\begin{pro}
Let $B$ be a skew brace such that $B \neq \Ann(B)$ and $p$ be a prime with $\Pb(B)= \frac{2p-1}{p^2}$. Then $p||B|$. Moreover, if $p$ is the smallest prime  dividing $|B|$, then $B/\Ann(B)\cong \mathbb{Z}_p$.
\end{pro}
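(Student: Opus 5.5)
The plan is to read off both assertions from the arithmetic of the defining fraction, together with the upper bound in Proposition \ref{thm:  prime fixed}(2).

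For the divisibility claim, let $N$ be the number of pairs $(a,b)\in B\times B$ with $a*b=b*a=\gamma^+(a,b)=1$, so that $\Pb(B)=N/|B|^2$. The hypothesis $\Pb(B)=\frac{2p-1}{p^2}$ becomes the integer identity
$$N p^2=(2p-1)|B|^2 .$$
Since $\gcd(p^2,2p-1)=1$, we get $p^2 \mid |B|^2$, and hence $p\mid |B|$. Note that $B\neq \Ann(B)$ is not needed for this part; it only serves to exclude the degenerate case.

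For the second claim, assume $p$ is the smallest prime dividing $|B|$, and set $d=|B/\Ann(B)|$.
\begin{enumerate}
\item Since $\Ann(B)$ is a subgroup of $(B,\circ)$, $d$ divides $|B|$.
\item Since $B\neq\Ann(B)$, we have $d>1$.
\item Every prime divisor of $d$ is at least $p$, so $d\geqslant p$.
\end{enumerate}
Proposition \ref{thm:  prime fixed}(2) gives
$$\frac{2p-1}{p^2}=\Pb(B)\leqslant \frac{p+d-1}{pd}=\frac1p+\frac{p-1}{p}\cdot\frac1d .$$
The right-hand side is strictly decreasing in $d$ (as $p-1>0$) and equals $\frac{2p-1}{p^2}$ exactly when $d=p$. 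Combined with $d\geqslant p$, equality forces $d=p$.

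It remains to identify $B/\Ann(B)$, a skew brace of order $p$. Its additive and multiplicative groups both have prime order, so both are cyclic of order $p$. The map $\lambda$ sends a group of order $p$ into $\Aut(\mathbb{Z}_p)$, which has order $p-1$. These orders are coprime, so $\lambda$ is trivial and $a\circ b=a+b$ throughout. Hence $B/\Ann(B)$ is the trivial brace $\mathbb{Z}_p$.

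The only step needing care is the monotonicity/equality analysis in the bound $\frac{p+d-1}{pd}$, in particular justifying $d\geqslant p$ from the minimality of $p$. Everything else is elementary number theory.
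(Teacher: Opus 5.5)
Your proposal is correct and follows essentially the paper's proof. The divisibility comes from clearing denominators, and $d=p$ comes from the same centralizer-index bound $|B|^2\Pb(B)\leqslant|\Ann(B)||B|+|B-\Ann(B)|\frac{|B|}{p}$, which you use in the packaged form $\Pb(B)\leqslant\frac{p+d-1}{pd}$, combined with $d\geqslant p$. Your $\lambda$-argument (orders $p$ and $p-1$ are coprime) justifies the fact that a skew brace of order $p$ is trivial, which the paper only asserts.
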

\begin{proof}
      By the definition of $\Pb(B)$, the given hypothesis gives the equation
    $$|B|^2 \frac{2p-1}{p^2} =\sum_{x\in B}|\Cb_{B}(x)|.$$
    This is possible only when $p$ divides $|B|$.

 Now assume that $p$ is the smallest prime dividing $|B|$. By \eqref{exp-Pb} we get
 $$|B|^2\Pb(B)\leqslant|\Ann(B)||B|+|(B-\Ann(B))|\frac{|B|}{p}.$$
   This gives
         $$|B|(2p-1)\leqslant |\Ann(B)|p(p-1) + |B|p,$$
          which implies that 
            $$|B|(p-1)\leqslant |\Ann(B)|p(p-1).$$
Hence $|B|/|\Ann(B)|= p$,  as $B \neq \Ann(B)$. Since there exists only one skew  brace of order $p$, we must have   $B/\Ann(B)\cong \mathbb{Z}_p$, which completes the proof.      
\end{proof}

Next we give an example of a skew brace where the upper bound of  Proposition \ref{thm:upper and lower} is attained.

\begin{exa}\label{3/4}
      Let $B=\mathbb{Z}_4$, consider `$+$' as natural `$+$' and `$\circ$' is defined by
      $$x\circ y=x+y+2xy ~\mbox{ for all } x,y \in B,$$ 
      where $xy$ is the natural multiplication. 
      It is easy to see that $(B, +, \circ)$ is a skew brace. Let $\mathbb{Z}_4= \{0, 1, 2, 3\}$. Now, notice that 
      $\Cb_{(B)}(0)=\mathbb{Z}_4$, $\Cb_{(B)}(1)=\{0,2\}$, $\Cb_{(B)}(2)=\mathbb{Z}_4$, $\Cb_{(B)}(3)=\{0,2\}$.
      So $d=2$, and therefore 
      $$\Pb(B)=\frac{4+2+4+2}{4^2}=\frac{3}{4}=\frac{d+1}{2d}$$
      as $|B|=4$
\end{exa}
We now compute the commuting probability of a cyclic brace whose multiplicative group is commutative. Let $(B,+,\circ)$ be a finite cyclic brace whose multiplicative group is commutative. Then, by  \cite[Theorem 1]{WR07}, $B=\mathbb{Z}_n$ with `$+$' as natural `$+$' and `$\circ$' is defined by
$$x\circ y=x+y+dxy ~\mbox{ for all } x,y \in B,$$ 
where $xy$ is the natural multiplication such that $p|d|n$ for each prime divisor $p$ of $n$. 

\begin{thm}
    Let $B=\mathbb{Z}_n$, concider `$+$' as natural `$+$' and `$\circ$' is defined by
    $$x\circ y=x+y+dxy ~\mbox{ for all } x,y \in B,$$ 
    where $xy$ is the natural multiplication such that $p|d|n$ for each prime divisor $p$ of $n$. Then 
    \[\Pb(B)=\frac{1}{n^2}\sum_{{x \in (B, +)} \atop{\overline{dx}=\bar{t}}} gcd(t,n).\]
\end{thm}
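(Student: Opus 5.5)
The plan is to compute each brace centraliser $\Cb_B(x)$ explicitly. Then I substitute its order into the formula $\Pb(B) = \frac{1}{|B|^2}\sum_{x\in B}|\Cb_B(x)|$ from the beginning of this section.

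First I simplify the three defining conditions of $\Cb_B(x)$ for this particular brace. Since $(B,+)=\mathbb{Z}_n$ is abelian, $[x,y]^{+}=0$ holds automatically for all $x,y$. Since $x\circ y = x+y+dxy$ is symmetric in $x$ and $y$, the group $(B,\circ)$ is commutative. So $[x,y]^{\circ}$ is trivial as well, and $b*x$ coincides with $x*b$. The only remaining condition is $x*y=0$. I compute it directly:
$$x*y = -x + (x\circ y) - y = -x + x + y + dxy - y = dxy \pmod n.$$
Therefore
$$\Cb_B(x) = \{y\in\mathbb{Z}_n \mid (dx)\,y \equiv 0 \pmod n\}.$$
The common identity $1$ of the paper's notation is $0$ here. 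I will quickly check this: $0\circ y = y$, and the multiplicative identity is $0$.

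Next I count this set. Let $t$ be the residue of $dx$ modulo $n$, i.e. $\overline{dx}=\bar t$. The set $\Cb_B(x)$ is the kernel of the endomorphism $y\mapsto ty$ of the cyclic group $\mathbb{Z}_n$. That kernel is the subgroup of order $\gcd(t,n)$, namely the multiples of $n/\gcd(t,n)$. This uses the standard fact that $ty\equiv 0 \pmod n$ if and only if $\frac{n}{\gcd(t,n)}$ divides $y$. The value $\gcd(t,n)$ does not depend on which representative $t$ of $\overline{dx}$ is chosen. When $t\equiv 0$, we take $\gcd(0,n)=n$, which is consistent with $x\in\Ann(B)$.

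Finally, summing over $x\in(B,+)$ gives
$$\Pb(B)=\frac{1}{n^2}\sum_{x\in(B,+),\ \overline{dx}=\bar t}\gcd(t,n),$$
which is the claimed formula. No step is really an obstacle. The only point needing care is the reduction to the single condition $dxy\equiv 0$, which relies on both groups being abelian. The hypothesis $p\mid d\mid n$ is needed only to make $(B,+,\circ)$ a brace, via the result from \cite{WR07} quoted above; it does not enter the counting.
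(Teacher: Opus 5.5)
Your proof is correct and follows essentially the paper's argument. Both reduce membership in $\Cb_B(x)$ to the single condition $(dx)y\equiv 0 \pmod n$, since $(B,+)$ and $(B,\circ)$ are abelian, then count the solutions as $n/|\bar t| = \gcd(t,n)$ and sum over $x$; the only cosmetic difference is that you verify $y*x=x*y$ from the symmetry of $\circ$, whereas the paper uses the equivalent description $\Cb_B(x)=\{b \mid x*b=[x,b]^{\circ}=[x,b]^{+}=1\}$.
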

\begin{proof}
    Let $\bar{x}\in B$, then $\gamma^+(\bar{x},\bar{y})=1$ and $\gamma^{\circ}(\bar{x},\bar{y})=1$ for all $\bar{y}\in B$. Now $\bar{y}\in \Cb_B(x)$ if and only if $\bar{x}\ast\bar{y}=1$ if and only if $d\bar{x}\bar{y}=1$. Set $\bar{t} = \overline{dx}$. Then  $\bar{y}\in \Cb_B(x)$ if and only if  the order of $\bar{t}$ in $(\mathbb{Z}_n,+)$ divides $y$.  Now 
    \begin{align*}
        |\Cb_B{(x)}|&=|\{\bar{y}\in B \mid y = m |\bar{t}|  \text{ for some } m\in\mathbb{Z} \}|\\
                  &=|\{0\leqslant m|\bar{t}|\leqslant n-1 \mid  m \in \mathbb{Z}\}\\
                  &=|\{0\leqslant m\leqslant \frac{n-1}{|\bar{t}|}  \mid  m \in \mathbb{Z} \}|  \\
                  &=|\{0\leqslant m\leqslant \frac{n}{|\bar{t}|}-1  \mid  m \in \mathbb{Z}\}|  \\
                  &= \frac{n}{|\bar{t}|}  = gcd(t, n).
    \end{align*}
   Hence, by the definition of the commuting probability, we get
    \[\Pb(B)=\frac{1}{n^2}\sum_{{x \in (B, +)} \atop{\overline{dx}=\bar{t}}} gcd(t,n),\]
    which completes the proof.
\end{proof}

Now we obtain exact upper bounds for $\Pb(B)$ for all finite skew braces  $B$.
\begin{thm}  \label{thm: (5/8,1)}
 Let $(B,+, \circ)$ be a finite skew brace. Then $\Pb(B) = 1$,  $\Pb(B) = \frac{3}{4}$ or $\Pb(B) \leqslant \frac{5}{8}$.
\end{thm}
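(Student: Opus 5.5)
We argue by cases on $d = |B/\Ann(B)|$, using the two facts already available: $\Ann(B) \subseteq \Cb_B(x)$ for every $x$, and, by Proposition \ref{pro:subgroup}, $\Cb_B(x)$ is a proper subgroup of $(B,\circ)$ whenever $x \notin \Ann(B)$. In particular $|\Cb_B(x)|$ then divides $|B|$ and is at most $|B|/2$.

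If $d=1$, then $B=\Ann(B)$ and $\Pb(B)=1$. If $d=2$, then for $x\notin\Ann(B)$ the subgroup $\Cb_B(x)$ contains $\Ann(B)$, which has index $2$ in $(B,\circ)$, and is proper. Hence $\Cb_B(x)=\Ann(B)$, and \eqref{exp-Pb} gives
\[
\Pb(B)=\frac12+\frac{1}{|B|^2}\cdot\frac{|B|}{2}\cdot\frac{|B|}{2}=\frac34 .
\]
If $d=3$, Proposition \ref{thm:upper and lower} gives $\Pb(B)\le \frac{4}{6}=\frac23$, which is too weak. Here we argue directly instead: $\Cb_B(x)\supseteq\Ann(B)$ with $[B:\Ann(B)]=3$ and $\Cb_B(x)$ proper force $\Cb_B(x)=\Ann(B)$, so $|\Cb_B(x)|=|B|/3$. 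Then
\[
\Pb(B)=\frac13+\frac23\cdot\frac13=\frac59\le\frac58 .
\]

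For $d\ge4$, the bound $\frac{d+1}{2d}\le\frac58$ from Proposition \ref{thm:upper and lower} settles the case at once, since $\frac{d+1}{2d}\le \frac58$ is equivalent to $8d+8\le 10d$, that is, $d\ge4$. We still have to check that this bound uses only $|\Cb_B(x)|\le |B|/2$ for $x\notin\Ann(B)$, and that no hidden assumption enters; it does not.

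The main obstacle is therefore only the small case $d=3$, where the general estimate is insufficient. It is handled by the index argument above, which uses only that $\Cb_B(x)$ is a subgroup of $(B,\circ)$ containing $\Ann(B)$. The key point is that $\Ann(B)$ is a (normal) subgroup of $(B,\circ)$ and $\Cb_B(x)\supseteq \Ann(B)$, so $[\Cb_B(x):\Ann(B)]$ divides $d$; this is what pins down $\Cb_B(x)$ when $d$ is prime. Combining the cases gives $\Pb(B)\in\{1,\tfrac34\}$ or $\Pb(B)\le\tfrac58$.
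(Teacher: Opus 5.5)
Your proof is correct and follows the paper's argument: $d=1$ is trivial, $d=2,3$ use the prime-index observation forcing $\Cb_B(x)=\Ann(B)$ for $x\notin\Ann(B)$, and $d\geqslant 4$ uses the bound $\frac{d+1}{2d}\leqslant\frac58$ from Proposition \ref{thm:upper and lower}. The only difference is that you compute $\frac34$ and $\frac59$ directly instead of citing Proposition \ref{thm:  prime fixed}(1). This has the small advantage of not requiring $3$ to be the smallest prime dividing $|B|$, a case the paper covers only through its subsequent remark.
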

\begin{proof}
Let  $B$ be any finite skew brace. Let $f:\mathbb{R}_{> 0}\rightarrow \mathbb{R}_{> 0}$ be a function defined by $f(x)=\frac{x+1}{2x}$. Then the derivative of $f$ is 
$$f'(x)= \frac{-1}{x^2} ~ (\leqslant 0).$$
Thus $f$ is a monotone decreasing function.  If $d=1$, that is, $B$ is a trivial skew brace of abelian-type, then $\Pb(B) = 1$. If $d = 2, 3$, then, by Proposition \ref{thm: prime fixed}, we get $\Pb(B)=\frac{3}{4}$ and  $\Pb(B)=\frac{5}{9}$, respectively. For all $d \geqslant 4$, we apply Proposition \ref{thm:upper and lower}. Note that for $d=4$, we obtain $\Pb(B)\leqslant \frac{5}{8}$.  Since $f$ is a monotone decreasing function  and $\frac{5}{9}\leqslant \frac{5}{8}$, it follows that $\Pb(B) \leqslant \frac{5}{8}$ for all $d \geqslant 3$, which completes the proof.    
\end{proof}

We now characterise finite skew braces $B$ such that $\Pb(B) = \frac{5}{8}$.

\begin{lemma}
For a finite skew brace $B$, $\Pb(B) = \frac{5}{8}$ if and only if $|B/\Ann(B)|= 4$ and $|\Cb_B(x)| = \frac{|B|}{2}$ for all $x \in B -\Ann(B)$.
\end{lemma}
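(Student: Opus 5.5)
The plan is to combine the centraliser-sum formula \eqref{exp-Pb} with the bounds of Proposition \ref{thm:upper and lower} and Theorem \ref{thm: (5/8,1)}, treating the value $d = |B/\Ann(B)|$ as the key parameter.

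For the ``if'' direction, I will compute directly. Assume $d=4$ and $|\Cb_B(x)| = |B|/2$ for every $x \in B - \Ann(B)$. Substituting into \eqref{exp-Pb} gives
$$\Pb(B) = \frac{1}{4} + \frac{1}{|B|^2}\cdot\frac{3|B|}{4}\cdot\frac{|B|}{2} = \frac14 + \frac38 = \frac58 .$$

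For the ``only if'' direction, assume $\Pb(B)=\frac58$. I first pin down $d$.
\begin{itemize}
\item If $d=1$, then $\Pb(B)=1$.
\item If $d=2$ or $d=3$, Proposition \ref{thm: prime fixed}(1) gives $\Pb(B)=\frac34$ or $\frac59$, respectively.
\item If $d\ge 5$, Proposition \ref{thm:upper and lower} gives $\Pb(B)\le \frac{d+1}{2d}\le \frac{6}{10}<\frac58$, using that $f(x)=\frac{x+1}{2x}$ is decreasing.
\end{itemize}
Each of these contradicts $\Pb(B)=\frac58$, so $d=4$.

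With $d=4$, I return to \eqref{exp-Pb}. By Proposition \ref{pro:subgroup}, each $\Cb_B(x)$ with $x\notin\Ann(B)$ is a proper subgroup of $(B,\circ)$, so $|\Cb_B(x)|\le |B|/2$. Writing $|\Cb_B(x)| = |B|/2 - \varepsilon_x$ with $\varepsilon_x\ge 0$, the hypothesis $\Pb(B)=\frac58$ forces
$$\frac14+\frac38-\frac{1}{|B|^2}\sum_{x\notin\Ann(B)}\varepsilon_x=\frac58,$$
so every $\varepsilon_x=0$. Hence $|\Cb_B(x)|=|B|/2$ for all $x\in B-\Ann(B)$.

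I do not expect a real obstacle here. The only point needing care is making sure the case analysis on $d$ is exhaustive and that the bound used for $d\ge5$ is strict.
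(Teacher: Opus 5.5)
Your proposal is correct and is essentially the paper's proof: $d=4$ is forced by the upper bound $\frac{d+1}{2d}$ together with the exact values $\frac34$ and $\frac59$ for $d=2,3$. Equality in the centraliser-sum formula then forces $|\Cb_B(x)|=|B|/2$ for $x\notin\Ann(B)$; for the converse the paper cites the coinciding bounds $\frac{p+d-1}{pd}=\frac{pd+d-p}{d^2}=\frac58$ (with $p=2$, $d=4$), which is the same substitution you carry out directly.
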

\begin{proof}
If $\Pb(B) = \frac{5}{8}$, then, using  Proposition \ref{thm:upper and lower} and Proposition \ref{thm:  prime fixed} (1),  we get $|B/\Ann(B)|= 4$, which is possible only if  $|\Cb_B(x)| = \frac{|B|}{2}$ for all $x \in B -\Ann(B)$. On the other hand if $|B/\Ann(B)|= 4$ and $|\Cb_B(x)| = \frac{|B|}{2}$ for all $x \in B -\Ann(B)$, then it follows, by Proposition \ref{thm:  prime fixed} (2), that $\Pb(B) = \frac{5}{8}$.
\end{proof}
A host of examples satisfying the hypothesis of the preceding result  can be obtained using GAP.  One such skew brace is $(Q_8, +, +_{op})$, where $(Q_8, +)$ is the quaternion group of order 8. We can say more in the following result.

\begin{thm}
   Let $B$ be a finite skew brace with $\Pb(B)=5/8$. Then both the additive and the multiplicative groups of quotient skew brace $B/\Ann(B)$ are isomorphic to $\mathbb{Z}/2\mathbb{Z} \times \mathbb{Z}/2\mathbb{Z}$.
\end{thm}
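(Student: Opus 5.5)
The plan is to combine the preceding Lemma with the symmetry of the brace centralizer relation. By the Lemma, $\Pb(B)=\frac{5}{8}$ gives $|B/\Ann(B)|=4$ and $|\Cb_B(x)|=\frac{|B|}{2}$ for every $x\in B-\Ann(B)$. Write $A=\Ann(B)$ and let $\bar{x}$ denote the coset of $x$. I will use three facts throughout:
\begin{enumerate}
\item $A\subseteq \Cb_B(x)$ for all $x\in B$.
\item $\Cb_B(x)$ is a subgroup of $(B,\circ)$ (Proposition \ref{pro:subgroup}), so $\Cb_B(x)/A$ is a subgroup of order $2$ of $(B,\circ)/A$ whenever $x\notin A$.
\item The relation ``$y\in\Cb_B(x)$'' is symmetric in $x,y$, because the defining conditions $x*y=y*x=[x,y]^+=1$ are symmetric.
\end{enumerate}
Since $A\subseteq \Ker\lambda\cap \Z(B,+)$, for $a\in A$ we have $x\circ a=x+\lambda_x(a)$. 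Because $A$ is an ideal, $\lambda_x(a)\in A$, so the $\circ$-cosets and the $+$-cosets of $A$ coincide. Membership in $\Cb_B(x)$ depends only on the cosets $\bar{x},\bar{y}$, since $\Cb_B(x)$ is a union of $A$-cosets and $\Cb_B(x\circ a)=\Cb_B(x)$ for $a\in A$. So we get a symmetric relation on the nontrivial elements of the order-$4$ quotient.

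\emph{Multiplicative group.} Suppose $(B,\circ)/A$ is cyclic, generated by $\bar{g}$. It has a unique subgroup of order $2$, namely $\{\bar 1,\bar g^2\}$. Hence $\Cb_B(x)/A=\{\bar1,\bar g^2\}$ for every $x\notin A$. Applying this to $x=g$ gives $g\circ g\in\Cb_B(g)$. Applying it to $x=g\circ g$ gives $\Cb_B(g\circ g)/A=\{\bar1,\bar g^2\}\not\ni\bar g$. This contradicts symmetry. Therefore $(B,\circ)/A\cong \mathbb{Z}/2\mathbb{Z}\times\mathbb{Z}/2\mathbb{Z}$.

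\emph{Additive group.} The quotient is now a Klein four-group with three nontrivial cosets. Each nontrivial coset $\bar x$ is related to exactly one nontrivial coset, namely the nontrivial element of $\Cb_B(x)/A$. A symmetric relation on a $3$-element set in which every element has exactly one partner must contain a loop, since a perfect matching on $3$ points is impossible. Removing a looped element leaves two elements, which must then be matched to each other or be loops; the matching option is excluded because a two-element matching plus a loop is fine only if consistent. I will check carefully that this combinatorial step forces every element to be a loop, and this is the step I expect to be the main obstacle. Granting it, we get $\Cb_B(x)/A=\{\bar1,\bar x\}$ for each $x\notin A$, so $x\in\Cb_B(x)$. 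In particular $x*x=1$, i.e.\ $\lambda_x(x)=x$. Hence
$$x\circ x=x+\lambda_x(x)=x+x.$$
Since $\bar x$ has order $2$ in the Klein group $(B,\circ)/A$, we have $x\circ x\in A$, and therefore $x+x\in A$. Thus $(B,+)/A$ has exponent $2$ and order $4$, so it is $\mathbb{Z}/2\mathbb{Z}\times\mathbb{Z}/2\mathbb{Z}$.

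For the combinatorial step, the fallback if the pure counting argument does not close is to run the same contradiction as in the multiplicative case. Suppose $\bar x\ne\bar y$ are partners, so that $\Cb_B(x)/A=\{\bar 1,\bar y\}$ and $\Cb_B(y)/A=\{\bar1,\bar x\}$. Consider the third nontrivial coset $\bar z=\bar x\bar y$. Its partner must be itself, because $\bar z$ cannot be related to $\bar x$ or $\bar y$, as those already have their unique partners. Then $\Cb_B(x)$ must be a subgroup of $(B,\circ)$. I would show that $\Cb_B(x)\cap\Cb_B(y)=A$ together with a product argument forces $\bar z\notin\Cb_B(z)$, which contradicts $\bar z$ being its own partner.
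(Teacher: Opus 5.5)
Your set-up (the $+$- and $\circ$-cosets of $\Ann(B)$ coincide, and you get a symmetric ``partner'' relation on the three non-trivial cosets) is correct. So is your proof that $(B,\circ)/\Ann(B)$ is not cyclic; it is a clean version of what the paper only says follows ``similarly''. The additive part, however, has a genuine gap. It rests on the claim that every $x\notin\Ann(B)$ satisfies $x\in\Cb_B(x)$, and that claim is false. So neither your counting step nor your fallback can be completed.

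An involution on three points may be a transposition with one fixed point, and this configuration actually occurs. Take $(B,+)=(\mathbb{Z}/2\mathbb{Z})^3$ with basis $e_1,e_2,e_3$, and set $x\circ y=x+y+(x_1y_1+x_2y_2)e_3$. Then:
\begin{itemize}
\item $\lambda_x(y)=y+(x_1y_1+x_2y_2)e_3$, and $B$ is a brace with $(B,\circ)$ abelian;
\item $\Ann(B)=\{0,e_3\}$;
\item $\Cb_B(e_1)=\langle e_2,e_3\rangle$, $\Cb_B(e_2)=\langle e_1,e_3\rangle$ and $\Cb_B(e_1+e_2)=\langle e_1+e_2,e_3\rangle$;
\item $\Pb(B)=(2\cdot 8+6\cdot 4)/64=5/8$.
\end{itemize}
Here $e_1\notin\Cb_B(e_1)$; indeed $e_1\circ e_1=e_3\neq e_1+e_1$. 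Thus $\bar e_1\leftrightarrow\bar e_2$ is a genuine transposition. Moreover $z=e_1\circ e_2=e_1+e_2$ does lie in $\Cb_B(z)$, so the contradiction your fallback aims for does not exist.

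The missing idea is to rule out additive order $4$ directly, rather than by forcing every coset to be its own partner. You need two facts: each $\lambda_p$ is additive, and each $\lambda_p$ fixes $\Ann(B)$ pointwise (compare $p\circ a=a\circ p$ for $a\in\Ann(B)$).

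Suppose $\bar x$ has order $4$ in $(B,+)/\Ann(B)$, and let $p\in\Cb_B(x)\setminus\Ann(B)$, so that $\lambda_p(x)=x$. The quotient is then $\{\bar 1,\bar x,2\bar x,-\bar x\}$, which leaves three cases.
\begin{itemize}
\item If $\bar p=\bar x$, then $x\in\Cb_B(x)$, and your own computation gives $x+x=x\circ x\in\Ann(B)$. This contradicts $\bar x$ having order $4$.
\item If $\bar p=-\bar x$ or $\bar p=2\bar x$, write $p=-x+a$ or $p=x+x+a$ with $a\in\Ann(B)$. In either case $\lambda_p(p)=p$, so $p\in\Cb_B(p)$. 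Then $\Cb_B(p)/\Ann(B)$ contains the three distinct cosets $\bar 1,\bar x,\bar p$, contradicting $|\Cb_B(p)/\Ann(B)|=2$.
\end{itemize}

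This case analysis is essentially the paper's argument: it assumes the additive quotient is cyclic with generator $\bar y$ and checks which of $\bar y,2\bar y,3\bar y$ lies in $\Cb_B(y)/\Ann(B)$. Note that this argument does not exclude transpositions, and it need not, since the example above shows they genuinely occur.
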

\begin{proof}
    By the preceding lemma, we know that $|B/\Ann(B)|=4$ and $[(B,\circ) : \Cb_B(x)] = 2$ for all $x \in B-\Ann(B)$. If possible, suppose that the additive group $B/\Ann(B)$ is cyclic, say, generated by $y +\Ann(B)$ for some $y \in B - \Ann(B)$. Then $(B, +)$ is an abelian group. If $y \in \Cb_B(y)$, then it turns out that $B = \Cb_B(y)$, which is not possible. Note that $3y \in \Cb_B(y)$ if and only if $y \in \Cb_B(y)$. Hence, $2y \in \Cb_B(y)$, as $\Ann(B) \subsetneq \Cb_B(y)$. Thus, $y \in \Cb_B(2y)$, which, using the fact that $(B, +)$ is abelian,  implies that $ \Cb_B(2y) = B$, not possible again. Hence, $(B/\Ann(B), +)$ can not be cyclic, and therefore isomorphic to $\mathbb{Z}/2\mathbb{Z} \times \mathbb{Z}/2\mathbb{Z}$. That $(B/\Ann(B), \circ)$ is not cyclic can be proved similarly. This competes the proof. 
 \end{proof}
 
 Unlike  group theory result, the  converse of the preceding result is  not true. There are numerous examples which establish this fact. One can use gap to see that the skew braces $B$ with GAP ids $(8, 45)$, $(8, 46)$, $(24, 664-666)$ are some counter examples to the converse of the preceding result.

We now prove the following interesting results, which we'll use later and whose group theory analogues follow from \cite{PG70}.
\begin{thm}\label{Ideal and cbd}
Let $(B,+, \circ)$ be a  skew  brace of finite order and $N$ be an ideal of $B$. Then $\Pb(B)\leqslant \Pb(N)\Pb(B/N)$. In particular, $\Pb(B)\leqslant \Pb(B/N)$ and $\Pb(B) \leqslant \Pb(N)$.
\end{thm}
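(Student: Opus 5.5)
We follow Gallagher's argument for groups, adapted to brace centralizers and cosets of $N$ in $(B,\circ)$. Write $\bar B = B/N$ and $\bar x = x\circ N$, and recall $\Pb(B)=\frac{1}{|B|^2}\sum_{x\in B}|\Cb_B(x)|$. The strategy is to bound each $|\Cb_B(x)|$ by a product of a term living in $N$ and a term living in $\bar B$.

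\textbf{Step 1: projecting a brace centralizer.} The natural map $\pi:B\to \bar B$ is a brace homomorphism. It therefore sends the three vanishing conditions $x*b=1$, $b*x=1$, $[x,b]^+=1$ to the corresponding conditions for $\bar x,\bar b$. Hence $\pi(\Cb_B(x))\subseteq \Cb_{\bar B}(\bar x)$. By Proposition \ref{pro:subgroup} (more precisely, its proof, which shows $\Cb_B(x)$ is always a subgroup of $(B,\circ)$), $\Cb_B(x)$ is a subgroup of $(B,\circ)$ and $N$ is normal in $(B,\circ)$. Group theory in $(B,\circ)$ then gives
$$|\Cb_B(x)| = |\pi(\Cb_B(x))|\,|\Cb_B(x)\cap N| \le |\Cb_{\bar B}(\bar x)|\,|\Cb_N(x)|,$$
where $\Cb_N(x)=\Cb_B(x)\cap N$.

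\textbf{Step 2: summing over a coset.} Fix a coset $S=x_0\circ N$. The key sub-claim is
$$\sum_{x\in S}|\Cb_N(x)| \le \sum_{n\in N}|\Cb_N(n)|.$$
Swapping the order of summation, the left side equals $\sum_{n\in N}|\{x\in S: n\in\Cb_B(x)\}| = \sum_{n\in N}|S\cap \Cb_B(n)|$, using the symmetry $n\in\Cb_B(x)\iff x\in\Cb_B(n)$. Since $\Cb_B(n)$ is a subgroup of $(B,\circ)$, the set $S\cap\Cb_B(n)$ is either empty or a left coset of $\Cb_B(n)\cap N=\Cb_N(n)$. So $|S\cap \Cb_B(n)|\le |\Cb_N(n)|$, which proves the sub-claim.

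\textbf{Step 3: summing over cosets.} Since $\bar x$ is constant on $S$, Steps 1 and 2 give
$$\sum_{x\in B}|\Cb_B(x)| \le \sum_{S\in \bar B}|\Cb_{\bar B}(S)|\sum_{n\in N}|\Cb_N(n)| = |\bar B|^2\Pb(\bar B)\,|N|^2\Pb(N).$$
Dividing by $|B|^2=|\bar B|^2|N|^2$ yields $\Pb(B)\le \Pb(N)\Pb(B/N)$. Both "in particular" bounds then follow because each factor is at most $1$.

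\textbf{Main obstacle.} The delicate point is that $\Cb_B(x)$ need not be a subgroup of $(B,+)$. All coset and index arguments therefore have to be carried out in $(B,\circ)$. This is legitimate because Proposition \ref{pro:subgroup}'s proof gives the subgroup property in $(B,\circ)$, and $N$, being an ideal, is normal in $(B,\circ)$. The symmetry of the relation $n\in\Cb_B(x)$ should be checked directly from the second description of $\Cb_B$: the condition $x*b=1$ is equivalent to $\lambda_x(b)=b$, and together with $[x,b]^\circ=[x,b]^+=1$ this is symmetric in $x$ and $b$ (using $x\circ b=x+\lambda_x(b)=x+b$ and the commuting of $x$ and $b$ in both groups).
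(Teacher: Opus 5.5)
Your proposal is correct and follows essentially the same Gallagher-style argument as the paper. You bound $|\Cb_B(x)|$ by $|\Cb_{B/N}(x\circ N)|\,|\Cb_N(x)|$ using the quotient map on $(B,\circ)$, swap the order of summation over each coset $b\circ N$, and use that $\Cb_B(y)\cap (b\circ N)$ is a left coset of $\Cb_N(y)$ in $(B,\circ)$. Your observation that this intersection may be empty, so that only $|\Cb_B(y)\cap (b\circ N)|\le |\Cb_N(y)|$ holds, is slightly more careful than the paper: it asserts equality after tacitly choosing an element $z$ in the intersection, though only the inequality is needed.
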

\begin{proof}
We work with group $(B, \circ)$ and the quotient group $(B, \circ)/(N, \circ)$. For the simplicity of notation we suppress the use of `$\circ$' and write $x \circ N = xN$ for a left coset of $N$ in $(B, \circ)$. For the product of two subgroups $N_1$ and $N_2$  of $(B, \circ)$ we simply write $N_1 N_2$.

 We first prove that $({\Cb_{B}(x)  N)/N} \subset \Cb_{B/N}(x  N)$ for all $x\in B$. Let $y \in \Cb_{B}(x)$.  Since $N$ is an ideal of $B$, we know that 
  $x + N = x \circ N$ for all $x \in B$. Thus by an easy computation we get 
 \begin{eqnarray*}
 (y  N) * (x  N) &=& (y*x)  N  = N,\\
  (x  N) * (y  N) &=& (x*y)  N  = N
  \end{eqnarray*}
  and 
  $$[x N, y  N]^{\circ} = [x, y]^{\circ}  N  = N.$$
Hence $({\Cb_{B}(x)  N)/N} \subset \Cb_{B/N}(x N)$.

Note that $N \cap \Cb_B(x) = \Cb_N(x)$ for all $x \in B$. Thus, as $\Cb_B(x)$ and  $\Cb_N(x)$ are subgroups of $(B, \circ)$, we get
$$(\Cb_B(x)  N)/N \cong \Cb_B(x)/(\Cb_B(x) \cap N) = \Cb_B(x)/\Cb_N(x).$$
Now, using the definition of $\Pb(B)$, we have
\begin{align*}
  |B|^2\Pb(B) &=\sum_{x\in B}|\Cb_{B}(x)| =\sum_{b N\in{B/N}} ~ \sum_{x\in b N}|\Cb_{B}(x)|\\
             &=\sum_{ b N\in{B/N}} ~ \sum_{x\in b N}|{(\Cb_{B}(x) N})/N||\Cb_N(x)|\\
             &\leqslant \sum_{b N\in{B/N}} ~ \sum_{x \in bN}|\Cb_{B/N}(x N)| |\Cb_N(x)|\\  
             &= \sum_{b N\in{B/N}}\Big(|\Cb_{B/N}(b N)|\sum_{x\in b N}|\Cb_N(x)|\Big)\\   
             &= \sum_{b N\in{B/N}}\Big(|\Cb_{B/N}(b N)|  \sum_{y\in N}|\Cb_{B}(y)\cap (b N)|\Big).\\                      
      \end{align*}
      Let $z \in   \Cb_B(y) \cap (b N)$.  Then $z N = b N$ as $z \in b  N$. Now, since $\Cb_{B}(y)$ is a subgroup of $(B, \circ)$ and $z \in \Cb_B(y)$, we get
      $$\Cb_{B}(y)\cap (z N) = (z \Cb_{B}(y)) \cap (z N) = z (\Cb_{B}(y)\cap N) = z   \Cb_N(y),$$
     which gives $|\Cb_{B}(y)\cap(b N)| = |\Cb_N(y)|$. By clubbing in these values in the preceding inequality, we get 
    \begin{align*}
        |B|^2\Pb(B) &\leqslant \sum_{bN\in B/N} \Big(|\Cb_{B/N}(b  N)| \sum_{y\in N}|\Cb_{N}(y)|\Big)\\
        &= \Big(\sum_{bN\in B/N} |\Cb_{B/N}(b  N)| \Big) \Big(\sum_{y\in N}|\Cb_{N}(y)|\Big),
    \end{align*}
      which, by the definition of the commuting probability,  can be written as
     $$|B|^2\Pb(B)\leqslant \big(|B/N|^2\Pb(B/N)\big)\big(|N|^2\Pb(N)\big).$$   
     Hence   $\Pb(B)\leqslant \Pb(N)\Pb(B/N)$ and the proof is complete. 
\end{proof}

We conclude this section by generalising one of the assertions of the preceding result to sub-skew braces.
\begin{pro}
    Let $H$ be a sub-skew brace of a finite skew brace $B$. Then $\Pb(B) \leqslant \Pb(H)$.
\end{pro}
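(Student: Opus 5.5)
We adapt Gallagher's argument for groups, following the template of Theorem \ref{Ideal and cbd}, but working with cosets of $H$ in $(B,\circ)$ rather than with a quotient. The starting point is the identity $|B|^2\Pb(B)=\sum_{x\in B}|\Cb_B(x)|$. Since $\Cb_B(x)$ is symmetric in its defining conditions ($b\in\Cb_B(x)$ iff $x\in\Cb_B(b)$), the same count can be written as $\sum_{y\in B}|\Cb_B(y)|$. We split it as a sum over $x$ and $y$ in different regions, or more simply we count pairs by their first coordinate and use the subgroup structure.

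\textbf{Step 1.} For each $x\in B$, note that $\Cb_H(x):=\Cb_B(x)\cap H$, and that $\Cb_B(x)$ is a subgroup of $(B,\circ)$ by Proposition \ref{pro:subgroup} and the discussion preceding it. Since $H$ is also a subgroup of $(B,\circ)$, we get
$$[\Cb_B(x):\Cb_B(x)\cap H]\leqslant [(B,\circ):H].$$
Hence $|\Cb_B(x)|\leqslant [B:H]\,|\Cb_B(x)\cap H|$ for every $x\in B$.

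\textbf{Step 2.} Summing over $x\in B$ and swapping the order of counting gives
$$|B|^2\Pb(B)\leqslant [B:H]\sum_{y\in H}|\Cb_B(y)|.$$
This uses symmetry: $|\{(x,y): x\in B,\ y\in H\cap\Cb_B(x)\}|=\sum_{y\in H}|\Cb_B(y)|$.

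\textbf{Step 3.} For $y\in H$, apply the same index bound in the other direction. We have $\Cb_H(y)=\Cb_B(y)\cap H$ and
$$|\Cb_B(y)|\leqslant [B:H]\,|\Cb_B(y)\cap H|=[B:H]\,|\Cb_H(y)|.$$
Here we must check that the brace centraliser of $y$ computed inside $H$ coincides with $\Cb_B(y)\cap H$. This holds because the operations $*$, $[\,,]^+$ and $[\,,]^\circ$ on $H$ are the restrictions of those on $B$.

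\textbf{Step 4.} Combining the steps,
$$|B|^2\Pb(B)\leqslant [B:H]^2\sum_{y\in H}|\Cb_H(y)|=[B:H]^2|H|^2\Pb(H)=|B|^2\Pb(H),$$
which gives $\Pb(B)\leqslant\Pb(H)$.

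The main point to verify is that Step 1 only needs $\Cb_B(x)$ to be a subgroup of $(B,\circ)$. It does not need $\Cb_B(x)$ to be a subgroup of $(B,+)$, which may fail, as the example [8,18] shows. The index inequality $|K:K\cap H|\leqslant|G:H|$ for subgroups $K,H$ of a finite group $G$ is purely multiplicative, so Proposition \ref{pro:subgroup} suffices. The rest is bookkeeping.
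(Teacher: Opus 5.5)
Your proposal is correct and is essentially the paper's own proof. You use the same three steps: the bound $|\Cb_B(x)|\leqslant [B:H]\,|\Cb_H(x)|$, which comes from $\Cb_B(x)$ and $H$ both being subgroups of $(B,\circ)$; the double-counting swap $\sum_{x\in B}|\Cb_H(x)|=\sum_{y\in H}|\Cb_B(y)|$, which uses the symmetry of the brace centraliser; and a second application of the same bound. The only cosmetic difference is that the paper proves the index inequality with an explicit coset-representative argument, whereas you quote $[K:K\cap H]\leqslant[G:H]$ directly.
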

\begin{proof}
  For any $x \in B$, note that  $\Cb_H(x) \subseteq \Cb_B(x)$ are subgroups of $(B, \circ)$.  Let  $x \in B$ such  $[\Cb_B(x) :\Cb_H(x)] = k$ for some integer $k$, and  $\{b_i\}_{i=1}^{k}$ be a set of left coset representatives  of $\Cb_H(x)$ in $\Cb_B(x)$ as subgroup in $(B, \circ)$. It follows that $b_l\circ H \ne  b_m \circ H$ for $l, m \in \{1, \ldots,k\}$ with $l\neq m$. For, if  $b_l\circ H=b_m\circ H$, then ${b_l}^{-1} \circ b_m\in H \cap \Cb_B(x) = \Cb_H(x)$, a contradiction. Hence
    $[(\Cb_B(x),\circ),(\Cb_H(x): \circ)]\leqslant [(B,\circ):(H,\circ)]$, which gives
    $$|\Cb_B(x)|=[\Cb_B(x) :  \Cb_H(x)] |\Cb_H(x)| \leqslant [(B,\circ):(H,\circ)] |\Cb_H(x)|.$$
   Now
    \begin{align*}
        \sum_{x\in B}|\Cb_B(x)| & \leqslant [(B,\circ):(H,\circ)]\sum_{x\in B}|\Cb_H(x)|\\
        &= [(B,\circ):(H,\circ)]\sum_{y\in H}|\Cb_B(y)|\\
        & \leqslant {[(B,\circ):(H,\circ)]}^2\sum_{y\in H}|\Cb_H(y)|.
    \end{align*}
 Using this, we finally get
        $$\Pb(B)=\frac{\sum_{x\in B}|\Cb_B(x)|}{|B|^2}\leqslant \frac{{[(B,\circ):(H,\circ)]}^2\sum_{y\in H}|\Cb_H(y)|}{|B|^2}=\Pb(H),$$
        which completes the proof.
\end{proof}


\section{Nilpotency, Isoclinism and Commuting Probability}

We first study the impact of the commuting probability on the nilpotency of skew braces. As we have seen above, if $\Pb(B) > \frac{3}{4}$, then $B$ is a trivial brace. We here show that a skew brace $B$ is nilpotent if  $\Pb(B) > \frac{65}{128}$.  We start with a very basic observation on a skew brace $B$ with $|\Gamma_2(B)| = 2$. 

\begin{pro}
 Let $(B, +, \circ)$ be a skew brace such that  $|\Gamma_2(B)| = 2$. Then $B$ is a  nilpotent skew brace of nilpotency class $2$.
\end{pro}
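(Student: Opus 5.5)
The plan is to show that $\Gamma_2(B) \subseteq \Ann(B)$. This gives $\Gamma_3(B)=1$, so by \cite[Theorem 2.8]{BJ23} $\Ann_2(B)=B$ and $B$ is nilpotent of class at most $2$. Since $\Gamma_2(B)\neq 1$, we have $\Ann(B)\ne B$, so the class is exactly $2$.

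Write $\Gamma_2(B)=\{1,z\}$. Recall that $\Gamma_2(B)$ is an ideal of $B$. I will check the three conditions defining $\Ann(B)$ in turn.

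First, $z\in \Z(B,+)$. Since $(\Gamma_2(B),+)$ is a normal subgroup of $(B,+)$ of order $2$, it is central in $(B,+)$. The same argument in $(B,\circ)$ shows $(\Gamma_2(B),\circ)$ is normal of order $2$, hence $z\in \Z(B,\circ)$.

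Next, $z\in\Ker(\lambda)$, i.e.\ $z*b=1$ for all $b$. We have $z*b=-z+z\circ b-b\in \Gamma_2(B)$. Suppose $z*b=z$ for some $b$. Then $\lambda_z(b)=z+b$. Now $\lambda_z$ is an automorphism of $(B,+)$ with $\lambda_z^2=\lambda_{z\circ z}=\lambda_1=\mathrm{id}$, since $z\circ z=1$. We also have $\lambda_z(z)=z$, because $\lambda_z$ preserves the ideal $\Gamma_2(B)$ and fixes $1$. Hence
\[
b=\lambda_z(\lambda_z(b))=\lambda_z(z+b)=\lambda_z(z)+\lambda_z(b)=z+z+b=b,
\]
which is no contradiction. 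So I need a different route. Consider the map $\varphi:(B,+)\to \Gamma_2(B)$ given by $b\mapsto z*b=-b+\lambda_z(b)$, using that $z$ is additively central. Since $\lambda_z(b)-b$ is central, $\varphi$ is a homomorphism on $(B,+)$, but this alone does not force it to be trivial.

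Instead I would use $b*z\in\Gamma_2(B)$ together with $b\circ z=b+\lambda_b(z)=b+z$, because $\lambda_b(z)\in\Gamma_2(B)\setminus\{1\}$. Thus $z\circ b=b\circ z=b+z=z+b$, so $-z+z\circ b-b=-z+z+b-b=1$. This gives $z*b=1$, so $z\in\Ker(\lambda)$.

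The main obstacle was this kernel condition, and the key is to combine multiplicative centrality of $z$ with $\lambda_b(z)=z$; the remaining steps are immediate. Having $z\in\Ker(\lambda)\cap\Z(B,+)\cap\Z(B,\circ)=\Ann(B)$, the argument of the first paragraph completes the proof.
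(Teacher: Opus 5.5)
Your proposal is correct and takes essentially the paper's approach: the paper checks that the generators $a*z$, $z*a$, $[a,z]^+$ of $\Gamma_3(B)$ are trivial by an unstated ``easy computation'', and your proof that $z\in\Ann(B)$ supplies exactly that computation, using that order-$2$ normal subgroups are central and that $\lambda_b(z)=z$ by the left-ideal property and injectivity of $\lambda_b$. You also justify that the class is exactly $2$, which the paper leaves implicit; the abandoned detour via $\lambda_z^2=\mathrm{id}$ should simply be deleted.
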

\begin{proof}
   Let $\Gamma_2(B) = \{1, b\}$. Then the set $\{a*b, b*a, a+b-a-b \mid a, \in B\}$  generates $\Gamma_3(B)$.  An easy computation shows that all elements of the generating set of $\Gamma_3(B)$ must be the identity element, proving that $\Gamma_3(B) =\{1\}$. This completes the proof.
\end{proof}


\begin{lem} \label{two-sided pn centrally nilpotent}  
    Let $(B, +, \circ)$ be a two-sided skew brace of order $p^n$. Then $B$ is a nilpotent skew brace.
\end{lem}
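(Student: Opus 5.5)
The plan is to show that every non-trivial two-sided skew brace of $p$-power order has a non-trivial annihilator, and then to induct on $n$ along the annihilator series. For the inductive step, note that if $B$ is two-sided then so is $B/\Ann(B)$: both distributive laws pass to quotients by ideals. Also $|B/\Ann(B)| = p^m$ with $m<n$ whenever $\Ann(B)\neq 1$. By induction $\Ann_k(B/\Ann(B)) = B/\Ann(B)$ for some $k$. By the defining relation $\Ann_j(B)/\Ann(B) = \Ann_{j-1}(B/\Ann(B))$, this gives $\Ann_{k+1}(B)=B$. So everything reduces to proving $\Ann(B)\neq 1$ when $|B| = p^n>1$.

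For that, I will use a fixed-point argument for a $p$-group acting on $B$. Consider three families of permutations of $B$, all lying in $\Aut(B,+)$:
\begin{itemize}
\item the maps $\lambda_a$, for $a\in B$;
\item the inner automorphisms $\iota_b\colon x\mapsto b+x-b$ of $(B,+)$;
\item the conjugations $c_a\colon x\mapsto a\circ x\circ a^{-1}$ of $(B,\circ)$.
\end{itemize}
That $c_a$ lies in $\Aut(B,+)$, and in fact is a brace automorphism, is exactly the two-sidedness input already used in the proof that $\FCI(B)$ is an ideal of a two-sided brace.

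Let $H$ be the group these permutations generate. I will check that $H$ is a $p$-group via the normalisation relations below. The first follows from $\lambda_a\in\Aut(B,+)$; the second holds because $c_a$ is a brace automorphism, so it commutes appropriately with both $\lambda$ and $+$:
$$\lambda_a\iota_b\lambda_a^{-1}=\iota_{\lambda_a(b)}, \qquad c_a\lambda_b c_a^{-1}=\lambda_{c_a(b)}, \qquad c_a\iota_b c_a^{-1}=\iota_{c_a(b)}.$$
Thus $\mathrm{Inn}(B,+)$ is normalised by $\lambda(B)$, so $K=\mathrm{Inn}(B,+)\lambda(B)$ is a $p$-group. Next, $K$ is normalised by $\mathrm{Inn}(B,\circ)$, so $H=K\,\mathrm{Inn}(B,\circ)$ is a $p$-group, being a product of $p$-groups one of which is normal. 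Since $|B|=p^n$ and $1$ is fixed by $H$, the orbit equation gives $|\Fix(H)|\equiv 0 \pmod p$, hence $|\Fix(H)|\ge p$.

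Finally I identify $\Fix(H)$ with $\Ann(B)$. Let $x$ be fixed by $H$. Being fixed by all $\iota_b$ and $c_a$ means $x\in \Z(B,+)\cap\Z(B,\circ)$. Being fixed by all $\lambda_a$ means $a*x=1$, i.e. $a\circ x=a+x$ for all $a$. It remains to see $x\in\Ker(\lambda)$. For any $a$,
$$\lambda_x(a) = -x + x\circ a = -x + a\circ x = -x + a + x = a,$$
using $\circ$-centrality and then $+$-centrality of $x$. So $x\in\Ker(\lambda)\cap\Z(B,+)\cap\Z(B,\circ)=\Ann(B)$. Conversely, annihilator elements are clearly fixed by $H$. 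Hence $|\Ann(B)|\ge p$, completing the induction.

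The main obstacle I expect is justifying that each $c_a$ is an automorphism of $(B,+)$ commuting with $\lambda$ in the sense $c_a\lambda_b c_a^{-1}=\lambda_{c_a(b)}$. I would first try to cite the standard fact that in a two-sided skew brace inner automorphisms of $(B,\circ)$ are brace automorphisms. If needed, I would derive it from the right distributive law: $(x+y)\circ a = x\circ a - a + y\circ a$ together with the left law gives additivity of $x\mapsto a^{-1}\circ x\circ a$. The relation with $\lambda$ then follows formally from $c_a$ preserving both operations.
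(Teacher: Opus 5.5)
Your proof is correct, and its skeleton matches the paper's. You obtain $\Ann(B)\neq 1$ and then induct along the annihilator series, using that $B/\Ann(B)$ is again two-sided of smaller $p$-power order.

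The difference lies in the key step. The paper imports $\Ann(B)\neq 1$ from \cite[Proposition 3.22]{LV23} and says the induction ``easily follows''. You instead prove it from scratch: the $p$-group $H=\mathrm{Inn}(B,+)\,\lambda(B)\,\mathrm{Inn}(B,\circ)\leq\Aut(B,+)$ acts on $B$, and you show $\Fix(H)\subseteq\Ann(B)$. The computation $\lambda_x(a)=-x+a\circ x=-x+a+x=a$ is exactly what upgrades ``fixed by $H$'' to $x\in\Ker(\lambda)$. The additivity of $c_a$ does follow from the two distributive laws as you sketch; the middle terms collapse because $(-a)\circ a^{-1}=a^{-1}+a^{-1}$.

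The citation route is shorter. Yours is self-contained and pinpoints the only place two-sidedness enters: putting $\mathrm{Inn}(B,\circ)$ inside $\Aut(B,+)$ so that $H$ is a $p$-group. That hypothesis cannot be dropped, given the non-nilpotent skew brace $(8,18)$.

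One cosmetic point: $\Ann_j(B)/\Ann(B)=\Ann_{j-1}(B/\Ann(B))$ is not literally the defining relation. It follows from it by induction on $j$ via the third isomorphism theorem.
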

\begin{proof}
    It follows from \cite[Proposition 3.22]{LV23} that $\Ann(B)$ is a non-trivial ideal of $B$. Hence, it easily follows by an inductive argument that $B$ is a nilpotent skew brace.
\end{proof}

As we observed above, every skew brace whose multiplicative group is abelian is two-sided. Thus as an immediate consequence of the preceding result we get
\begin{cor}\label{p2 centrally nilpotent}
Any skew brace of order $p^2$ is nilpotent.
\end{cor}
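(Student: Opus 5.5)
The plan is to reduce to Lemma \ref{two-sided pn centrally nilpotent}: it suffices to show that the multiplicative group $(B, \circ)$ of a skew brace $B$ of order $p^2$ is abelian, since then $B$ is two-sided, as remarked in the preliminaries.

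The key step is the classical group-theoretic fact that every group of order $p^2$ is abelian. I would recall its proof briefly. A nontrivial $p$-group has nontrivial center, by the class equation, so $|\Z(B, \circ)| \in \{p, p^2\}$. If $|\Z(B,\circ)| = p$, then $(B,\circ)/\Z(B,\circ)$ is cyclic of order $p$. This forces $(B,\circ)$ to be abelian, and hence $\Z(B,\circ) = B$, a contradiction. Thus $(B,\circ)$ is abelian. Notice that the additive group plays no role here; we only need the order of $(B,\circ)$, which is $|B| = p^2$.

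With $(B, \circ)$ abelian, $B$ is a two-sided skew brace of order $p^2$. Lemma \ref{two-sided pn centrally nilpotent} (with $n = 2$) then gives that $B$ is nilpotent. Concretely, that lemma supplies a nontrivial annihilator, and one passes to the quotient $B/\Ann(B)$ of order at most $p$, where the quotient is again two-sided.

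There is no real obstacle in this argument. The only point to check is that two-sidedness is what the lemma requires, and that it follows from commutativity of $(B, \circ)$. This was already noted in the paper: if $(B,\circ)$ is abelian, right distributivity is left distributivity read in the opposite order.
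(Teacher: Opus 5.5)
Your proposal is correct and follows essentially the same route as the paper. The paper also derives the corollary from the preceding lemma on two-sided skew braces of order $p^n$, via the observation that a skew brace with abelian multiplicative group is two-sided. You additionally make explicit the fact, left implicit in the paper, that every group of order $p^2$ is abelian.
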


There do exist skew braces of prime power order which are not nilpotent. One such example of order $p^3$ for $p=2$  is  $(8,18)$ in GAP library (see \cite{GAP, VK22}). There are total 5 skew braces of order 8, namely (8, 2), (8, 3), (8, 18), (8, 19), (8, 47),  which are not two-sided. It turns out that $Pb(B) \le \frac{1}{2}$ for all skew braces $B$ mentioned in the preceding sentence.  For more knowledge on non-nilpotent braces of prime power order see \cite{SM25, DP24}.

\begin{cor}
Let $(B,+, \circ)$ be a skew brace of order $p^2$. Then $\Pb(B)=1$ or $\Pb(B) =\frac{2p-1}{p^2}$.
\end{cor}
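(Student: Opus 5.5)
The plan is to pin down the index $d = |B/\Ann(B)|$, which divides $p^2$, so $d \in \{1, p, p^2\}$. If $d=1$, then $B=\Ann(B)$. Every pair of elements then brace-commutes, and so $\Pb(B)=1$. If $d=p$, Proposition \ref{thm:  prime fixed}(1) gives $\Pb(B)=\frac{2p-1}{p^2}$ directly. So the whole statement reduces to ruling out $d=p^2$, that is, showing $\Ann(B)\neq 1$ whenever $B$ is not all of $\Ann(B)$.

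To exclude $d = p^2$, I would invoke Corollary \ref{p2 centrally nilpotent}: every skew brace of order $p^2$ is nilpotent. Nilpotency means $\Ann_n(B)=B$ for some $n$. If $\Ann(B)=\Ann_1(B)$ were trivial, the recursive definition $\Ann_n(B)/\Ann_{n-1}(B)=\Ann(B/\Ann_{n-1}(B))$ would make every $\Ann_n(B)$ trivial. That contradicts $B\neq 1$. Hence $|\Ann(B)|\ge p$, so $d\le p$, and we are in one of the two cases already handled.

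Alternatively, one could argue via Lemma \ref{two-sided pn centrally nilpotent}. The multiplicative group of order $p^2$ is abelian, so $B$ is two-sided, and \cite[Proposition 3.22]{LV23} gives a non-trivial $\Ann(B)$ directly.

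The only step needing care is that nilpotency forces a non-trivial annihilator. This is immediate from the definition of the annihilator series, so there is no real obstacle. The rest is bookkeeping with the already established value $\frac{2p-1}{p^2}$ for $d=p$.
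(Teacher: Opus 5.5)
Your proof is correct and follows essentially the same route as the paper. The paper also uses the nilpotency of skew braces of order $p^2$ to get $\Ann(B)\neq 1$, so $|\Ann(B)|\in\{p,p^2\}$; it then obtains $\Pb(B)=1$ in the first case and $\Pb(B)=\frac{2p-1}{p^2}$ from part (1) of the proposition on $|B/\Ann(B)|=p$ in the second. Your explicit check that a trivial $\Ann(B)$ would make the whole annihilator series trivial supplies the one step the paper leaves implicit.
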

\begin{proof}
    By Corollary \ref{p2 centrally nilpotent}, $B$ is a nilpotent skew brace. So $\Ann(B)\neq\{1\}$, which implies that  $|\Ann(B)|$ is $p$ or $p^2$.  If $|\Ann(B)|=p^2$, then $\Pb(B)=1$. If $|\Ann(B)|=p$, then by Proposition \ref{thm: prime fixed}, $\Pb(B)=\frac{2p-1}{p^2}$. 
\end{proof}

 We  remark that a finite skew brace $B$ is nilpotent if and only of $B/\Ann(B)$ is nilpotent. 
If $B$ is a finite skew brace which is not nilpotent, then there exists a positive  integer $m$ such that $\Ann(B/\Ann_m(B)) = 1$. By Theorem \ref{Ideal and cbd} we know that $\Pb(B) \leqslant \Pb(B/\Ann_m(B))$.

 We are now ready to prove
 
 \begin{thm}   \label{13/14 nilpotent}
    If B is a finite skew brace such that $\Pb(B) > \frac{65}{128}$, then $B$ is nilpotent.
\end{thm}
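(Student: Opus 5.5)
We argue by contraposition: assume $B$ is a finite skew brace that is not nilpotent, and show $\Pb(B)\leqslant \frac{65}{128}$. As remarked just before the statement, there is an integer $m$ with $\bar B := B/\Ann_m(B)$ satisfying $\Ann(\bar B)=1$ and $\bar B \neq 1$. By Theorem \ref{Ideal and cbd}, $\Pb(B)\leqslant \Pb(\bar B)$. It therefore suffices to prove that every finite non-trivial skew brace $C$ with $\Ann(C)=1$ has $\Pb(C)\leqslant \frac{65}{128}$. For such $C$ we have $d=|C|$.

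For such a $C$, formula \eqref{exp-Pb} gives
\[
|C|^2\Pb(C)=|C|+\sum_{x\neq 1}|\Cb_C(x)|.
\]
By Proposition \ref{pro:subgroup}, each $\Cb_C(x)$ with $x\neq 1$ is a proper subgroup of $(C,\circ)$, so its order is at most $|C|/2$.

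\textbf{Case $|C|$ odd.} Every proper subgroup has index at least $3$. Then Proposition \ref{thm: prime fixed}(2) with $p\geqslant 3$ gives $\Pb(C)\leqslant \frac{p+d-1}{pd}\leqslant \frac{1}{3}+\frac{2}{3d}$. Since $d\geqslant 3$, this is at most $\frac{5}{9}$. We also need to exclude $d=3$: a brace of order $3$ is trivial, so it has $\Ann=C$. Hence $d\geqslant 5$, and the bound gives at most $\frac{7}{15}<\frac{65}{128}$.

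\textbf{Case $|C|$ even.} Proposition \ref{thm:upper and lower} gives $\Pb(C)\leqslant \frac{d+1}{2d}=\frac12+\frac{1}{2d}$. This is at most $\frac{65}{128}$ exactly when $d\geqslant 64$. So the whole difficulty is the small even orders $d<64$. Here I would use Proposition \ref{thm: prime fixed}(3) and (5) to sharpen the bound. If $|C|$ is not a power of $2$, part (3) with $\Ann(C)=1$ gives
\[
\Pb(C)\leqslant \frac12+\frac{1}{2|C|}\Big(\frac{2}{s}-1\Big)\cdot(\text{small}) \leqslant \frac12,
\]
since $s\geqslant 3$ makes the correction term negative. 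If $|C|=2^n$ and $(C,\circ)$ is not elementary abelian, part (5) gives $\Pb(C)\leqslant \frac12+\frac{1}{2^{n+2}}$. For $n\geqslant 5$ this is at most $\frac12+\frac1{128}=\frac{65}{128}$; this is evidently where the constant comes from.

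The remaining subcases are the main obstacle:
\begin{enumerate}
\item $|C|=2^n$ with $n\leqslant 4$;
\item $(C,\circ)$ elementary abelian of order $2^n$.
\end{enumerate}
In the second subcase, $(C,\circ)$ is abelian, so $C$ is two-sided and hence nilpotent by Lemma \ref{two-sided pn centrally nilpotent}. That forces $\Ann(C)\neq 1$, a contradiction. More generally, Lemma \ref{two-sided pn centrally nilpotent} rules out any $2$-power-order $C$ with $(C,\circ)$ abelian.

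For the first subcase with $n\leqslant 4$, I would refine the count in (5). If $(C,\circ)$ is nonabelian of order $8$ or $16$, then at most a few centralisers can be maximal: the intersection of the maximal centralisers, together with the Frattini subgroup, must meet trivially in $\Ann(C)=1$. Counting how many $x$ force $|\Cb_C(x)|\leqslant |C|/4$ should give $\Pb(C)\leqslant \frac{65}{128}$. If this counting gets delicate, I would fall back on the paper's GAP-style verification, already invoked for order $8$ where all non-two-sided braces have $\Pb\leqslant\frac12$, and extend it to order $16$.
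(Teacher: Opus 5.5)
Your reduction to $C=B/\Ann_m(B)$ with $\Ann(C)=1$ is correct. So are your odd-order case and the case $|C|=2^n$ with $n\geqslant 5$; the latter is exactly how the paper handles $d=32$.

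The genuine gap is the even, non-$2$-power case. Part (3) of Proposition~\ref{thm: prime fixed} does not give $\Pb(C)\leqslant\frac12$. With $p=2$ and $|\Ann(C)|=1$ it reads $\Pb(C)\leqslant\frac12+\frac{1}{s|C|}$. You dropped the $\frac{1}{2|C|}$ in the basic bound $\frac12+\frac{1}{2|C|}$; lowering a single centraliser from order $|C|/2$ to $|C|/s$ only subtracts $(\frac12-\frac1s)/|C|$ from that bound. The result is at most $\frac{65}{128}$ only when $s|C|\geqslant 128$. So it settles $|C|=48$ (this is the paper's $\frac{73}{144}$), but leaves $|C|\in\{6,10,12,14,18,20,22,24,26,28,30,36,42\}$ open; for $|C|=6$ it gives only $\frac59$. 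Order $16$ is also unfinished: part (5) gives only $\frac12+\frac1{64}$, and your refinement is only a sketch. The paper disposes of all these small orders (every $d\leqslant 63$ except $32$ and $48$) by a GAP search over non-nilpotent skew braces. As it stands, your argument needs that computation for many more orders than $16$.

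The idea you gesture at does close the gap without a computer, if you replace the Frattini subgroup by the intersection $K$ of all index-$2$ subgroups of $(C,\circ)$. First, $K\neq 1$: if $|C|$ is not a power of $2$, $K$ contains every element of odd order. Otherwise $K$ is the Frattini subgroup of the $2$-group $(C,\circ)$, which is non-abelian by your two-sidedness argument. Let $E$ be the set of $x\neq 1$ with $[(C,\circ):\Cb_C(x)]\geqslant 3$. Every $x\notin E\cup\{1\}$ has index exactly $2$, so $K\subseteq\Cb_C(x)$. By symmetry ($y\in\Cb_C(x)$ if and only if $x\in\Cb_C(y)$), this gives $C\setminus E\subseteq\Cb_C(z)$ for each $1\neq z\in K$. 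Since $z\notin\Ann(C)$, we have $|\Cb_C(z)|\leqslant|C|/2$, hence $|E|\geqslant|C|/2$. Now $|C|^2\Pb(C)\leqslant|C|+(|C|-1-|E|)\frac{|C|}{2}+|E|\frac{|C|}{3}$ yields $\Pb(C)\leqslant\frac{5}{12}+\frac{1}{2|C|}\leqslant\frac12$, because $|C|\geqslant 6$ (orders $2$ and $4$ force $\Ann(C)\neq 1$). Combined with your odd case, this shows that $\Pb(B)>\frac12$ already forces nilpotency. That bound is attained by the non-nilpotent skew brace $(S_3,+,+_{op})$, which has trivial annihilator and $\Pb=\frac12$.
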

\begin{proof}
 Since $\Pb(B)> \frac{65}{128}$, it follows from Proposition \ref{thm:upper and lower} that $d \in \{1, 2, \ldots, 63\}$. A GAP search for non-nilpotent skew braces of orders lying in the set $\{1, \ldots, 63\} - \{32, 48\}$ shows that $\Pb(B) \le \frac{1}{2}$. As remarked above, a finite skew brace $B$ is nilpotent if and only of $B/\Ann(B)$ is nilpotent. So assume that $B$ is a non-nilpotent skew brace such that $B/\Ann(B)$ is of order 32 or 48. Let $|B/\Ann(B)| = 32$. Then $(B, \circ)$ can not be abelian as, otherwise, $B$ will be a two sided skew brace, and therefore nilpotent.  If $|\Ann(B/\Ann(B))| = 1$, then, by Proposition \ref{thm:  prime fixed} (5), $\Pb(B/\Ann(B)) \leqslant  \frac{1}{2} + \frac{1}{128} = \frac{65}{128}$. If $|\Ann(B)| > 1$, then $\Pb(B) \le \Pb(B/\Ann(B)) \leqslant \frac{1}{2}$. Now take  $|B/\Ann(B)| = 48$. If $|\Ann(B/\Ann(B))| = 1$, then, by Proposition \ref{thm:  prime fixed} (3), $\Pb(B/\Ann(B)) \leqslant  \frac{73}{144} < \frac{65}{128}$. If  $|\Ann(B)| > 1$, then, again,  $\Pb(B) \le \Pb(B/\Ann(B)) \leqslant \frac{1}{2}$. The proof is complete.
\end{proof}

We now turn attention to isoclinism of skew braces and prove that the commuting probability of skew braces is invariant under isoclinism.
We begin  by defining the concept of isoclinism of skew  braces introduced in \cite{LV23}  (The concept of isoclinism of groups was introduced by P. Hall \cite{Hall40}). For a skew  brace $B$, it is easy to check that the following maps are well defined:
\begin{align*}
\phi^B_{+}&:(B/{\Ann B})^2\to \Gamma_2(B), \hspace{1.5cm}(\bar{a},\bar{b})\mapsto [a,b]^{+},\\
\phi^B_\ast&:(B/{\Ann B})^2\to \Gamma_2(B),\hspace{1.5cm}(\bar{a},\bar{b})\mapsto a\ast b,
\end{align*}
where $\bar{a}, \bar{b} \in B/\Ann(B)$. 
We say that two skew  braces $A$ and $B$ are {\em isoclinic} if there exist  brace isomorphisms $\xi : A/{\Ann A} \to B/{\Ann B}$ and $\theta : \Gamma_2(A) \to \Gamma_2(B)$ such that the diagram
\begin{equation}\label{dia:isoclinism}
\begin{tikzcd}
\Gamma_2(A) \arrow[d,"\theta"] &(A/{\Ann A})^2\arrow[l,"\phi_{+}^A"']\arrow[r,"\phi_\ast^A"]\arrow[d,"\xi \times \xi"] &\Gamma_2(A) \arrow[d,"\theta"]\\
\Gamma_2(B) &(B/{\Ann B})^2\arrow[l,"\phi_{+}^B"]\arrow[r,"\phi_\ast^B"'] & \Gamma_2(B)
\end{tikzcd}
\end{equation}
commutes. The pair $(\xi,\theta)$ is called a  {\em skew  brace  isoclinism}. It is not difficult to see that isoclinism is an equivalence relation in the category of all skew left braces.  A skew brace $B$ is said to be a {\it stem skew brace} is $ \Ann(B)  \subseteq \Gamma_2(B) $. That each isoclinism class (or family) admits a stem skew brace is proved in \cite[Theorem 2.18]{LV23}.

 We now prove that commuting probability is invariant under isoclinism of finite skew braces.   An analogous result for finite groups was proved in \cite{PL95}.
\begin{thm}\label{thm: isoclinism invariant}
Let $A$ and $B$ be two isoclinic skew braces of finite order. Then
$\Pb(A)=\Pb(B)$
and
$|A^\prime\cap \Ann(A)|=|B^\prime\cap \Ann(B)|$.
\end{thm}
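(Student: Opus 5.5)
The plan follows Lescot's group-theoretic proof, working modulo the annihilator. For $x\in B$, the condition $y\in\Cb_B(x)$ depends only on the cosets $\bar x,\bar y\in B/\Ann(B)$. Indeed, $x*y$, $y*x$ and $[x,y]^{+}$ are the values $\phi^B_*(\bar x,\bar y)$, $\phi^B_*(\bar y,\bar x)$ and $\phi^B_+(\bar x,\bar y)$ of the well-defined maps in the isoclinism diagram \eqref{dia:isoclinism}, and the condition $[x,y]^{\circ}=1$ is already implied by the other two, as noted after the definition of $\Cb_B(x)$. Hence, writing $\bar B=B/\Ann(B)$ and
$$T_B=\{(\bar x,\bar y)\in \bar B^2 \mid \phi^B_*(\bar x,\bar y)=\phi^B_*(\bar y,\bar x)=\phi^B_+(\bar x,\bar y)=1\},$$
every pair in $T_B$ lifts to exactly $|\Ann(B)|^2$ commuting pairs in $B$. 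Therefore
$$\Pb(B)=\frac{|\Ann(B)|^2\,|T_B|}{|B|^2}=\frac{|T_B|}{|\bar B|^2}.$$

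Next, take an isoclinism $(\xi,\theta)$. Since $\theta$ is a bijection with $\theta(1)=1$, commutativity of the diagram gives $(\xi\times\xi)(T_A)=T_B$, because $\phi^B_\ast(\xi\bar a,\xi\bar b)=\theta(\phi^A_\ast(\bar a,\bar b))$, and likewise for $\phi_+$ and for the swapped pair. Moreover $|\bar A|=|\bar B|$. It follows that $\Pb(A)=\Pb(B)$. This part is essentially routine.

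For the second claim, read $A'$ as $\Gamma_2(A)$, since $|\Gamma_2(A)|=|\Gamma_2(B)|$ through $\theta$. It then suffices to show that $|\Gamma_2(A)\Ann(A)/\Ann(A)|=|\Gamma_2(B)\Ann(B)/\Ann(B)|$, because
$$|\Gamma_2\cap\Ann|=\frac{|\Gamma_2|}{|\Gamma_2\Ann/\Ann|}.$$
I will identify $\Gamma_2(B)\Ann(B)/\Ann(B)$ with $\Gamma_2(\bar B)$. This holds since $\Gamma_2$ is generated by the elements $a*b$ and $[a,b]^+$, and the quotient map onto $\bar B$ is a brace homomorphism, so it sends the generators of $\Gamma_2(B)$ onto the generators of $\Gamma_2(\bar B)$. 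Because $\xi$ is a brace isomorphism $\bar A\to\bar B$, it carries $\Gamma_2(\bar A)$ onto $\Gamma_2(\bar B)$, and the orders agree.

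The main obstacle is the first step. One must check that $\phi_*$ is well defined in both arguments, so that replacing $x$ by $x+z$ with $z\in\Ann(B)$ does not change $x*y$ or $y*x$, and that the centraliser condition really reduces to the three conditions $x*y=y*x=[x,y]^+=1$. Well-definedness is asserted in the paper when the isoclinism maps are introduced, so I would cite it. I would also recheck the identity $y*x=1\iff [x,y]^{\circ}=1$, given $x*y=[x,y]^+=1$, by expanding $x\circ y=x+\lambda_x(y)$.
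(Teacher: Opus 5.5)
Your proposal is correct and takes essentially the same route as the paper. The paper likewise counts commuting pairs in $B/\Ann(B)$ through $\phi_+$ and $\phi_\ast$, transfers them via the commutative isoclinism diagram, and gets the second claim from $\Gamma_2(B/\Ann(B))\cong\Gamma_2(B)/(\Gamma_2(B)\cap\Ann(B))$ together with $|\Gamma_2(A)|=|\Gamma_2(B)|$. Your explicit checks are details the paper leaves implicit: that the commuting condition depends only on cosets modulo $\Ann(B)$, and that $[x,y]^{\circ}=1$ is redundant once $x\ast y=[x,y]^{+}=1$.
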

\begin{proof} 
Let $(\xi,\theta)$ be an isoclinism between $A$ and $B$. Since $A/\Ann(A) \cong B/\Ann(B)$, for the first assertion we only need to prove that $|A/\Ann(A)|^2\Pb(A) =|B/\Ann(B)|^2\Pb(B).$
First we expand the right hand side. 
      \begin{align*}
            |B/\Ann(B)|^2\Pb(B) &=\frac{1}{|\Ann(B)|^2}|\{(x, y) \in B \mid [x, y]^+ = x\ast y = y\ast x =1\}|\\
                               &=\frac{1}{|\Ann(B)|^2}|\{(x, y) \in B \mid \phi_+^B(\bar{x},\bar{y})=\phi_\ast ^B(\bar{x},\bar{y})=\phi_\ast ^B(\bar{y},\bar{x})=1\}|\\
                               &=|\{(\bar{x},\bar{y}) \in B/\Ann(B)\mid \phi_+^B(\bar{x},\bar{y})=\phi_\ast ^B(\bar{x},\bar{y})=\phi_\ast ^B(\bar{y},\bar{x})=1\}|.
      \end{align*}

      Similarly,
      $$|A/\Ann(A)|^2\Pb(A)=|\{(\bar{x}, \bar{y}) \in A/\Ann(A)\mid \phi_+^A(\bar{x},\bar{y})=\phi_\ast ^A(\bar{x},\bar{y})=\phi_\ast ^A(\bar{y},\bar{x})=1\}|.$$
      Now using the fact that $\theta$ is an isomorphism and the diagram \eqref{dia:isoclinism} commutes, we get
      \begin{align*}
            |A/\Ann(A)|^2\Pb(A) &=|\{(\bar{x}, \bar{y}) \in A/\Ann(A):\theta \phi_+^A(\bar{x},\bar{y})=\theta \phi_\ast ^A(\bar{x},\bar{y})=\theta \phi_\ast ^A(\bar{y},\bar{x})=1\}|,\\
                              &=|\{(\bar{x}, \bar{y}) \in A/\Ann(A):\phi_+^B(\xi,\xi)(\bar{x},\bar{y})=\phi_\ast ^B(\xi,\xi)(\bar{x},\bar{y})=\phi_\ast ^B(\xi,\xi)(\bar{y},\bar{x})=1\}|,\\
                              &=|\{(\bar{x}^\prime,\bar{y}^\prime) \in B/\Ann(B):\phi_+^B(\bar{x}^\prime,\bar{y}^\prime)=\phi_\ast ^B(\bar{x}^\prime,\bar{y}^\prime)=\phi_\ast ^B(\bar{y}^\prime,\bar{x}^\prime)=1\}|,
  \end{align*}
                              where $\xi (\bar{y})=\bar{y}^\prime,\xi (\bar{x})=\bar{x}^\prime$.
Since $\xi$ is an isomorphism, from the preceding identities, we get
$$|A/\Ann(A)|^2\Pb(A) =|B/\Ann(B)|^2\Pb(B),$$
which, using the fact that $|A/\Ann(A)| = |B/\Ann(B)|$, proves $\Pb(A) = \Pb(B)$.

Note that $\Gamma_2(A/\Ann(A)) = \Gamma_2(A) \Ann(A)/\Ann(A)$. Now by the third isomorphism theorem of skew braces \cite{BEP24}, we get 
$$\Gamma_2(A/\Ann(A)) = \Gamma_2(A) /\big(\Gamma_2(A) \cap \Ann(A)\big).$$
Similarly $\Gamma_2(B/\Ann(B)) \cong \Gamma_2(B)/\big(\Gamma_2(B)\cap \Ann(B)\big)$.
 The second assertion now follows using the given hypotheses.
\end{proof}

A stem skew brace in a given isoclinism family of skew braces is a skew brace $B$  such that $\Ann(B)\subseteq \Gamma_2(B)$. We now derive some easy consequences of the preceding theorem.
\begin{cor}
  Let $(B,+,\circ)$ be a skew  brace of finite order such that $\Gamma_2(B) \cap \Ann(B)=1$.  Then there exist a finite skew  brace $A$ such that $\Pb(A)=\Pb(B)$ and $\Ann(A)=1$ and $\Gamma_2(A)$ isomorphic to $\Gamma_2(B)$.
\end{cor}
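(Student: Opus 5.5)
The plan is to take $A$ to be the quotient skew brace $B/\Ann(B)$ and show that it does everything required. The canonical projection $\pi\colon B \to B/\Ann(B)$ is a brace homomorphism. Write $A = B/\Ann(B)$. We need three facts:
\begin{enumerate}
\item $A$ is isoclinic to $B$, so that Theorem \ref{thm: isoclinism invariant} gives $\Pb(A)=\Pb(B)$;
\item $\Gamma_2(A)\cong \Gamma_2(B)$;
\item $\Ann(A)=1$.
\end{enumerate}

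First, I would show that $\Gamma_2(A) = \Gamma_2(B)\Ann(B)/\Ann(B)$. This holds because $\pi$ maps the generators $a*b$, $b*a$, $[a,b]^{+}$ of $\Gamma_2(B)$ onto the corresponding generators of $\Gamma_2(A)$. The isomorphism theorem already used in the proof of Theorem \ref{thm: isoclinism invariant} then gives
$$\Gamma_2(A)\cong \Gamma_2(B)/\big(\Gamma_2(B)\cap\Ann(B)\big) = \Gamma_2(B),$$
using the hypothesis. Concretely, $\theta := \pi|_{\Gamma_2(B)}\colon \Gamma_2(B)\to\Gamma_2(A)$ is an injective brace homomorphism (its kernel is $\Gamma_2(B)\cap\Ann(B)=1$) and it is surjective, so it is an isomorphism. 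This proves (2).

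Next, I would compute $\Ann(A)$. Suppose $\bar a\in\Ann(A)$. Then for every $b\in B$ the elements $a*b$, $b*a$ and $[a,b]^{+}$ lie in $\Ann(B)$. They also lie in $\Gamma_2(B)$, so they lie in $\Gamma_2(B)\cap\Ann(B)=1$. Hence $a\in\Ann(B)$, which gives $\Ann(A)=1$. This is (3). (Equivalently, this is the observation that $\Ann_2(B)=\Ann(B)$ under the hypothesis, using the description of $\Ann_2(B)$ given in Section 2.)

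Finally, the isoclinism. Put $\xi\colon A/\Ann(A)=A \to B/\Ann(B)$ to be the natural identification $\bar{\bar a}\mapsto \bar a$, and use $\theta^{-1}\colon\Gamma_2(A)\to\Gamma_2(B)$ in the diagram \eqref{dia:isoclinism}. Commutativity is immediate: $\theta^{-1}$ sends $[\bar a,\bar b]^{+}$ to $[a,b]^{+}$ and $\bar a*\bar b$ to $a*b$, since $\theta$ is the restriction of $\pi$ and is injective. So $A$ and $B$ are isoclinic, and Theorem \ref{thm: isoclinism invariant} yields $\Pb(A)=\Pb(B)$, which is (1).

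The only step needing some care is checking that $\theta$ is well defined and injective on $\Gamma_2(B)$. Everything rests on the single hypothesis $\Gamma_2(B)\cap\Ann(B)=1$, so I expect no real obstacle beyond applying the isomorphism theorem for skew braces correctly.
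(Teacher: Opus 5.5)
Your proof is correct, but it reaches the conclusion differently from the paper.

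\textbf{The paper's route.} The paper cites \cite[Theorem 2.18]{LV23} to obtain some stem skew brace $A$ (that is, $\Ann(A)\subseteq\Gamma_2(A)$) isoclinic to $B$. It leaves the remaining deductions implicit:
\begin{enumerate}
\item $\Pb(A)=\Pb(B)$ and $\Gamma_2(A)\cong\Gamma_2(B)$ come from the isoclinism.
\item $\Ann(A)=1$ follows from stemness combined with the second assertion of Theorem~\ref{thm: isoclinism invariant}, namely $|\Gamma_2(A)\cap\Ann(A)|=|\Gamma_2(B)\cap\Ann(B)|=1$.
\item Finiteness of $A$ rests on the closing remark of Section 4, that stem braces in a class containing a finite brace are finite.
\end{enumerate}

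\textbf{Your route.} You construct the stem representative explicitly as $A=B/\Ann(B)$. You show that the hypothesis forces $\Ann(A)=1$, which is equivalent to $\Ann_2(B)=\Ann(B)$. You show that the restricted projection $\Gamma_2(B)\to\Gamma_2(A)$ is a brace isomorphism. You check that (identity, $\theta^{-1}$) is an isoclinism, so the invariance theorem applies.

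\textbf{Comparison.} Your argument is self-contained and makes finiteness automatic. It also shows exactly where $\Gamma_2(B)\cap\Ann(B)=1$ is used: to kill $\Ann(A)$ and to make $\theta$ injective. The paper's argument is shorter, but it depends on the external existence theorem and on verifications it does not write out.

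You could even bypass Theorem~\ref{thm: isoclinism invariant}. The pairs counted in $\Pb(B/\Ann(B))$ are exactly the $(\bar a,\bar b)$ with $a*b,\ b*a,\ [a,b]^{+}\in\Gamma_2(B)\cap\Ann(B)=1$. That is precisely the count appearing in that theorem's proof for $B$.
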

 \begin{proof}
By  \cite[Theorem 2.18]{LV23}, there exists a stem skew brace $A$ such that $A$ is isoclinic to $B$. This $A$ is the required skew brace.
\end{proof}    
    
\begin{cor}
Let $A$ and $B$ be two isoclinic skew  braces of finite order such that B is non-trivial and simple. Then $A$ is isomorphic to $B \times C$, where $C$ is a skew  brace with $\Pb(C)=1$.
\end{cor}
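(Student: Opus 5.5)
Since $B$ is simple, its only ideals are $\{1\}$ and $B$, and $\Ann(B)$ is an ideal. I would first show $\Ann(B)=1$. If instead $\Ann(B)=B$, then $B$ is a trivial brace of abelian type, and simplicity forces $B\cong\mathbb{Z}_p$. The statement presumably excludes this case through the hypothesis that $B$ is non-trivial; if that case has to be handled, $\Gamma_2(B)=1$ and the claim becomes a statement about braces with $\Pb=1$, which one treats separately. So assume $\Ann(B)=1$. Then $\Gamma_2(B)$ is a nonzero ideal, since otherwise $B=\Ann(B)$. Hence $\Gamma_2(B)=B$, and $B\cong B/\Ann(B)$.

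Now let $(\xi,\theta)$ be an isoclinism from $A$ to $B$. Then $A/\Ann(A)\cong B$ and $\Gamma_2(A)\cong\Gamma_2(B)=B$, and by Theorem \ref{thm: isoclinism invariant} we have $\Pb(A)=\Pb(B)$ and $|\Gamma_2(A)\cap\Ann(A)|=|\Gamma_2(B)\cap\Ann(B)|=1$. Consequently $\Gamma_2(A)\cap\Ann(A)=1$ and $|\Gamma_2(A)|=|B|=|A/\Ann(A)|$. I would set $C:=\Ann(A)$ and $N:=\Gamma_2(A)$. Both are ideals of $A$, they intersect trivially, and $|N||C|=|A|$. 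So $(A,+)$ is the internal direct sum $N+C$, and because ideals are normal in $(A,\circ)$ with trivial intersection and the right order, $(A,\circ)$ is also the internal direct product $N\circ C$.

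The main obstacle is upgrading this to a brace isomorphism $A\cong N\times C$. I would use that $C=\Ann(A)$: for $c\in C$ we have $c\in\Ker\lambda$, so $c\circ y=c+y$ for all $y$. Also $c$ is central in both groups, so $n\circ c=c\circ n=c+n=n+c$. Hence the map $N\times C\to A$, $(n,c)\mapsto n+c=n\circ c$, respects $+$ and $\circ$ componentwise. For $\circ$ this reads $(n\circ c)\circ(n'\circ c')=(n\circ n')\circ(c\circ c')$, which holds by centrality. For $+$ it holds because $c$ is additively central. The map is a bijection by the counting above, so $A\cong N\times C$ as skew braces. Composing with $\theta$ gives $N\cong\Gamma_2(B)=B$ as skew braces, since $\theta$ is a brace isomorphism. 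Finally, $C=\Ann(A)$ is a trivial brace of abelian type, so $\Pb(C)=1$. As a consistency check, multiplicativity gives $\Pb(A)=\Pb(B)\Pb(C)=\Pb(B)$.
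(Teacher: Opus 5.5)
Your proof is correct and follows the paper's route: the trivial intersection $\Gamma_2(A)\cap\Ann(A)=1$, together with the order count $|\Gamma_2(A)|=|A/\Ann(A)|$, gives $A\cong\Gamma_2(A)\times\Ann(A)\cong B\times\Ann(A)$. There are only two differences. First, you obtain $\Gamma_2(A)\cap\Ann(A)=1$ from the second assertion of the isoclinism-invariance theorem, whereas the paper uses $\Gamma_2(A)\cap\Ann(A)\subseteq\Ann(\Gamma_2(A))\cong\Ann(B)=1$. Second, you explicitly check that $(n,c)\mapsto n+c=n\circ c$ is a brace isomorphism, which the paper leaves implicit. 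Your hedge about the case $\Ann(B)=B$ is unnecessary: since $\Ann(B)\subseteq\Ker(\lambda)$, non-triviality of $B$ already rules it out.
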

\begin{proof}
  Since $B$ is non-trivial and simple, we have $B = \Gamma_2(B)$ and $\Ann(B) = 1$. Thus $\Gamma_2(A) \cong B$ and $$\Ann(\Gamma_2(A)) \cong \Ann(\Gamma_2(B)) = \Ann(B) = 1.$$
  Note that $\Gamma_2(A) \cap \Ann(A) \subset \Ann(\Gamma_2(A)) = 1$ and $A/\Ann(A) \cong B/\Ann(B) \cong B = \Gamma_2(A)$. Thus, we get an exact sequence
  $$1 \to \Ann(A) \to A \to \Gamma_2(A) \to 1,$$
  which, comparing the orders, gives $A \cong  \Gamma_2(A) \times \Ann(A) \cong B \times \Ann(A)$. Thus, $\Ann(A)$ is the required skew brace $C$. 
\end{proof}

\begin{remark}
We conclude this section with two easy facts whose proofs follow directly from the definition. By \cite[Theorem 2.18]{LV23} we know that each isoclinism class of skew braces admits a stem skew brace. Let a given isoclinism class of skew braces contain a finite skew brace. Then every stem skew brace in that class is finite. It is proved in \cite[Proposition 2.17]{LV23} that any two stem skew braces in an isoclinism class have the same order. We remark that any two skew braces having equal finite orders in an isoclinism class of skew braces are simultaneously stem skew braces. 
\end{remark}

 
\section{COMMUTING PROBABILITY OF INFINITE SKEW BRACES}

The definition of the commuting probability defined for finite skew braces above does not bear any meaning for infinite skew braces. The concept of the commuting probability of an infinite group was also introduced by Gustafson \cite{WHG73} in $1973$, where he proved that the commuting probability of a non-abelian compact Hausdorff topological group  is always bounded above by $\frac{5}{8}$. Further, it was proved in \cite{EKG07} that the commuting probability of a non-abelian compact Hausdorff topological group  is $\frac{5}{8}$ if and only if $G/Z(G)\cong \mathbb{Z}_2\times \mathbb{Z}_2$. For a similar study of the commuting probability of infinite groups and its advancements, the reader is referred to \cite{AMV17, MCHT20}. It will be interesting to obtains brace analog of this theory.  In this section we define this concept for infinite skew braces and obtain a skew brace analogue of the above bound. We now prepare for the definition of the commuting probability of infinite skew braces.

We start by recalling the definition of  a topological group. A group $G$ is said to be  a  \emph{topological group} if $G$ is a topological space such that the maps $G\times G \rightarrow G$,  $(x, y) \mapsto xy$ and $G \rightarrow G$,  $x \mapsto x^{-1}$ are continuous.  We now define a topological skew brace. A skew brace $(B,+, \circ)$ is called a \emph{topological skew brace} if both the groups $(B,+)$ and $(B,\circ)$ are topological groups admitting a common topology. Thus, all four maps $\tau^+_1 :(B, +) \times (B, +) \to (B, +)$, $\tau^{\circ}_1 :(B, \circ) \times (B, \circ) \to (B, \circ)$, $\tau_2^+: (B, +)  \to (B, +)$ and $\tau_2^{\circ} : (B, \circ)  \to (B, \circ)$, respectively, given by $\tau^+_1(a, b) = a+b$, $\tau^{\circ}_1(a, b) = a \circ b$, $\tau^{+}_2(a) = -a$ and $\tau^{\circ}_2(a) =  a^{-1}$ are continuous. Also note that $\Id_B : B \to B$, the identity map, is also continuous. We can construct topological spaces $B^n := B \times \cdots \times B$ (n copies) by considering the product topology on $B^n$.  The concepts of compact and Hausdorff topological skew brace $(B, +, \circ)$ are defined by considering these properties with respect to a given common topology for $(B, +)$ and ($B, \circ)$.  We now make some key observations in the form of lemmas. 

\begin{lem}
Let $(B, +, \circ)$ be a topological skew brace. Then the map $\tau : B \times B \to B$, given by $\tau(a, b) = a * b$, is continuous.    
\end{lem}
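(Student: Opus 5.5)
The plan is to write $a*b = -a + (a\circ b) - b$ as a composite of maps already known to be continuous. Let $\Delta : B\times B \to B\times B\times B$ be
$$\Delta(a,b) = \bigl(\tau_2^+(a),\ \tau_1^\circ(a,b),\ \tau_2^+(b)\bigr) = \bigl(-a,\ a\circ b,\ -b\bigr).$$
Each component is continuous on $B\times B$ with the product topology:
\begin{itemize}
\item $(a,b)\mapsto -a$ is $\tau_2^+$ composed with the first projection;
\item $(a,b)\mapsto -b$ is $\tau_2^+$ composed with the second projection;
\item $(a,b)\mapsto a\circ b$ is $\tau_1^\circ$ itself.
\end{itemize}
The topology is common to $(B,+)$ and $(B,\circ)$, so all these maps are continuous for the same topology on $B$. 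By the universal property of the product topology, $\Delta$ is continuous.

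Next, let $\sigma : B\times B\times B\to B$ be $\sigma(u,v,w)=u+v+w$. Write $\sigma = \tau_1^+\circ(\tau_1^+\times \Id_B)$. A product of continuous maps is continuous for the product topologies, and $B\times B\times B$ is canonically homeomorphic to $(B\times B)\times B$. Hence $\sigma$ is continuous. Then $\tau=\sigma\circ\Delta$, and $\tau$ is continuous as a composite of continuous maps.

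There is no real obstacle. The only point needing care is that the argument genuinely uses a single common topology on $B$: we mix $\tau_1^\circ$ with $\tau_1^+$ and $\tau_2^+$, and the composite only makes sense topologically if the underlying space is the same. This is exactly the definition of a topological skew brace adopted above.

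The same method shows that $(a,b)\mapsto [a,b]^+$ and $(a,b)\mapsto[a,b]^\circ$ are continuous. This will be useful afterwards for showing that the set $Z$ is closed, being a preimage of the closed point $\{1\}$ in the Hausdorff space $B$.
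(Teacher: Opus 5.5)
Your proof is correct and is essentially the paper's argument: the paper also writes $\tau = f_2\circ f_1$, with $f_1(a,b)=(\tau_2^+(a),\tau_1^\circ(a,b),\tau_2^+(b))$ and $f_2(a,b,c)=\tau_1^+(\tau_1^+(a,b),c)$. Your remarks on the universal property of the product topology and on the need for a single common topology spell out what the paper leaves implicit.
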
 
\begin{proof}
 Note that the composition of two continuous maps is again continuous. So the map $f_1: B \times B \to B \times B \times B$, defined by $f(a, b) = (\tau_2^+(a), \tau_1^{\circ}(a, b), \tau_2^+(b))$ is continuous.  Also the map $f_2:  B \times B \times B \to B$, defined by $f_2(a, b, c) = \tau_1^+\big(\tau_1^+(a, b), c\big)$, is continuous. Now, $\tau$, which is nothing but $f_2$ composed with $f_1$, is continuous.
\end{proof}

The following is well known.
\begin{lem}
Let $(B, +, \circ)$ be a topological skew brace. Then the commutator maps $\gamma^+ : (B, +) \times (B, +) \to (B, +)$ and  $\gamma^{\circ} : (B, \circ) \times (B, \circ) \to (B, \circ)$ are continuous.
\end{lem}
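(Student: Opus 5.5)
The plan is to write each commutator map as a composition of maps already known to be continuous, exactly as in the proof of the preceding lemma for $\tau(a,b)=a*b$. For $(B,+)$ we have $\gamma^+(a,b)=a+b-a-b$. I would first define
\[
g_1\colon B\times B\to B^4,\qquad g_1(a,b)=\bigl(a,\;b,\;\tau_2^+(a),\;\tau_2^+(b)\bigr).
\]
This map is continuous for the product topology, because each coordinate is continuous. The coordinates are the projections, or a projection followed by the continuous inversion $\tau_2^+$.

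Next I would define $g_2\colon B^4\to B$ by
\[
g_2(a,b,c,e)=\tau_1^+\bigl(\tau_1^+(\tau_1^+(a,b),c),e\bigr).
\]
This is continuous since it is built from $\tau_1^+$ and projections, and products of continuous maps into product spaces are continuous. Then $\gamma^+=g_2\circ g_1$ is continuous.

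The argument for $\gamma^\circ(a,b)=a\circ b\circ a^{-1}\circ b^{-1}$ is identical, with $\tau_1^+,\tau_2^+$ replaced by $\tau_1^\circ,\tau_2^\circ$. It uses only that $(B,\circ)$ is a topological group in the common topology.

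There is no real obstacle here. The only point to check is the standard fact that a map into a product is continuous exactly when each component is continuous. This justifies continuity of $g_1$ and of the intermediate maps such as $(a,b,c,e)\mapsto(\tau_1^+(a,b),c,e)$. Associativity means the bracketing in $g_2$ does not matter.
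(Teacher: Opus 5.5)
Your proposal is correct: writing $\gamma^+$ and $\gamma^{\circ}$ as compositions of the continuous group operations and inversions is exactly the standard argument. The paper itself gives no proof of this lemma (it just says it is well known), and your decomposition mirrors the paper's proof of the preceding lemma on continuity of $\tau(a,b)=a*b$.
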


Our last key observation is 
\begin{lem}
Let $(B, +, \circ)$ be a Hausdorff topological skew brace. Then $Z:= \{(a, b) \mid \tau(a, b) = \gamma^{\circ}(a, b) = \gamma^+(a, b) = 1\}$ is a closed subset of $B \times B$, where  $B \times B$ is a topological space with the product topology. Moreover, the sets  $\Cb_B(x)$ and $\Ann(B)$ are also closed subsets of $B$. 
\end{lem}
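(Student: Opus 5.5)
The plan is to write $Z$ as a finite intersection of preimages of the closed point $\{1\}$ under continuous maps. Since $B$ is Hausdorff, singletons are closed, so $\{1\}$ is closed in $B$. By the two preceding lemmas, the maps $\tau, \gamma^{\circ}, \gamma^{+} : B \times B \to B$ are continuous with respect to the product topology on $B \times B$. Hence each of $\tau^{-1}(1)$, $(\gamma^{\circ})^{-1}(1)$ and $(\gamma^+)^{-1}(1)$ is closed in $B \times B$. Their intersection is exactly $Z$, so $Z$ is closed. Here we use the fact noted earlier that $\gamma^{\circ}(a,b)=1$ if and only if $b*a=1$ in the presence of the other conditions; in any case, the set is defined directly by these three conditions.

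For $\Cb_B(x)$, fix $x \in B$ and consider the inclusion $\iota_x : B \to B \times B$, $b \mapsto (x, b)$. This map is continuous because its coordinate maps, a constant map and $\Id_B$, are continuous. By the second description of the brace centraliser,
$$\Cb_B(x) = \{b \in B \mid x*b = [x,b]^{\circ} = [x,b]^+ = 1\} = \iota_x^{-1}(Z).$$
Since $Z$ is closed, $\Cb_B(x)$ is closed in $B$.

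Finally, $\Ann(B) = \Cb_B(B) = \bigcap_{x \in B} \Cb_B(x)$, as remarked after the definition of brace centralisers. An arbitrary intersection of closed sets is closed, so $\Ann(B)$ is closed.

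The only point needing care is the equivalence of the two descriptions of $\Cb_B(x)$ (with $b*x=1$ versus $[x,b]^{\circ}=1$). The paper already records this equivalence, so I would simply invoke it. There is no real obstacle: the argument is purely topological once continuity of $\tau$, $\gamma^+$ and $\gamma^{\circ}$ is available.
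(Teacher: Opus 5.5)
Your proof is correct and follows the paper's route: $Z$ is written as the intersection of the preimages of the closed point $\{1\}$ under the continuous maps $\tau$, $\gamma^{\circ}$ and $\gamma^{+}$. For the second assertion the paper only says ``appropriate continuous maps''; your slice map $b \mapsto (x,b)$, giving $\Cb_B(x)$ as the preimage of $Z$, together with $\Ann(B)=\bigcap_{x\in B}\Cb_B(x)$, is a valid way to fill this in.
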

\begin{proof}
The first  assertion follows by noting that $Z = \tau(1) \cap \gamma^{\circ}(1) \cap \gamma^+(1)$. The second assertion also holds by considering appropriate continuous maps.
\end{proof}

    Let $B := (B,+, \circ)$ be a compact Hausdorff topological skew brace. Then $(B,+)$ is a compact Hausdorff topological group, and therefore,  there is a unique probability Haar measure space $((B, +), \mathcal{M}, \mu)$.  Consider the product measure $\mu\times \mu$ on the product space $B\times B$, which turns out to be  a probability measure. Since $Z$ is closed in $B \times B$, $(\mu \times \mu)(Z)$ is well defined. The \emph{commuting probability} of the skew brace $B$, denoted by  $\Pb(B)$, is defined to be
$$\Pb(B) := (\mu\times \mu)(Z).$$

We remark that if $B$ is a finite skew brace, then $B$ is a compact skew brace with discrete topology, 
and so the Haar measure of $B$ is the counting measure. Therefore, 
\[
\Pb(B) = (\mu\times \mu)(Z) = \frac{|Z|}{|B|^2},
\]
which agrees with the definition of the commuting probability of a finite skew brace given in  \eqref{exp-Pb}.

\begin{remark}\label{rem6.4}
Let $B := (B,+, \circ)$ be a compact Hausdorff topological skew brace, and $((B, +), \mathcal{M}, \mu)$ be the probability Haar measure space.  We claim that $((B, \circ), \mathcal{M}, \mu)$  is a probability Haar measure space.  Since ${\lambda_x}$ is a homeomorphism from  $(B,+)$ onto itself, it follows that 
 ${\lambda_x}$ maps Borel sets to Borel sets, that is, $\mathcal{M}$ is mapped onto itself. Note that for any  $D \in \mathcal{M}$ and $x \in B$, we have    $x \circ D =  x + \lambda_x(D)$. So $x\circ D\in \mathcal{M}$.  Define a function $\mu_{\lambda_x}:\mathcal{M}\rightarrow[0,\infty]$ by $\mu_{\lambda_x}(D)=\mu({\lambda_x}(D))$. Since $\lambda_x$ is a group homomorphism on $(B, +)$, it turns out that $\mu_{\lambda_x}$ is also a probability Haar measure on $(B,+)$. Hence,  by the uniqueness of the probability Haar measure, $\mu_{\lambda_x}=\mu$. Now $\mu( x\circ D)=\mu(x+\lambda_x(D))=\mu(\lambda_x(D))=\mu(D) $. So $\mu $ is also a probability Haar measure on $(B,\circ)$. Hence we can simply say that the skew brace $B$ admits the unique probability Haar measure $\mu$.
\end{remark}

For simplicity of notation,  a `compact Hausdorff topological skew brace' will be termed just as a `compact skew brace'. We now prove

\begin{lem} \label{lem: int and cbd}
    Let $(B,+, \circ)$ be a compact  skew brace. Then
$ \Pb(B) \;=\; \int_{B} \mu(\Cb_{B}(x)) \, d\mu(x)$,
where
$\mu(\Cb_{B}(x)) \;=\; \int_{B} \chi_{Z}(x,y)\, d\mu(y)$.
\end{lem}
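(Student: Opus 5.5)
This is essentially Fubini--Tonelli applied to the indicator function of $Z$. The plan is as follows.

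First I would record that $Z$ is closed in $B\times B$ by the last key lemma above, hence Borel. Therefore $\chi_Z$ is a nonnegative bounded Borel function on $B\times B$. Since $B$ is compact, $\mu$ is a finite measure, so Tonelli's theorem applies to $\chi_Z$ with respect to the product measure $\mu\times\mu$.

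A subtlety I would address here: when $B$ is not second countable, the Borel $\sigma$-algebra of $B\times B$ can be strictly larger than the product $\sigma$-algebra. The clean way around this is to work with the Radon product (or the completion of $\mu\times\mu$). Its Fubini theorem holds for nonnegative Borel functions on products of compact Hausdorff spaces with Radon measures, in particular for indicators of closed sets. I expect this measure-theoretic point to be the only real obstacle; the rest is bookkeeping. I would handle it by noting that $\Pb(B)=(\mu\times\mu)(Z)$ is understood with respect to this Radon product measure, for which the iterated-integral formula is standard for closed sets. It can also be verified directly by approximating $\chi_Z$ from above by continuous functions, using Urysohn and regularity, and applying Fubini for continuous functions.

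Next, for each fixed $x\in B$, the section
\[
Z_x=\{y\in B\mid (x,y)\in Z\}
\]
is exactly the set of $y$ with $x*y=[x,y]^{\circ}=[x,y]^{+}=1$. By the second description of the brace centraliser, this is $\Cb_B(x)$. It is closed, either as a preimage of $\{1\}$ under the continuous maps $y\mapsto x*y$, $\gamma^{\circ}(x,\cdot)$ and $\gamma^{+}(x,\cdot)$, or as a slice of the closed set $Z$ under the continuous map $y\mapsto (x,y)$. Hence it is measurable, and
\[
\mu(\Cb_B(x))=\int_B\chi_Z(x,y)\,d\mu(y).
\]
Tonelli also gives that $x\mapsto\mu(\Cb_B(x))$ is measurable; alternatively, this map is upper semicontinuous because $Z$ is closed and $B$ is compact.

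Finally I would write
\[
\Pb(B)=(\mu\times\mu)(Z)=\int_{B\times B}\chi_Z\,d(\mu\times\mu)=\int_B\Big(\int_B\chi_Z(x,y)\,d\mu(y)\Big)d\mu(x)=\int_B\mu(\Cb_B(x))\,d\mu(x),
\]
which is the claim. By Remark \ref{rem6.4}, $\mu$ is simultaneously the Haar measure for $(B,+)$ and for $(B,\circ)$, so there is no ambiguity about which group's measure is being used.
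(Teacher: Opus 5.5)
Your proposal is correct and follows the same route as the paper: apply Fubini--Tonelli to $\chi_Z$ and identify the section $Z_x$ with $\Cb_B(x)$. Your extra care about the Borel versus product $\sigma$-algebra on $B\times B$, resolved by working with the Radon product measure for which Fubini holds for closed sets, addresses a point the paper passes over silently, but it does not change the argument.
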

\begin{proof}
By the definition of the commuting probability, we get 
\begin{align*}
          \Pb(B)&= (\mu \times \mu)(Z) = \int_{B \times B} \chi_{Z} \, d(\mu \times \mu),
\end{align*}
     which  by the Fubini-Tonelli’s Theorem \cite[Section 36]{PRH50} gives 
     \begin{align*}
          \Pb(B) &= \int_{B} \int_{B} \chi_{Z}(x,y) \, d\mu(x) \, d\mu(y) =\int_{B} \mu(\Cb_{B}(x)) \, d\mu(x),
\end{align*}
which, using the given value of  $\mu(\Cb_{B}(x))$,  completes the proof.
\end{proof}

\begin{lem}\label{lem:closed}
    Assume that $H$ is a Borel subgroup of $(B,\circ)$  of a compact  skew brace $(B,+, \circ)$. If $[B : H] \geqslant n$, then  $\mu(H) \leqslant \frac{1}{n}$, where $[B : H]$ denotes the index of $(H,\circ)$ in $(B,\circ)$. Moreover, if $[B : H] = n$, then $\mu(H)= \frac{1}{n}$. Specifically, if $\Ann(B)$ is  properly contained in $B$, then  $\mu(Ann(B))\leqslant\frac{1}{2}$. Further, if the index of $H$ in $(B, \circ)$ is not finite, then  $\mu(H)= 0$.
\end{lem}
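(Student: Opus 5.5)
The plan is to use only two facts about the Haar measure $\mu$ on $(B,\circ)$: it is invariant under left translation (Remark \ref{rem6.4}), and it is a probability measure. Left cosets of a Borel subgroup are again Borel, since $x\circ D = x+\lambda_x(D)$. So distinct left cosets $x_1\circ H,\dots,x_k\circ H$ are pairwise disjoint Borel sets, each of measure $\mu(H)$.

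First, suppose $[B:H]\geqslant n$. Choose $n$ distinct left cosets of $H$ in $(B,\circ)$. By additivity and invariance,
$$n\,\mu(H)=\mu\Big(\bigcup_{i=1}^n x_i\circ H\Big)\leqslant \mu(B)=1,$$
which gives $\mu(H)\leqslant \frac1n$.

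Next, if $[B:H]=n$ exactly, the $n$ cosets partition $B$. The inequality then becomes the equality $n\mu(H)=1$, so $\mu(H)=\frac1n$.

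For the statement about $\Ann(B)$: it is closed (by the earlier lemma), hence Borel, and it is a subgroup of $(B,\circ)$ because it is an ideal. If $\Ann(B)\neq B$, then $[B:\Ann(B)]\geqslant 2$, and the first part gives $\mu(\Ann(B))\leqslant\frac12$.

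Finally, suppose the index of $H$ is infinite. Then for every $n$ there are at least $n$ distinct cosets, so $\mu(H)\leqslant\frac1n$ for all $n$. Hence $\mu(H)=0$.

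There is no real obstacle. The one point needing care is measurability: we must know that $H$ and its translates $x\circ H$ lie in $\mathcal M$, and that $\mu$ is left $\circ$-invariant on them. Both facts were settled in Remark \ref{rem6.4}, so I would simply cite that remark at the first step.
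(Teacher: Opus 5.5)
Your proposal is correct and follows the paper's proof essentially step for step: pairwise disjoint left cosets of $H$ all have measure $\mu(H)$ (by the $\circ$-invariance of $\mu$ from Remark~\ref{rem6.4}), which gives $n\mu(H)\leqslant 1$, with equality when the cosets partition $B$. The differences are only cosmetic. You treat the infinite-index case directly, using $\mu(H)\leqslant \frac{1}{n}$ for every $n$, whereas the paper argues by contradiction with $m\epsilon>1$; and you explicitly justify the $\Ann(B)$ claim (closed, hence Borel, and a subgroup of $(B,\circ)$), which the paper leaves implicit.
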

\begin{proof}
As $H$ is a Borel subset of $B$,  $H$ is a measurable set, and so $\mu(H)$ is well defined.   
Since $[B : H] \geqslant n$, there exist at least $n$ disjoint cosets 
\[
x_{1}   \circ H, \; x_{2} \circ H, \ldots, x_{n}  \circ H, 
\]
of $H$ in $(B, \circ)$, which are all measurable sets.
Thus, we have
\[
1 = \mu(B) \;\geqslant\; \mu\!\left(\sqcup_{i=1}^{n} x_{i} \circ H \right) 
= \sum_{i=1}^{n} \mu(x_{i}   \circ H) 
= n \mu(H)
\]
If $[B : H] = n$, then $(B, \circ) = \sqcup_{i=1}^{n} x_{i}   \circ H$, and therefore $\mu(H)= \frac{1}{n}$.

Finally assume that the index of $H$ in $B$ is not finite. Contrarily assume that  $\mu(H)  = \epsilon > 0$.  Then there exists a natural number $m$ such that $m \epsilon > 1$. Now choose any $m$ left cosets $\{x_{1} \circ H, \; x_{2} \circ H, \ldots, x_{m} \circ H\}$ of $H$ in $(B, \circ)$. Therefore
\[
1 = \mu(B) \;\geqslant\; \mu\!\left(\sqcup_{i=1}^{m} x_{i} \circ H \right) 
= \sum_{i=1}^{m} \mu(x_{i} \circ H) 
= m \mu(H) = m \epsilon > 1,
\]
which is not possible. The proof is complete. 
\end{proof}

We would like to remark that if $H$ is a Borel subgroup of $(B, +)$, then the assertions of the preceding result hold true. Moreover, if $H$ is a sub-skew brace of $B$, which is a Borel set, then $[(B, +) : H]$ and $[(B, \circ) : H]$ are simultaneously finite or infinite; if finite, then $[(B, +) : H] = [(B, \circ) : H]$.

Let $B:= (B, +, \circ)$ be a compact  skew brace such that $B \ne \Ann(B)$. Suppose that no element $x \in B - \Ann(B)$ has finite centraliser index $[(B, \circ): \Cb_B(x)]$ in $B$. Then 
\begin{align*}
    \Pb(B) &= (\mu \times \mu)(Z)\\
           &= \int_{B} \mu(\Cb_{B}(x)) \, d\mu(x)~~(\text{by Lemma }\ref{lem: int and cbd})\\
           &= \int_{\Ann(B)} \mu(\Cb_{B}(x)) \, d\mu(x) + \int_{B - \Ann(B)} \mu(\Cb_{B}(x)) \, d\mu(x)\\
           & \leqslant \mu(\Ann(B)) +  \int_{B - \Ann(B)} \mu(\Cb_{B}(x)) \, d\mu(x)\\
           & = 0 ~~(\text{by Lemma }\ref{lem:closed}).
 \end{align*}
 Thus we get

\begin{lemma}
For a  compact  skew brace $B$ with $B \ne \Ann(B)$ and  $\Ann(B) = \FCI(B)$,  $\Pb(B) = 0$.
\end{lemma}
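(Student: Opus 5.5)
The plan is to split $\Pb(B)$ along the decomposition $B = \Ann(B) \sqcup (B - \Ann(B))$ and show that each piece has measure zero. This is the computation displayed just before the statement, but the hypothesis now reads $\Ann(B) = \FCI(B)$ rather than ``no element of $B - \Ann(B)$ has finite centraliser index''. These say the same thing, since $\Ann(B) \subseteq \FCI(B)$ always holds: $\Cb_B(x) = B$ for $x \in \Ann(B)$.

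We begin with Lemma~\ref{lem: int and cbd}, which gives
\[
\Pb(B) = \int_{\Ann(B)} \mu(\Cb_B(x))\,d\mu(x) + \int_{B-\Ann(B)} \mu(\Cb_B(x))\,d\mu(x).
\]
Both pieces are measurable, because $\Ann(B)$ is closed. For $x \in B - \Ann(B)$ we have $x \notin \FCI(B)$, so $\Cb_B(x)$ has infinite index in $(B,\circ)$. By Proposition~\ref{pro:subgroup}, $\Cb_B(x)$ is a subgroup of $(B,\circ)$, and it is closed, hence Borel. The last assertion of Lemma~\ref{lem:closed} then gives $\mu(\Cb_B(x)) = 0$. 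So the second integral vanishes.

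The first integral is bounded by $\mu(\Ann(B))$. Since $\Ann(B) \ne B$ and $\Ann(B) = \FCI(B)$, we apply the same reasoning to $\Ann(B)$ itself. It is a closed subgroup of $(B,\circ)$. Pick any $x \in B - \Ann(B)$. Then $\Ann(B) \subseteq \Cb_B(x)$, and $\Cb_B(x)$ has infinite index in $(B,\circ)$, so $\Ann(B)$ has infinite index as well. Lemma~\ref{lem:closed} therefore gives $\mu(\Ann(B)) = 0$, and the first integral is $0$ too. Hence $\Pb(B) = 0$.

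This last step is the one to watch. The displayed computation before the statement writes ``$\leqslant \mu(\Ann(B)) + \ldots = 0$'' without justifying $\mu(\Ann(B)) = 0$. The inclusion $\Ann(B) \subseteq \Cb_B(x)$ for a non-FCI element $x$ is exactly what closes that gap. The only other routine point is the measurability of $x \mapsto \mu(\Cb_B(x))$, which comes from Fubini--Tonelli applied to the closed set $Z$, as in Lemma~\ref{lem: int and cbd}.
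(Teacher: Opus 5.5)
Your proof is correct and follows the paper's route: you split the integral of Lemma~\ref{lem: int and cbd} over $\Ann(B)$ and $B-\Ann(B)$, and you make both pieces vanish using the infinite-index case of Lemma~\ref{lem:closed}. Your inclusion $\Ann(B)\subseteq \Cb_B(x)$ for $x\in B-\Ann(B)$ (or simply monotonicity, $\mu(\Ann(B))\leqslant\mu(\Cb_B(x))=0$) supplies exactly the step the paper leaves implicit when it takes $\mu(\Ann(B))=0$.
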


Now we prove that $\FCI(B)$ is a Borel set.
\begin{pro}
    Let $(B,+, \circ)$ be a Hausdorff topological skew brace. Then $\FCI(B)$ is a Borel set.
\end{pro}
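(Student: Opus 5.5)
We will write $\FCI(B)$ as a countable union of sets that are visibly Borel, following the classical argument for the FC-centre of a topological group. For each integer $n \geqslant 1$ set
$$X_n := \{x \in B \mid [(B,\circ) : \Cb_B(x)] \leqslant n\},$$
so that $\FCI(B) = \bigcup_{n\geqslant 1} X_n$. It then suffices to show that each $X_n$ is closed in $B$. A countable union of closed sets is an $F_\sigma$ set, and in particular a Borel set.

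To see that $X_n$ is closed, we characterise it by a condition that uses only finitely many elements. By Proposition \ref{pro:subgroup}, $\Cb_B(x)$ is a subgroup of $(B,\circ)$. Hence $[(B,\circ):\Cb_B(x)] \leqslant n$ holds if and only if, among any $n+1$ elements $y_1,\dots,y_{n+1}\in B$, there are indices $i\neq j$ with $y_i^{-1}\circ y_j \in \Cb_B(x)$. Equivalently, $(x, y_i^{-1}\circ y_j) \in Z$, where $Z$ is the set of the preceding lemma. This gives
$$B \setminus X_n = \bigcup_{(y_1,\dots,y_{n+1})\in B^{n+1}} \ \bigcap_{i\neq j} \{x \in B \mid (x, y_i^{-1}\circ y_j) \notin Z\}.$$
For fixed $y_i,y_j$, the map $x \mapsto (x, y_i^{-1}\circ y_j)$ is continuous. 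We use the lemmas showing that $\tau$, $\gamma^+$ and $\gamma^\circ$ are continuous, together with the Hausdorff property, to conclude that $Z$ is closed. Therefore each set $\{x \mid (x, y_i^{-1}\circ y_j)\notin Z\}$ is open. A finite intersection of open sets is open, and an arbitrary union of open sets is open. So $B\setminus X_n$ is open, and $X_n$ is closed.

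The main point to check carefully is the identification of $B \setminus X_n$ above. Here we need to confirm that the index condition really is captured by the elements $y_i^{-1}\circ y_j$ lying in $\Cb_B(x)$. This is exactly where we need $\Cb_B(x)$ to be a subgroup of $(B,\circ)$ rather than merely a subset, since otherwise left cosets would not partition $B$. Proposition \ref{pro:subgroup} supplies this, so the argument goes through for an arbitrary skew brace even though $\Cb_B(x)$ need not be a subgroup of $(B,+)$.

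If one prefers an argument that avoids the $(n+1)$-tuple formulation, there is an alternative. For a fixed finite tuple $(y_1,\dots,y_n)$, the condition $B=\bigcup_i y_i\circ \Cb_B(x)$ can be written as an intersection over $b\in B$ of finite unions of the closed conditions $(x, y_i^{-1}\circ b)\in Z$. This shows that the corresponding set of $x$ is closed. However, taking the union over all tuples would be uncountable, so the complement formulation above is the one to use.
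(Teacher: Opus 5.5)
Your proof is correct. It uses the same decomposition as the paper, $\FCI(B)=\bigcup_{n} F_n$ with each $F_n=\{y\in B \mid [(B,\circ):\Cb_B(y)]\leqslant n\}$ closed. The difference is in how closedness of $F_n$ is established.

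The paper's route is as follows. It takes a net $x_\lambda\to x$ with $x_\lambda\in F_n$, writes $B=\bigsqcup_{i=1}^{k} b_i\circ\Cb_B(x_\lambda)$, and passes to the limit in the conditions $b_i^{-1}\circ b\in\Cb_B(x_\lambda)$ using continuity of three auxiliary maps. As written, this treats $k$, the representatives $b_i$, and the index $i$ attached to a given $b$ as independent of $\lambda$, although all of them depend on $\lambda$. Repairing it along those lines would mean passing to subnets on which the $b_i$ converge. That needs compactness, which is not among the proposition's hypotheses.

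Your argument runs in the opposite direction. You fix $n+1$ witnesses $y_1,\dots,y_{n+1}$ in distinct left cosets of $\Cb_B(x)$. You then use closedness of $Z$ to propagate the finitely many conditions $(x,y_i^{-1}\circ y_j)\notin Z$ to a whole neighbourhood of $x$. This removes the dependence on $\lambda$, needs no nets, and works under exactly the stated hypothesis; from the Hausdorff assumption you only use that $\{1\}$ is closed.

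Your closing remark is also apt. Fixing a tuple of representatives and then taking the union over all tuples gives an uncountable union of closed sets. That is exactly the issue the paper's version glosses over.
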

\begin{proof}
     Let $F_n :=\{y \in B \mid [(B,\circ) : \Cb_B(y)]\leqslant n\}$.  Then $\FCI(B) = \cup_{n\in\mathbb{N}} F_n$. Let $x$ be a limit point of $F_n$. Then there is a net $\{x_\lambda\}$ in $F_n$ which converge to $x$. Since $x_\lambda\in F_n$, we see that  $B=\sqcup_{i=1}^{k}(b_i\circ\Cb_B(x_\lambda) )$ for some $1\leqslant k\leqslant n$ and $b_i\in B$. Let $b\in B$ be an arbitrary element. Then $b\in b_i\circ\Cb_B(x_\lambda)$, for some $1\leqslant i\leqslant k$. So $b_i^{-1}\circ b\in \Cb_B(x_\lambda)$. Define three functions $f_1, f_2, f_3 : B\rightarrow B$ by $f_1(y)=(b_i^{-1}\circ b)\circ y \circ (b_i^{-1}\circ b)^{-1}$ and $f_2(y)=(b_i^{-1}\circ b)\ast y$ and $f_3(y)=y\ast (b_i^{-1}\circ b)$ for all $y\in B$. Then $f_1,f_2,f_3$ are continuous functions. Hence $f_j(x_\lambda)$ converges to $f_j(x)$ for $1\leqslant j \leqslant 3$, and therefore $b_i^{-1}\circ b\in \Cb_B(x)$. This implies that $b\in b_i\circ\Cb_B(x)$. Consequently, $[(B,\circ):\Cb_B(x)]\leqslant k$, which gives  $x \in F_n$ proving $F_n$ is closed in $B$. The proof is complete.  
\end{proof}

We now prove a brace analogue of \cite[Theorem 1.2]{HR12}.
\begin{thm}
    Let $(B,+, \circ)$ be a compact  skew brace. Then the following statements are equivalent:
    \begin{enumerate}
        \item $\Pb(B)>0$ .
        \item $\FCI(B)$ is open in $B$.
        \item $[(B,\circ): \FCI(B)]<\infty$.
    \end{enumerate}
\end{thm}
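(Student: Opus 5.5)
The plan is to follow the scheme of the group-theoretic argument of \cite{HR12}, using that $F:=\FCI(B)$ is a subgroup of $(B,\circ)$ and a Borel set. Throughout I write $F_n=\{y\in B\mid [(B,\circ):\Cb_B(y)]\leqslant n\}$; these sets are closed, as shown in the proof that $\FCI(B)$ is Borel. The implications will be proved in the order $(1)\Rightarrow(3)$, $(3)\Rightarrow(2)$, $(2)\Rightarrow(1)$.

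\emph{(1) implies (3).} By Lemma \ref{lem: int and cbd}, $\Pb(B)=\int_B\mu(\Cb_B(x))\,d\mu(x)$. By Lemma \ref{lem:closed}, $\mu(\Cb_B(x))=0$ whenever $x\notin F$, since $\Cb_B(x)$ is a closed subgroup of $(B,\circ)$ of infinite index. Hence
$$\Pb(B)=\int_F\mu(\Cb_B(x))\,d\mu(x)\leqslant\mu(F),$$
so $\mu(F)>0$. Lemma \ref{lem:closed} applied to the Borel subgroup $F$ then forces $[(B,\circ):F]<\infty$.

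\emph{(3) implies (2).} A subgroup of finite index that is a Borel set has positive measure, namely $\mu(F)=1/[B:F]>0$. Since $F=\bigcup_n F_n$ is an increasing union of closed sets, countable additivity gives some $n$ with $\mu(F_n)>0$. The Steinhaus--Weil theorem for the Haar measure on the compact group $(B,\circ)$ then shows that $F_n^{-1}\circ F_n$ contains an open neighbourhood $U$ of $1$. The earlier proof shows $\Cb_B(x_1)\cap\Cb_B(x_2)\subseteq\Cb_B(x_1\circ x_2)$ and $\Cb_B(x)=\Cb_B(x^{-1})$, so $F_n^{-1}\circ F_n\subseteq F_{n^2}\subseteq F$. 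Thus $F$ contains an open neighbourhood of $1$, and being a subgroup of $(B,\circ)$ it is open, since $F=\bigcup_{f\in F}f\circ U$ and left translations are homeomorphisms.

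\emph{(2) implies (1).} If $F$ is open, then by compactness it has finite index in $(B,\circ)$, so $\mu(F)>0$. Countable additivity again gives $n$ with $\mu(F_n)>0$. For $x\in F_n$ we have $\mu(\Cb_B(x))\geqslant 1/n$ by Lemma \ref{lem:closed}, since $\Cb_B(x)$ is a closed subgroup of index at most $n$. Therefore
$$\Pb(B)\geqslant\int_{F_n}\mu(\Cb_B(x))\,d\mu(x)\geqslant\mu(F_n)/n>0.$$
Measurability of $x\mapsto\mu(\Cb_B(x))$ is supplied by Fubini--Tonelli, as in Lemma \ref{lem: int and cbd}.

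The main obstacle is the step from positive measure to openness in (3) implies (2). It needs Steinhaus--Weil on $(B,\circ)$, which is legitimate because $\mu$ is also the Haar measure of $(B,\circ)$ by Remark \ref{rem6.4}. It also needs the closure property $F_n^{-1}\circ F_n\subseteq F_{n^2}$, which rests on Proposition \ref{pro:subgroup}, namely that each $\Cb_B(a)$ is a subgroup of $(B,\circ)$, so that the index bounds multiply.
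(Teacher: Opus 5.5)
Your proof is correct and uses the same ingredients as the paper's: the bound $\Pb(B)\leqslant\mu(\FCI(B))$, which comes from the integral formula together with the vanishing of $\mu(\Cb_B(x))$ off $\FCI(B)$; Lemma \ref{lem:closed}, which turns positive measure into finite index and back; and the Steinhaus--Weil theorem, combined with the fact that a subgroup of $(B,\circ)$ containing a nonempty open set is open. Only the cycle of implications is rearranged, and two local variations are harmless but not needed: the detour through the closed pieces $F_n$ (the paper applies Steinhaus--Weil directly to the Borel subgroup $F$, and $F_n^{-1}\circ F_n\subseteq F$ is immediate since $F$ is a subgroup, so the finer inclusion into $F_{n^2}$ is not required), and your explicit bound $\Pb(B)\geqslant\mu(F_n)/n$, which does the job of the paper's almost-everywhere argument for positivity of the integral.
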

\begin{proof}
 Set $F:= \FCI(B)$. Since the index of $\Cb_B(x)$ is infinite in $(B, \circ)$ for all $x \in B - F$, it follows that 
 $$\Pb(B)= \int_{B} \mu(\Cb_{B}(x)) \, d \mu(x)= \int_{F} \mu(\Cb_{B}(x)) \, d \mu(x) \leqslant  \int_{F}  \, d \mu(x)=\mu(F).$$ 
 So by  (1), we get  $\mu(F)>0$. Then it follows from  \cite[ Corollary 20.17]{ES63} that $F\circ F^{-1}$ contains a nonempty open set.  Since $F\circ F^{-1} = F$,  by \cite[Theorem 5.5]{ES63}, $F$ is open in $B$.
 
   Now we assume (2).  Since $F$ is  nonempty and open in $B$, we have $\mu(F) > 0$. Then, by Lemma \ref{lem:closed},  $[(B,\circ) : F] < \infty$.
 This proves (3).  If (3) holds, then, by Lemma \ref{lem:closed},  $\mu(F) > 0$. If $\int_{F} \mu(\Cb_{B}(x)) \, d \mu(x) = 0$, then,  by \cite[Theorem 1.39]{WR87}, $\mu \Cb_B = 0$ almost everywhere on $F$. But we know that  $\mu \Cb_B$ is nowhere zero on $F$. Hence,  $\Pb(B)= \int_{F} \mu(\Cb_{B}(x)) \, d \mu(x)>0$, which proves (1), and the proof is complete.
\end{proof}
 
Now we present an example of a compact  non-trivial skew brace $B$ with $\Pb(B)=0$.  Let $(G,+)$ be a non-abelian connected compact Lie group. Let $B := (G, +, +_{op})$. Note that if $[a, x]^+ = 1$, then $a*x=1$. It now follows that  $\Cb_B(x) = \C_{(B, +)}(x) =  \C_{(B, \circ)}(x)$ for all $x \in B$. Thus we get $\Pb(B) = \Pb(G, +, +)$ is equal to the commuting probability of the group $(G, +)$.  Let $\Phi(G, +)$ be the subset of all pairs $(x, y) \in (G, +) \times (G, +)$ such that $x$ and $y$ generate a non-abelian free subgroup of $(G, +)$. Then it follows from  \cite[Proposition 6.86. and Exercise E 6.18.]{HM23} that   Haar measure of the complement  $C$ of $\Phi(G, +)$ in $(G, +)$ is zero. Since $Z$ is contained in $C$, we have $(\mu \times \mu)(Z) = 0$, where $\mu$ is the probability Haar measure. This proves that $\Pb(B) = 0$.

We now get a lower bound  on $\Pb(B)$ for a compact  skew brace $B$.
\begin{pro}\label{pro:infinite and index p}
    Let $(B,+, \circ)$ be a compact  skew brace such that $|B/\Ann(B)|=d$.  Then $\Pb(B)\geqslant \frac{2d-1}{d^2}$. Moreover, if $d = p$, a prime integer,  then $\Pb(B)=\frac{2p-1}{p^2}$. 
\end{pro}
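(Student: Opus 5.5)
The plan is to mimic the proof of Proposition \ref{thm:upper and lower} and of Proposition \ref{thm:  prime fixed}(1), replacing sums over elements by integrals against the Haar measure $\mu$ via Lemma \ref{lem: int and cbd}. First, $\Ann(B)$ is a closed subgroup of $(B,\circ)$, hence Borel. Its index is $d$, so Lemma \ref{lem:closed} gives $\mu(\Ann(B)) = \frac1d$ and $\mu(B-\Ann(B)) = 1-\frac1d$. Similarly, each $\Cb_B(x)$ is a closed subgroup of $(B,\circ)$ (by Proposition \ref{pro:subgroup} and the closedness lemma), so $\mu(\Cb_B(x))$ makes sense. The function $x\mapsto \mu(\Cb_B(x))$ is measurable by Fubini--Tonelli, since $Z$ is closed.

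Next, split the integral over $\Ann(B)$ and its complement:
$$\Pb(B)=\int_{\Ann(B)}\mu(\Cb_B(x))\,d\mu(x)+\int_{B-\Ann(B)}\mu(\Cb_B(x))\,d\mu(x).$$
For $x\in\Ann(B)$ we have $\Cb_B(x)=B$, so the first term equals $\mu(\Ann(B))=\frac1d$. For every $x$ we have $\Ann(B)\subseteq \Cb_B(x)$, so $\mu(\Cb_B(x))\geqslant \frac1d$, and the second term is at least $(1-\frac1d)\frac1d$. Adding the two gives
$$\Pb(B)\geqslant \frac1d+\frac{d-1}{d^2}=\frac{2d-1}{d^2}.$$

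For $d=p$ prime, the goal is to show that $\Cb_B(x)=\Ann(B)$ for every $x\in B-\Ann(B)$; the lower bound then becomes an equality. Take such an $x$. The set $\Cb_B(x)$ is a subgroup of $(B,\circ)$ containing $\Ann(B)$, so $\Cb_B(x)/\Ann(B)$ is a subgroup of $(B,\circ)/\Ann(B)$, which has order $p$. By Proposition \ref{pro:subgroup}, $\Cb_B(x)$ is proper, so this subgroup is trivial and $\Cb_B(x)=\Ann(B)$. Hence $\mu(\Cb_B(x))=\frac1p$ on $B-\Ann(B)$, and the same computation gives $\Pb(B)=\frac1p+\frac{p-1}{p^2}=\frac{2p-1}{p^2}$.

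The main point needing care is the measure-theoretic bookkeeping rather than the algebra. Here $|B/\Ann(B)|=d$ should be read as the index of $\Ann(B)$ in $(B,\circ)$; it agrees with the index in $(B,+)$ by the remark after Lemma \ref{lem:closed}, because $\Ann(B)$ is an ideal. One must also check that $\Ann(B)$ and $B-\Ann(B)$ are measurable and that monotonicity of $\mu$ applies to the closed sets $\Cb_B(x)$. Closedness from the earlier lemma handles both.
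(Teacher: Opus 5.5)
Your proposal is correct and follows essentially the same route as the paper: split $\Pb(B)=\int_B\mu(\Cb_B(x))\,d\mu(x)$ over $\Ann(B)$ and its complement, bound below using $\Ann(B)\subseteq\Cb_B(x)$ and $\mu(\Ann(B))=\frac{1}{d}$ from Lemma \ref{lem:closed}, and for $d=p$ use Proposition \ref{pro:subgroup} to force $\Cb_B(x)=\Ann(B)$ for $x\notin\Ann(B)$. The only difference is cosmetic: the paper integrates separately over the $p-1$ nontrivial cosets of $\Ann(B)$, while you integrate over $B-\Ann(B)$ in one piece.
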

\begin{proof}
By a direct computation, we get
 \begin{align*}
    \Pb(B) &= \int_{B} \mu(\Cb_{B}(x)) \, d \mu(x) ~(\text{by Lemma }\ref{lem: int and cbd})\\
           &= \int_{\Ann(B)} \mu(\Cb_{B}(x)) \, d\mu(x) + \int_{B - \Ann(B)} \mu(\Cb_{B}(x)) \, d\mu(x)\\
           &\geqslant \int_{\Ann(B)}  \mu(\Cb_{B}(x)) \, d\mu(x) + \int_{B - \Ann(B)} \mu(\Ann(B)) \, d\mu(x)\\
           & = \mu(\Ann(B)) + \tfrac{1}{d}\mu(B - \Ann(B))) ~~(\text{by Lemma }\ref{lem:closed})\\
           & = \mu(\Ann(B)) + \tfrac{1}{d}\big(\mu(B) - \mu(\Ann(B))\big)\\
           &= \tfrac{d-1}{d}\mu(\Ann(B)) +\tfrac{1}{d} \mu(B)\\
           &=\frac{d-1}{d^2}+\frac{1}{d} ~~(\text{by Lemma }\ref{lem:closed})\\
           &=\frac{2d-1}{d^2}.   
\end{align*}

Now assume that $d = p$. So,  by Proposition \ref{pro:subgroup}, $\Ann(B)=\Cb_{B}(x)$ for all $x\in B-\Ann(B)$. Let $\{1, y_1, \ldots, y_{p-1}\}$ be a set of coset representatives of $\Ann(B)$ in $B$. Then
\begin{align*}
    \Pb(B)) &= \int_{B} \mu(\Cb_{B}(x)) \, d\mu(x) ~~(\text{ by Lemma }\ref{lem: int and cbd}.)\\
           &= \int_{\Ann(B)} \mu(\Cb_{B}(x)) \, d\mu(x) + \sum_{i=1}^{p-1}\big(\int_{ y_i+\Ann(B)} \mu(\Cb_{B}(x)) \, d\mu(x)\big) \\
           & = \mu(\Ann(B)) + \sum_{i=1}^{p-1} \big(\tfrac{1}{p}\mu(y_i + \Ann(B))\big)\\
           &= \mu(\Ann(B))+\tfrac{p-1}{p}\mu(\Ann(B))\\
           &=\frac{1}{p}+\frac{p-1}{p^2}=\frac{2p-1}{p^2}.           
\end{align*}
The proof is complete.
\end{proof}

We now compute some upper bounds.

\begin{thm}\label{lem: measure of centralizer}
Let $(B,+, \circ)$ be a compact   skew brace. Then the following hold true:
\begin{enumerate}
 \item  For all   $x \in B-\Ann(B)$,  $\mu(\Cb_{B}(x))\leqslant \frac{1}{2}$.
 \item  $\Pb(B)\leqslant \frac{3}{4}$.
 \item If $|B/\Ann(B)|=d$, then  $\Pb(B)\leqslant \frac{d+1}{2d}$.
 \item  In particular, if $|B/\Ann(B)|= d \ge 3$, then $\Pb(B) \leqslant \frac{5}{8}$.
 \item  If $B$ is not  FCI, then $\Pb(B)\leqslant \frac{1}{2}$.
\end{enumerate}
\end{thm}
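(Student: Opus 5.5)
Each part reduces to the integral formula of Lemma \ref{lem: int and cbd} together with the measure-of-subgroups estimates of Lemma \ref{lem:closed}.

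For (1), let $x \in B - \Ann(B)$. By Proposition \ref{pro:subgroup}, $\Cb_B(x)$ is a proper subgroup of $(B,\circ)$. It is closed, hence Borel, by the lemma on closed sets. Its index in $(B,\circ)$ is therefore at least $2$ (possibly infinite). Lemma \ref{lem:closed} then gives $\mu(\Cb_B(x)) \leqslant \frac12$.

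For (2) and (3), split the integral over $\Ann(B)$ and its complement. Bound $\mu(\Cb_B(x))$ by $1$ on $\Ann(B)$ and by $\frac12$ on $B-\Ann(B)$, using (1). This gives
\[
\Pb(B) \leqslant \mu(\Ann(B)) + \tfrac12\bigl(1-\mu(\Ann(B))\bigr) = \tfrac12 + \tfrac12\mu(\Ann(B)).
\]
If $B=\Ann(B)$, then $\Pb(B)=1$. The statement (2) is meant for the non-trivial case $B\ne \Ann(B)$, as in the abstract, so I assume that. Then Lemma \ref{lem:closed} gives $\mu(\Ann(B)) \leqslant \frac12$, hence $\Pb(B)\leqslant \frac34$. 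If $\Ann(B)$ has finite index $d$ in $(B,\circ)$, then $\mu(\Ann(B)) = \frac1d$ by Lemma \ref{lem:closed}, and the same bound becomes $\frac12+\frac1{2d}=\frac{d+1}{2d}$, proving (3). Here $|B/\Ann(B)|$ is read as the index in $(B,\circ)$. This agrees with the index in $(B,+)$ because $\Ann(B)$ is an ideal, so its cosets in the two groups coincide.

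For (4), I reuse the monotonicity of $f(x)=\frac{x+1}{2x}$ from the proof of Theorem \ref{thm: (5/8,1)}. For $d\geqslant 4$, (3) gives at most $\frac58$. For $d=3$, Proposition \ref{pro:infinite and index p} gives exactly $\frac59<\frac58$.

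For (5), suppose $B$ is not FCI. By the equivalence theorem above, either $\FCI(B)$ has infinite index in $(B,\circ)$ or it has finite index $m\geqslant 2$; in either case $\mu(\FCI(B))\leqslant \frac12$. For $x\notin \FCI(B)$, the centraliser $\Cb_B(x)$ has infinite index, so $\mu(\Cb_B(x))=0$ by Lemma \ref{lem:closed}. Hence $\Pb(B)=\int_{\FCI(B)}\mu(\Cb_B(x))\,d\mu(x)\leqslant \mu(\FCI(B))$. To use this I need $\FCI(B)$ to be a measurable subgroup. It is Borel by the proposition proved above, and a subgroup of $(B,\circ)$ by the earlier proposition, so Lemma \ref{lem:closed} applies and $\Pb(B)\leqslant \mu(\FCI(B))\leqslant\frac12$.

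The one delicate point, which I expect to be the main obstacle, is measurability. I need $x\mapsto\mu(\Cb_B(x))$ to be measurable and the set $\Ann(B)$ to be Borel. The first follows from Fubini–Tonelli applied to the closed set $Z$, exactly as in Lemma \ref{lem: int and cbd}. The second holds because $\Ann(B)$ is closed.
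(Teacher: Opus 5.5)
Your argument is correct. Parts (1)--(4) follow the paper's proof essentially line by line: you split the integral of Lemma \ref{lem: int and cbd} over $\Ann(B)$ and its complement, apply Lemma \ref{lem:closed}, and use the exact value $\frac59$ from Proposition \ref{pro:infinite and index p} when $d=3$. Your explicit restriction of (2) to $B\neq\Ann(B)$ states a hypothesis that the paper leaves implicit.

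Part (5) is where you take a genuinely different route. The paper keeps the inequality $\Pb(B)\leqslant\frac12+\frac12\mu(\Ann(B))$ from (2). It then observes that if $B$ is not FCI, then $\Ann(B)$, being contained in every $\Cb_B(x)$, has infinite index in $(B,\circ)$, so $\mu(\Ann(B))=0$. This uses only that $\Ann(B)$ is closed.

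You instead use that the integrand vanishes off $\FCI(B)$ and obtain $\Pb(B)\leqslant\mu(\FCI(B))$. This requires importing two heavier facts proved earlier: that $\FCI(B)$ is a subgroup of $(B,\circ)$ and that it is Borel. In return it gives a finer conclusion: $\Pb(B)\leqslant 1/m$ when $[(B,\circ):\FCI(B)]=m$ is finite, and $\Pb(B)=0$ otherwise. Your appeal to the equivalence theorem is unnecessary. Lemma \ref{lem:closed} with $n=2$ already gives $\mu(H)\leqslant\frac12$ for any proper Borel subgroup $H$, whether its index is finite or infinite.

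The two observations also combine. If $B$ is not FCI, then $\mu(\Ann(B))=0$, the integrand is at most $\frac12$ on $\FCI(B)-\Ann(B)$ by (1), and it vanishes off $\FCI(B)$. Hence in fact $\Pb(B)\leqslant\frac12\mu(\FCI(B))\leqslant\frac14$.
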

\begin{proof}
For $x\in B-\Ann(B)$ we know that $\Cb_{B}(x) \subsetneq B$. It now follows from  Proposition \ref{pro:subgroup} that  $\Cb_{B}(x)$ is   a proper subgroup of $(B,\circ)$. Since  $\Cb_{B}(x)$ is   closed, it admits a value under  the Haar measure $\mu$. Hence, by Lemma \ref{lem:closed}, we get   $$ \mu(\Cb_{B}(x))\leqslant \frac{1}{2}.$$  

For the second assertion, we compute
\begin{align*}
    \Pb(B) &= (\mu \times \mu)(Z)\\
           &= \int_{B} \mu(\Cb_{B}(x)) \, d\mu(x)~~(\text{by Lemma }\ref{lem: int and cbd})\\
           &= \int_{\Ann(B)} \mu(\Cb_{B}(x)) \, d\mu(x) + \int_{B - \Ann(B)} \mu(\Cb_{B}(x)) \, d\mu(x)\\
           & \leqslant \mu(\Ann(B)) + \tfrac{1}{2}\mu(B - \Ann(B)))~~(\text{by (1)})\\
           & = \mu(\Ann(B)) + \tfrac{1}{2}\big(\mu(B) - \mu(\Ann(B))\big)\\
           &= \tfrac{1}{2}\mu(\Ann(B)) + \tfrac{1}{2}\mu(B)\\
           &\leqslant \frac{1}{4}+\frac{1}{2} = \frac{3}{4},~~(\text{by Lemma }\ref{lem:closed}).    
\end{align*}
Putting $\mu(\Ann(B)) = \frac{1}{d}$, in the preceding computation, we readily get assertion (3). Assertion (4) holds on the lines of the proof of Theorem \ref{thm: (5/8,1)}, using Proposition \ref{pro:infinite and index p} for $d = 3$, and the last  assertion holds using the fact that $\mu(\Ann(B)) = 0$ (Lemma \ref{lem:closed}) in the above inequalities. The proof is now complete.
\end{proof}

 Recall that $F_n := \{y \in B \mid [(B,\circ) : \Cb_B(y)]\leqslant n\}$ is a Borel set for a compact skew brace $B$.  We consider a specific situation when $B$ a compact skew brace and $|B/\Ann(B)|=4$. Then $F_2 \subseteq F_4$ are Borel sets. Define $E_4 := F_4 - F_2$, which is the collections all elements $x$ of $B$  such that  $[(B,\circ) : \Cb_B(x)] = 4\}$. Note that $E_4 = F_4 \cap (B - F_2)$. Since both $F_4$ and $B-F_2$ are Borel sets, $E_4$ is Borel too, and therefore Haar measure $\mu$ is defined on $E_4$. We can now say little more on the number $\frac{5}{8}$ as a commuting probability.
   \begin{pro}
   Let $B$ be an infinite compact skew brace. Then $\Pb(B) = \frac{5}{8}$ if and only if $|B/\Ann(B)| = 4$ and $\mu(E_4) = 0$.
   \end{pro}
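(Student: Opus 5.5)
The plan is to compute $\Pb(B)$ exactly via Lemma \ref{lem: int and cbd} whenever $|B/\Ann(B)|=4$, obtaining the identity $\Pb(B)=\frac{5}{8}-\frac14\mu(E_4)$. Both directions of the statement then follow, once we show separately that $\Pb(B)=\frac58$ forces $d:=[B:\Ann(B)]=4$.

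\emph{Step 1: reduction to $d=4$.} Suppose $\Pb(B)=\frac58$ and split into cases on $d$.
\begin{itemize}
\item If $d$ is infinite, then $\mu(\Ann(B))=0$ by Lemma \ref{lem:closed}, since $\Ann(B)$ is a closed subgroup of infinite index. Rerunning the computation in the proof of Theorem \ref{lem: measure of centralizer}(2) with this value gives $\Pb(B)\leqslant \frac12<\frac58$.
\item If $d=1$, then $\Pb(B)=1$.
\item If $d=2$, then $\Pb(B)=\frac34$, and if $d=3$, then $\Pb(B)=\frac59$; both follow from Proposition \ref{pro:infinite and index p}.
\item If $d\geqslant 5$, then Theorem \ref{lem: measure of centralizer}(3) gives $\Pb(B)\leqslant\frac{d+1}{2d}<\frac58$, since $f(x)=\frac{x+1}{2x}$ is strictly decreasing.
\end{itemize}
Hence $d=4$.

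\emph{Step 2: the exact formula when $d=4$.} By Lemma \ref{lem:closed}, $\mu(\Ann(B))=\frac14$. Take $x\in B-\Ann(B)$. Then $\Cb_B(x)$ is a proper subgroup of $(B,\circ)$ by Proposition \ref{pro:subgroup}, and it contains $\Ann(B)$. Since $\Ann(B)$ is normal in $(B,\circ)$, the quotient $\Cb_B(x)/\Ann(B)$ is a proper subgroup of a group of order $4$. So $[(B,\circ):\Cb_B(x)]$ is $2$ or $4$, and Lemma \ref{lem:closed} gives $\mu(\Cb_B(x))=\frac12$ or $\frac14$ respectively. This yields a disjoint decomposition
$$B=\Ann(B)\sqcup (F_2-\Ann(B))\sqcup E_4 .$$
All three pieces are Borel: $\Ann(B)$ is closed, $F_2$ is closed by the proof that $\FCI(B)$ is Borel, and $E_4$ is Borel as remarked before the statement. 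On these pieces $x\mapsto\mu(\Cb_B(x))$ is constant, with values $1$, $\frac12$ and $\frac14$. Integrating via Lemma \ref{lem: int and cbd} gives
$$\Pb(B)=\tfrac14+\tfrac12\bigl(\tfrac34-\mu(E_4)\bigr)+\tfrac14\mu(E_4)=\tfrac58-\tfrac14\mu(E_4).$$

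\emph{Step 3: conclusion.} If $d=4$ and $\mu(E_4)=0$, the formula gives $\Pb(B)=\frac58$. Conversely, if $\Pb(B)=\frac58$, Step 1 gives $d=4$ and the formula then forces $\mu(E_4)=0$.

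The main obstacle is the measure-theoretic bookkeeping in Step 2. One needs the integrand to be genuinely measurable and constant on each piece, and one needs $\Ann(B)\subseteq F_2$, which holds since elements of $\Ann(B)$ have index $1$. The fact that the index is exactly $2$ or $4$ uses the normality of $\Ann(B)$ in $(B,\circ)$, which holds because $\Ann(B)$ is an ideal. A secondary check is the infinite-$d$ case in Step 1.
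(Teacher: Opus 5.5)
Your proposal is correct and follows essentially the same route as the paper. You first reduce to $d=4$ using Proposition~\ref{pro:infinite and index p} and Theorem~\ref{lem: measure of centralizer}, then split the integral over $\Ann(B)$, $B-\Ann(B)-E_4$ and $E_4$ to get $\Pb(B)=\frac58-\frac14\mu(E_4)$. Your explicit treatment of the case where $\Ann(B)$ has infinite index (so $\mu(\Ann(B))=0$ and $\Pb(B)\leqslant\frac12$) fills in a case the paper passes over silently.
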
 
   \begin{proof}
   Let $\Pb(B) = \frac{5}{8}$. Then, by Proposition \ref{pro:infinite and index p} and Theorem \ref{lem: measure of centralizer}, we obtain $|B/\Ann(B)|=4$. Thus $[(B,\circ) : \Cb_B(x)]$ is either 2 or 4. Now
   \begin{eqnarray*}
   \Pb(B)& =& \int_{\Ann(B)} \mu(\Cb_{B}(x)) \, d\mu(x) + \int_{B - \Ann(B) -E_4} \mu(\Cb_{B}(x)) \, d\mu(x) + \int_{E_4} \mu(\Cb_{B}(x)) \, d\mu(x)\\
   &=& \mu(\Ann(B) + \frac{1}{2}\big(\mu(B) - \mu(\Ann(B)  - \mu(E_4)\big) + \frac{1}{4} \mu(E_4)\\
   & =&  \frac{1}{2} + \frac{1}{2}\big(\mu(\Ann(B))  - \frac{1}{4}\mu(E_4)\big) = \frac{5}{8} - \frac{1}{4}\mu(E_4)).
   \end{eqnarray*}
  This gives $\mu(E_4) = 0$. The converse follows from the preceding equations using the given hypotheses. 
   \end{proof}

We finally get the skew brace analogue of the main result of \cite{WHG73}.  
\begin{thm}
    Let $(B,+, \circ)$ be a compact   skew brace. Then
    $\Pb(B) > \frac{5}{8}$ if and only if  $|B/\Ann(B)| \leqslant 2$. Moreover, $|B/\Ann(B)| = 2$ if and only if  $\Pb(B) = \frac{3}{4}$.
\end{thm}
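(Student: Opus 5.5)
Set $d := [B : \Ann(B)]$, which may be infinite. The plan is to split into cases according to $d$, using Proposition \ref{pro:infinite and index p}, Theorem \ref{lem: measure of centralizer} and Lemma \ref{lem:closed}. Since $\Ann(B)$ is an ideal, its indices in $(B,+)$ and $(B,\circ)$ agree. Because $\Ann(B)$ is closed, Lemma \ref{lem:closed} gives $\mu(\Ann(B)) = 1/d$ when $d$ is finite, and $\mu(\Ann(B)) = 0$ when $d$ is infinite.

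\textbf{The cases $d \le 2$.} If $d=1$, then $Z = B\times B$ and $\Pb(B)=1 > \frac58$. If $d=2$, Proposition \ref{pro:infinite and index p} with $p=2$ gives $\Pb(B)=\frac{3}{4} > \frac58$. This proves the ``if'' direction of the first assertion and the forward direction of the second.

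\textbf{The cases $d \geq 3$.} For finite $d \ge 3$, Theorem \ref{lem: measure of centralizer}(4) gives $\Pb(B) \le \frac58$ directly. For infinite $d$, I rerun the computation in the proof of Theorem \ref{lem: measure of centralizer}(2) with $\mu(\Ann(B)) = 0$:
$$\Pb(B) \le \mu(\Ann(B)) + \tfrac12\bigl(1-\mu(\Ann(B))\bigr) = \tfrac12 \le \tfrac58.$$
This handles the case where $B$ is FCI but $\Ann(B)$ has infinite index, which is not covered by Theorem \ref{lem: measure of centralizer}(5). Taking the contrapositive, $\Pb(B) > \frac58$ forces $d \le 2$.

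\textbf{The converse of the second assertion.} Suppose $\Pb(B)=\frac34$. Then $\Pb(B) > \frac58$, so $d \le 2$ by the above. Since $d=1$ gives $\Pb(B)=1 \ne \frac34$, we must have $d=2$.

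The only delicate point is the infinite-index case. There one must confirm that $\Ann(B)$ is a closed (hence Borel) subgroup of $(B,\circ)$, so that Lemma \ref{lem:closed} yields $\mu(\Ann(B))=0$, and that $\mu(\Cb_B(x)) \le \frac12$ for $x \notin \Ann(B)$ still holds by Theorem \ref{lem: measure of centralizer}(1). Both facts are already established earlier in the paper, so I do not expect a genuine obstacle.
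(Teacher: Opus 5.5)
Your proof is correct and follows the paper's own argument. You use the same case split on $d=[B:\Ann(B)]$: Proposition \ref{pro:infinite and index p} handles $d\le 2$, and Theorem \ref{lem: measure of centralizer}(4) handles finite $d\ge 3$. Your separate treatment of infinite index via $\mu(\Ann(B))=0$, which gives $\Pb(B)\le \frac12$, makes explicit a case the paper's proof passes over: $B$ FCI but $[B:\Ann(B)]=\infty$, which is covered neither by (4), stated for finite $d$, nor by (5), stated only for non-FCI $B$.
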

\begin{proof}
    If $B/\Ann(B)=1$, that is, $B$ is a trivial brace, then $\Pb(B)=1$. So assume that $|B/\Ann(B)| = 2$. Then it follows from   Proposition \ref{pro:infinite and index p} that  $\Pb(B) = \frac{3}{4}$.    Conversely, assume that $\Pb(B) > \frac{5}{8}$. If $\Pb(B) = 1$, then $B=\Ann(B)$. Assume that $B$ is not a trivial brace. Then $|B/\Ann(B)| \geqslant 2$. If $|B/\Ann(B)| = 2$, then, by Proposition \ref{pro:infinite and index p} we get $\Pb(B) = \frac{3}{4}$. So we assume that $|B/\Ann(B)| = d \geqslant 3$. It now follows from  Theorem \ref{lem: measure of centralizer} (4)  that  $\Pb(B) \leqslant \frac{5}{8}$, which proves  the first assertion. The second one follows by using Theorem \ref{lem: measure of centralizer} (3).
 \end{proof}

  We conclude with an explicit example of an infinite skew brace $B$ with $\Pb(B)=\frac{3}{4}$. We first observe that 
 for two  compact skew braces $(B_1,+, \circ)$ and $(B_2,+, \circ)$, it follows that $\Pb(B_1 \times B_2) = \Pb(B_1)\Pb(B_2)$. 
Let $B_1=(\mathbb{R}/\mathbb{Z},+,+)$ be the trivial brace, where $+$ is the natural addition on real numbers  mod $\mathbb{Z}$, and $B_2$ be the skew brace which we defined in Example \ref{3/4}. Then $\Pb(B_1)=1$, $\Pb(B_2)=3/4$, and therefore  $\Pb(B_1\times B_2)=3/4$.\\

\noindent {\bf Acknowledgement:} The second-named author acknowledges the partial support of DST-SERB Grant MTR/2021/000285.


\begin{thebibliography}{100} 

\bibitem{AMV17}
Y.~ Antolin, A.  Martino, and  E. Ventura,  \emph{Degree of commutativity of infinite groups},  Proc. Amer.  Math. Soc. {\bf  145} (2017), 479-485.



\bibitem{BEP24}
A. Ballester-Bolinches, R. Esteban-Romero and V. Perez-Calabuig, \emph{A Jordan-Hölder theorem for skew left braces and their applications to multipermutation solutions of the Yang-Baxter equation}, Proc. Roy. Soc. Edinburgh Sect. A 1{\bf 54} (2024), 793-809.

\bibitem{BEJP24}
A. Ballester-Bolinches, R. Esteban-Romero, P. Jiménez-Seral and V. Pérez-Calabuig,  \emph{Soluble skew left braces and soluble solutions of the Yang–Baxter solution},  Adv. Math. {\bf 455} (2024), Paper No. 109880, 27 pp.


\bibitem{BNY22}
V. G.  Bardakov, M.  V. Neshchadim and Manoj K. Yadav,  \emph{On  $\lambda$-homomorphic skew braces}, J. Pure 	Appl. Algebra {\bf 226} (2022), Paper No. 106961, 37 pp.

\bibitem{BNY23}
V. G.  Bardakov, M.  V. Neshchadim and Manoj K. Yadav,
\emph{Symmetric skew braces and brace systems},
Forum Math. {\bf 35} (2023), 713-738.


\bibitem{BJ23}
M. Bonatto and P. Jedlicka, \emph{Central nilpotency of skew braces}, J Algebra Appl. {\bf 22} (2023), 2350255 (16 pages).

\bibitem{CCD20}
E.~Campedel, A.~ Caranti and I.~Del Corso,  \emph{Hopf-Galois structures on extensions of degree $p^2q$ and skew braces of order $p^2q$: the cyclic Sylow p-subgroup case}, J. Algebra {\bf 556} (2020), 1165-1210.

\bibitem{AC20}
 A. Caranti, \emph{Bi-skew braces and regular subgroups of the holomorph}, J. Algebra {\bf 562} (2020), 647- 665. 
 
 
 

\bibitem{CSV19} 
F. Cedo, A.  Smoktunowicz and L.  Vendramin, {\it Skew left braces of nilpotent type}, Proc. London Math. Soc., {\bf 118} (2019), 1367-1392.

\bibitem{LC19}
L. N. Childs,  \emph{Bi-skew braces and Hopf Galois structures}, New York J. Math. {\bf 25} (2019), 574-588.

\bibitem{CFT25}
I.~Colazzo, M.~ Ferrara and M.~Trombetti, \emph{On derived-indecomposable solutions of the Yang-Baxter equation},  Publ. Mat. {\bf 69} (2025), 171-193.

\bibitem{Drinfeld92}
V. G. Drinfeld, \emph{On some unsolved problems in quantum group theory}, Lecture Notes in Math., {\bf 1510}
Springer-Verlag, Berlin, 1992, 1-8.

\bibitem{ET68}
P.~Erdos and P. ~Turan,  \emph {On some problems of a statistical group-theory. {IV}}, Acta Math. Acad. Sci. Hungar. {\bf 19} (1968), 413-435.

\bibitem{EKG07}
A.~Erfanian, R.~Kamyabi-Gol, \emph{On the mutually commuting {$n$}-tuples in compact groups}, Int. J. Algebra {\bf 1} (2007), 251-262.
    

 \bibitem{PG70}
P.~X.~Gallagher, \emph{ The number of conjugacy classes in a finite group}, Math. Z. {\bf 118} (1970), 175-179.
 
 \bibitem{GAP}
The GAP Group, \emph{Groups Algorithms and Programming,}  version 4.15.1 (2025), available at  http://www.gap-system.org.

\bibitem{GV17}
 L. Guarnieri  and  L. Vendramin, \emph{ Skew braces and the Yang-Baxter equation},  Math. Comp. {\bf 86} (2017), 2519-2534.



\bibitem{WHG73}
W. H. Gustafson,  \emph{What is the probability that two group elements commute?}, The American Mathematical Monthly {\bf 80} (1973),1031-1034.
	
\bibitem{Hall40}	
P. Hall, \emph{The classification of prime power groups}, J. Reine Angew. Math. {\bf 182}  (1940) 130-141.	

\bibitem{PRH50}
P.~R.~Halmos, Measure theory, Van Nostrand, Princeton, N. J. 1950.

\bibitem{ES63}
E.~Hewitt and K.~A.~ Ross, \emph{Abstract harmonic analysis. Vol. I: Structure of topological groups. Integration theory, group representations}, Die Grundlehren der mathematischen Wissenschaften, Band 115
Springer-Verlag, Berlin-Göttingen-Heidelberg; Academic Press, Inc., Publishers, New York, 1963, viii+519 pp.

\bibitem{HM23}
K.~H.~ Hofmann and S.~A.~ Morris, \emph{The structure of compact groups—a primer for the student—a handbook for the expert}, De Gruyter Stud. Math., 25, De Gruyter, Berlin, 2023, xli+1032 pp.

\bibitem{HR12} 
K.~H.~ Hofmann and F.~G.~ Russo, \emph{The probability that  x  and  y  commute in a compact group}, Math. Proc. Cambridge Philos. Soc. {\bf 153} (2012), no. 3, 557-571.

	

\bibitem{JAV23}
E. Jespers,  A.  Van Antwerpen and L.  Vendramin,  \emph{Nilpotency of skew braces and multipermutation solutions of the Yang-Baxter equation}, Commun. Contemp. Math. {\bf 25} (2023), no. 9, Paper No. 2250064, 20 pp.


\bibitem{PL95}
P.~Lescot, \emph{Isoclinism classes and commutativity degrees of finite groups}, J. Algebra {\bf 177} (1995), 847-869.

\bibitem{LV23}
T. Letourmy and L. Vendramin, \emph{Isoclinism of skew braces}, Bull. Lond. Math. Soc. {\bf 55} (2023), no. 6, 2891-2906.

\bibitem{SM25}
S.~Mukherjee,  \emph{Classification of right nilpotent $\mathbb{F}_p$-braces of cardinality $p^5$},  J. Algebra {\bf 665} (2025), 503-537.

 


\bibitem{DP24}
D.~Puljic, \emph{Classification of braces of cardinality $p^4$}, J. Algebra {\bf 660} (2024), 1-33. 

\bibitem{WR87}
W.~Rudin,  \emph{Real and complex analysis},  McGraw-Hill Book Co., New York, 1987, xiv+416 pp.

\bibitem{Rump07}
W. Rump, \emph{Braces, radical rings, and the quantum Yang-Baxter equation},  J. Algebra {\bf 307} (2007), no. 1, 153-170.

\bibitem{WR07}
W.~Rump, \emph{Classification of cyclic braces}, J. Pure Appl. Algebra {\bf 209} (2007), 671-685.


\bibitem{SV18}
A.~Smoktunowicz, L.~Vendramin, \emph{On skew braces (with an appendix by N. Byott and L. Vendramin)}, J. Comb. Algebra {\bf 2} (2018), 47-86.



\bibitem{MCHT20}
M.~ C.~ H.~ Tointon, \emph{Commuting probabilities of  infinite groups},  J. London Math. Soc. (2) {\bf 101} (2020), 1280-1297. 


\bibitem{VK22}
L. Vendramin and  O. Konovalov, {\it Yang-Baxter}, version 0.10.7  (2025), available at https://www.gap-system.org/Packages/yangbaxter.html.



\end{thebibliography}
\end{document}